\documentclass[12pt,epsfig,amsfonts]{amsart} 
\setcounter{tocdepth}{2}
\usepackage{amsmath,amsthm,amssymb,amscd,epsfig,color}
\usepackage{graphicx}
\usepackage{mathrsfs}
\usepackage{overpic}
\usepackage{ulem}
\newcommand\cA{\bar {A}}

\setlength{\topmargin}{0.13in} \setlength{\textheight}{8.85in}
\setlength{\textwidth}{5.8in} \setlength{\oddsidemargin}{0.2in}
\setlength{\evensidemargin}{0.2in}

\setlength{\unitlength}{1cm}


\newtheorem*{theorema}{Theorem A}

\newtheorem*{eq:abnormallowerbound}{Lemma B.2}
\newtheorem*{theoremb}{Theorem B}

\newtheorem{prop}{Proposition}[section]
\newtheorem{lemma}[prop]{Lemma}

\newtheorem{sublemma}[prop]{Sublemma}

\newtheorem{cor}[prop]{Corollary}

\theoremstyle{remark}

\newtheorem{remark}[prop]{Remark}

\numberwithin{equation}{section}

\begin{document}

\author{Hiroki Takahasi and Masato Tsujii}

\address{Keio Institute of Pure and Applied Sciences (KiPAS), Department of Mathematics,
Keio University, Yokohama,
223-8522, JAPAN} 
\email{hiroki@math.keio.ac.jp}

\address{Department of Mathematics, Kyushu University, Fukuoka, 819-0395, JAPAN} 
\email{tsujii@math.kyushu-u.ac.jp}

\subjclass[2010]{37A50, 37D20, 37D25, 37D35, 37E05, 37E20}
\thanks{{\it Keywords}: Large Deviation Principle; $S$-unimodal map; renormalization}

\title[Existence of Large deviation rate function]{
Existence of Large deviations rate function\\ for any $S$-unimodal map}

 \dedicatory{Dedicated to Professor Emeritus Hiroshi Kokubu\\ on the occasion of his retirement by age 65}

   \begin{abstract}
For an arbitrary negative Schwarzian unimodal map with a non-flat critical point,
    we establish the level-2 Large Deviation Principle for empirical distributions.
We also give an example of a bimodal map for which the level-2 Large Deviation Principle does not hold. 
   \end{abstract}
   \maketitle

   \section{Introduction}

A main objective of the ergodic theory of smooth dynamical systems is to understand long-term behavior of typical orbits for a majority of systems. 
   Much effort has been dedicated to 
   constructing physically relevant invariant measures which statistically predict
   typical asymptotic behaviors. 
      A refined description of the dynamics requires the analysis of
    atypical, or transient behaviors before orbits settle down 
   to equilibrium. The theory of large deviations 
    is concerned with such rare events.
             The Large Deviation Principle (LDP) asserts the existence of the rate function
      which controls probabilities of rare events on exponential scale.

      It is now classical that a transitive uniformly hyperbolic (Axiom A) attractor supports a unique
            Sina{\u\i}-Ruelle-Bowen measure \cite{Bow75,Sin72}, and Lebesgue almost every orbit in the basin of attraction is asymptotically 
            distributed with respect to this measure.
            The LDP for Axiom A attractors was established by Kifer \cite{Kif90}, Orey $\&$ Pelikan \cite{OrePel89} and Takahashi \cite{Tak84}. 
            For one-dimensional non-hyperbolic systems, 
        after several progresses \cite{Chu11,PolSha09,PolShaYur}, a major advance was made in
            \cite{CRT} which establishes the LDP
             for an arbitrary $C^{1+\alpha}$ multimodal map with non-flat critical points
 that is topologically exact.
The aim of this paper is to treat
what is left off in \cite{CRT}:
 the LDP for renormalizable unimodal maps, including infinitely renormalizable ones.
 The conclusion is that the LDP holds for an arbitrary $S$-unimodal map
 with a non-flat critical point.

We introduce our setting and results in more precise terms.
Let $X\subset\mathbb R$ be a compact non-degenerate interval. 
   A $C^1$ map $f\colon X\to X$ is called {\it unimodal} 
          if it has a unique critical point $c$, which is contained in ${\rm int}(X)$ and is an extremum, and satisfies $f(\partial X)\subset\partial X$.
  An {\it $S$-unimodal map} $f$ is a unimodal map of class $C^3$ on $X\setminus\{c\}$ with 
    negative Schwarzian derivative $f'''/f'-(3/2)(f''/f')^2<0$
  such that if $x\in\partial X$ is a fixed point of $f$ then $|f'(x)|>1$.
We say the critical point $c$ is {\it non-flat} if there exist $l>1$ and 
$C^3$ diffeomorphisms $\varphi$ and $\psi$ defined on a neighborhood of $c$ and $f(c)$ 
respectively such that $\varphi(c)=0=\psi(f(c))$ and $|\varphi(x)|^l=|\psi(f(x))|$ for any $x$ near $c$.

Let $\mathcal M$ denote the space of Borel probability measures on $X$ endowed with the weak* topology. 
The empirical measure at time $n$ with initial point $x\in X$ is
the uniform probability distribution on the orbit $\{x,f(x),\ldots,f^{n-1}(x)\}$, denoted by
\[
\delta_x^n=\frac{1}{n}\sum_{k=0}^{n-1}\delta_{f^k(x)}\in \mathcal M,
\]
where $\delta_{f^k(x)}$ denotes the unit point mass at $f^k(x)$. 
For a Borel set $A\subset X$, we write $|A|$ for its Lebesgue measure. 
 Our main result is stated as follows.
\begin{theorema}[level-2 LDP]
For an arbitrary $S$-unimodal map $f\colon X\to X$ with a non-flat critical point,
the level-2 LDP holds, namely, there exists a convex lower 
semicontinuous function $I\colon\mathcal M\to[0,\infty]$ such that 

\smallskip
\noindent $(\text{\rm lower bound})\  \ \ \
\displaystyle{\liminf_{n\to\infty}\frac{1}{n}\log\left|\{x\in X\colon\delta_x^n\in\mathcal G\}\right|\geq-\inf_{\mathcal G} I}
$\smallskip

\noindent for any open set $\mathcal G\subset\mathcal M$, and
 \smallskip
 
\noindent$(\text{\rm upper bound})\  \ \ \
\displaystyle{\limsup_{n\to\infty}\frac{1}{n}\log\left|\{x\in X\colon\delta_x^n\in\mathcal C\}\right|\leq-\inf_{\mathcal C} I}
$\smallskip

\noindent for any closed set $\mathcal C\subset\mathcal M$.
\end{theorema}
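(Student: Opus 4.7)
The plan is to leverage the renormalization structure of $S$-unimodal maps to reduce Theorem A to the topologically exact case treated in \cite{CRT}. Any $S$-unimodal map $f$ is either non-renormalizable or admits a (finite or infinite) sequence of renormalizations of periods $p_1<p_2<\cdots$ and nested intervals $J_1\supset J_2\supset\cdots$ containing the critical point $c$, with $g_k:=f^{p_k}|_{J_k}$ again $S$-unimodal with non-flat critical point. When $f$ is non-renormalizable and has no attracting cycle, it is topologically exact on the dynamical core $[f^2c,fc]$ and \cite{CRT} supplies the LDP directly; when it has an attracting cycle, Lebesgue-a.e.\ orbit is trapped in the basin and an elementary argument gives the LDP with an atomic rate function. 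The new content is confined to the renormalizable case, and the main difficulty is the infinitely renormalizable one.

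The first substantial step I would carry out is an inductive reduction from the LDP for $f$ to the LDP for $g_k$ at the next renormalization level. Set $\mathcal J_k=\bigcup_{j=0}^{p_k-1}f^jJ_k$. On the complement $X\setminus\mathcal J_k$, the set of orbits that never enter $\mathcal J_k$ forms a hyperbolic Cantor repeller (via negative Schwarzian and absence of neutral cycles off $\mathcal J_k$), so entrance times are exponentially tight and the transient part contributes a CRT-type LDP. Inside $\mathcal J_k$, $f$-empirical distributions correspond to $g_k$-empirical distributions on $J_k$ via cyclic symmetrization. Gluing these two regimes by a concatenation/convex-duality argument then yields the LDP for $f$ from the LDP for $g_k$.

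If $f$ is finitely renormalizable with deepest level $K$, the induction terminates at $g_K$, which is non-renormalizable and covered by the above. If $f$ is infinitely renormalizable, the rate function must be obtained as a suitable limit $I=\lim_k I^{(k)}$ of the level-$k$ rate functions constructed by applying the inductive step $k$ times and using the trivial bound $I_{k+1}\geq 0$ to close off. The solenoidal attractor $\mathcal S=\bigcap_k\mathcal J_k$ is minimal and uniquely ergodic with invariant measure $\mu_\infty$, Lebesgue-a.e.\ orbit is attracted to $\mathcal S$, and consistency of the candidate LDP forces $I(\mu_\infty)=0$ and $I(\mu)=+\infty$ for every other measure supported on $\mathcal S$.

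The hard part will be the infinitely renormalizable case, and specifically the uniform-in-$k$ control needed in the passage to the limit. Quantitative real bounds on the geometry of the renormalization intervals $J_k$ (in the spirit of Sullivan and Martens--de~Melo) are needed to estimate how fast Lebesgue mass is captured by successive renormalization cycles; those estimates must be sharp enough that $I^{(k)}(\mu)\to 0$ only along weak*-sequences approaching $\mu_\infty$, or else the limit rate function could fail lower semi-continuity at $\mu_\infty$ and the upper bound would break there. A secondary difficulty is verifying that the limit rate function coincides with the expected Legendre transform of topological pressure, so that convexity and the link to the Level-1 LDP (Theorem B) come for free.
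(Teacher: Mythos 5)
Your proposal correctly identifies the three cases and rightly locates the new content in the infinitely renormalizable case, and the finitely renormalizable reduction to \cite{CRT} for the deepest cycle is close to what the paper does in Section~5. However, for the infinitely renormalizable case the proposed strategy diverges substantially from the paper's, and it contains a genuine gap in the passage to the limit.

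The paper does \emph{not} prove the LDP for $f$ by an inductive transfer from the LDP for the renormalized maps $g_k=f^{p_k}|_{J_k}$ followed by a limit $I=\lim_k I^{(k)}$. Instead it treats all renormalization levels of the single map $f$ simultaneously. For the lower bound, Proposition~\ref{low} produces, for any $f$-invariant $\mu$ and any finitely many test functions, a family of $e^{(h(\mu)-\epsilon)n}$ intervals of length $\geq e^{-(\chi(\mu)+\epsilon)n}$ inside $A_n(\vec\phi;\vec\alpha)$; the key step is to \emph{glue} orbit segments traversing finitely many cycles $\Gamma_{m_1},\ldots,\Gamma_{m_s}$ and then entering a deep cycle $K_{m_*}$ where $\mu_\infty$ controls the Birkhoff average. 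For the upper bound, Section~4 uses a coarse-graining decomposition of $\cA_n(\vec\phi;\vec\alpha)$ indexed by the times and depths of transitions between cycles and by $\epsilon$-rounded Birkhoff averages on each stretch (the clusters $Q_n(\mathbf n;\mathbf A)$), and bounds each cluster's measure via conditional-probability estimates on the hyperbolic pieces $\Lambda_m$ (Lemma~\ref{hyperbolic}, proved with the variational principle rather than with \cite{CRT}). The rate function is written down explicitly as the minus of the upper semi-continuous regularization of $F=h-\chi$, not obtained as a limit.

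The gap in your plan is the step ``$I=\lim_k I^{(k)}$ and take the limit in the LDP bounds.'' Even granting the inductive single-step transfer (which itself requires the gluing/concatenation work that is the actual technical content of Sections~3--4), LDPs are not stable under this kind of limit: for the upper bound you would need $\lim_k\inf_{\mathcal F}I^{(k)}=\inf_{\mathcal F}I$ uniformly over closed $\mathcal F$, which does not follow from pointwise monotone convergence of $I^{(k)}$, and for the lower bound $\inf$ and $\lim$ do not commute either. You acknowledge this (``those estimates must be sharp enough that $I^{(k)}(\mu)\to0$ only along weak*-sequences approaching $\mu_\infty$''), but you do not say what would supply that control; invoking Sullivan/Martens--de~Melo real \emph{a priori} bounds also goes beyond what the argument actually requires — the paper only uses Koebe distortion from negative Schwarzian and Ma\~n\'e's theorem, and in particular needs no bounded geometry of the renormalization intervals. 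A secondary inaccuracy: on $\Lambda=\bigcap_k\mathcal J_k$ there are no $f$-invariant measures other than $\mu_\infty$ (unique ergodicity), so the clause ``$I(\mu)=+\infty$ for every other measure supported on $\mathcal S$'' is vacuous and does not constrain the construction. In short, the right shape of the result is there, but the mechanism you propose for the infinitely renormalizable case — inducting on renormalization level and taking a limit of rate functions — would not close, and the paper replaces it by a direct, single-pass coarse-graining argument.
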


 Hereafter we follow the convention 
   $\sup\emptyset=-\infty$, $\inf\emptyset=\infty$, $\log0=-\infty$.
The function $I$ is called {\it the rate function}.
Since $\mathcal M$ is a metrizable space, the LDP determines the rate function uniquely \cite[Theorem~2.13]{RasSep15}.

 In order to explain the meaning of Theorem~A, let us recall\footnote{Guckenheimer \cite{Guc79} proved this classification for negative Schwarzian $C^3$ unimodal maps with non-degenerate critical points. The same holds in our slightly more general setting.
 For details, see  \cite[Chapter~III, $\S$4]{dMevSt93}.}
 that an  
  $S$-unimodal map $f$ with a non-flat critical point is classified 
into the following mutually exclusive cases \cite{dMevSt93,Guc79}:
\begin{itemize}
\item[(I)] $f$ has an attracting periodic orbit.
\item[(II)] $f$ is infinitely renormalizable.
\item[(III)] $f$ is at most finitely renormalizable and has no attracting periodic orbit. 
\end{itemize}
The dynamics is relatively simple in cases (I) or (II): the empirical measure along
the orbit of Lebesgue almost every initial point converges in the weak* topology to
the measure supported on
the attracting periodic orbit or the attracting Cantor set. 
The dynamics in case (III) is much more complicated and displays
a rich array of different statistical behaviors (see e.g., \cite{BKNS96,HofKel90,Jak81,Joh87}).
As a prototypical example,
consider the quadratic map $x\in[0,1]\mapsto a x(1-x)$ with $1< a\leq 4$. The following  are well-known:
  \begin{itemize}
  \item the set of $a$-values corresponding to case (I) is open and dense in the parameter space \cite{GraSwi97};
 \item the set of $a$-values corresponding to case (II) is non-empty \cite{Fei78,Fei79}, and has zero Lebesgue measure \cite{Lyu02};
 \item the set of $a$-values corresponding to case (III) has positive Lebesgue measure 
 \cite{BenCar85,Jak81}. 
  \end{itemize} 
Irrespective of rich bifurcations, Theorem~A states that the LDP continues to hold for an arbitrary parameter $a$.

The rate function $I$ in Theorem~A is given as follows.  
Let $\mathcal M(f)$ denote the set of elements of $\mathcal M$ which are $f$-invariant. 
 For each $\mu\in\mathcal M(f)$, the limit
\[
\chi(x)=
\lim_{n\to\infty}\frac{1}{n}\log|(f^n)'(x)|
\]
exists for $\mu$-almost every $x\in X$ and 
belongs to $\mathbb{R}\cup \{-\infty\}$, because $\log |f'|$ is uniformly bounded from above. We set 
\[\chi^+(x)=\max\{\chi(x),0\}\in \mathbb{R},\]
and define a (non-negative) {\it Lyapunov exponent} of $\mu$ by
\begin{equation}\label{d-lyap}
\chi^+(\mu)=\int \chi^+(x) d\mu(x). 
\end{equation}

Let $h(\mu)$ denote the measure-theoretic entropy of $\mu$ with respect to $f$. Define a {\it free energy} $F\colon \mathcal M\to[-\infty,0]$ by
\begin{equation}\label{eq:F}
F(\mu)=\begin{cases}h(\mu)-\chi^+(\mu)\ &\text{if 
      $  \mu\in\mathcal M(f)$,}\\
      -\infty\ &\text{otherwise.}\end{cases}
\end{equation}
The rate function $I$ in Theorem~A is defined to be the minus of the upper semicontinuous regularization of $F$: 
\[I(\mu)=-\inf_{\mathcal G\ni\mu}\sup_{\mathcal G}F(\nu).\]
Here, the infimum is taken over all open sets $\mathcal G$ in $\mathcal M$ containing $\mu$.
Note that the entropy and the Lyapunov exponent are upper semicontinuous as functions of measures.
 In cases (I) and (II), the Lyapunov exponent is in fact continuous (see 
 Lemma~\ref{rate}) and thus
$I=-F.$
In case (III), 
 the Lyapunov exponent may fail to be lower semicontinuous \cite[$\S$2]{BruKel98}, which implies
 $I\neq -F$.

The next theorem asserts that Theorem~A cannot be extended to maps with multiple critical points.

\begin{theoremb}[The breakdown of the LDP]
There exists a $C^3$ interval map with exactly two non-degenerate critical points for which the level-2 LDP does not hold.
\end{theoremb}

The rest of this paper consists of four sections.   
In $\S$2 we introduce several basic definitions and results on $S$-unimodal maps. 
In $\S$3 we prove the lower bound for open sets in Theorem~A. 
In $\S$4 we prove the upper bound for closed sets in Theorem~A. 
In $\S$5
we prove Theorem~B.

For only finitely renormalizable maps, the dynamics of typical orbits 
consist of two stages: transition to the deepest  
renormalization cycle,
and circulation within that cycle.  
The LDP restricted to the deepest renormalization cycle mostly 
follows from the known result \cite{CRT}.
For infinitely renormalizable maps, orbits contained in sufficiently deep cycles are 
approximated by the post-critical measure supported on the attracting Cantor set.
Therefore, all we have to do is to analyze
the transitions between renormalization cycles.
The main point of $\S$3 is to prove that any $f$-invariant measure supported on a hyperbolic set 
is approximated with orbit segments 
from finitely many subintervals of $X$ (see Proposition~\ref{low}).
This claim is standard if the measure is ergodic, and if non-ergodic then we glue orbit segments together which
approximate its ergodic components.

  The main technique used in $\S$4 to derive the upper bound is a `coarse graining approach'.
Here the set of orbits with prescribed time averages of continuous functions are coarse grained
(see $\S$\ref{coarse}), and estimates on the resultant `clusters'
are transferred to the large deviations upper bound on empirical measures (see Propositions~\ref{up} and \ref{over-lem}).
The estimate on each cluster consists of contributions from the uniformly hyperbolic dynamics
on each renormalization cycle and the transitions between them.

A counterexample of a bimodal map we construct in the proof of Theorem~B is non-transitive and 
has two non-degenerate critical points, one of which is non-recurrent.
With a slight modification of our construction, one can find
a counterexample in polynomial maps of degree $3$,
see Remark~\ref{remark-final}.
It is plausible that there is an $S$-unimodal map with a non-recurrent flat critical point for which the level-2 LDP fails. For a transitive $S$-unimodal map  with a non-recurrent flat critical point and all periodic orbits hyperbolic repelling, the level-2 LDP was shown in
 \cite{ChuTak} under an assumption that the criticality increases at some specific rates.

\section{Preliminaries}
This section introduces several basic definitions and results on the dynamics of 
$S$-unimodal maps and its renormalization.
In $\S$\ref{attractor} we classify attracting periodic orbits. In $\S$\ref{affine-sec} we prove a lemma on the Lyapunov exponent of invariant measures.
In $\S$\ref{metric} 
we define renormalization 
of $S$-unimodal maps. 
In $\S$\ref{g-str} we introduce basic structures associated with the renormalization. In
$\S$\ref{symbolic-dynamics}, $\S$\ref{dist-est},  $\S$\ref{top-section} we are concerned with the dynamics on each renormalization cycle.
In $\S$\ref{notation-sec} we introduce some notation which will be frequently used later.
In $\S$\ref{analysis} we summarize a few results on infinitely renormalizable maps.

\subsection{Classification of attracting periodic orbits}\label{attractor}
Let $f\colon X\to X$ be a unimodal map, and let
$\{f^k(x)\}_{k=0}^{p-1}$
be a periodic orbit of $f$ with prime period $p$.
 We say
 $\{f^k(x)\}_{k=0}^{p-1}$ is:

 \begin{itemize}
\item {\it hyperbolic attracting} if $|(f^p)'(x)|<1$;
 
\item {\it neutral} if $|(f^p)'(x)|=1$; 
 
\item {\it hyperbolic repelling} if $|(f^p)'(x)|>1$.
 \end{itemize}
 The {\it basin} of the periodic orbit $\{f^k(x)\}_{k=0}^{p-1}$ is the set of points in $X$ whose omega-limit set is contained in the set $\{f^k(x)\}_{k=0}^{p-1}$.
We say $\{f^k(x)\}_{k=0}^{p-1}$ is 
{\it attracting} if its basin contains an open set. In this case, the union of the
 connected components of the basin containing a point from $\{f^k(x)\}_{k=0}^{p-1}$
 is called the {\it immediate basin}
 of $\{f^k(x)\}_{k=0}^{p-1}$.
 Each connected component of the immediate basin
 contains exactly one point from $\{f^k(x)\}_{k=0}^{p-1}$.

 If  $f\colon X\to X$ is $S$-unimodal, a neutral periodic point  $x$ with prime period $p$ satisfies 
 either
 \begin{itemize}
     \item[(a)] $(f^p)''(x)\neq 0$, or 
     \item[(b)] $(f^p)''(x)= 0$ and $(f^p)'''(x)/ (f^p)'(x)<0$.
\end{itemize}
  In case (a), the periodic point $x$ is locally attracting
  from only one side  and in case (b)  it is attracting from both sides. 
  Hence a periodic point is attracting if and only if it is hyperbolic attracting or neutral. The immediate basin of an attracting periodic orbit 
contains the critical point 
\cite[Chapter~II, Lemma~6.1, Theorem~6.1]{dMevSt93}. Therefore, there is at most one attracting periodic orbit,
denoted by $O(f)$. 
Let $B(f)$ denote the immediate basin of $O(f)$. If $f$ has no attracting periodic orbit, we put $B(f)=\emptyset$ for convenience.
Let $\delta_{O(f)}$ denote the element of $\mathcal M(f)$ that is supported on $O(f)$.  
The attracting periodic orbit $O(f)$ is called {\it two-sided attracting}
 if it is hyperbolic attracting or neutral in case (b).
 Otherwise ({\it i.e.} neutral in case (a)),  an attracting periodic orbit $\{f^k(x)\}_{k=0}^{p-1}$ is called {\it one-sided attracting}.

\subsection{Lyapunov exponents}\label{affine-sec}
Since our definition of the Lyapunov exponent in \eqref{d-lyap} is a little non-standard, we will use the next lemma in $\S$3.
\begin{lemma}\label{lyap-affine}
Let $f\colon X\to X$ be an $S$-unimodal map with a non-flat critical point. 
 Then 
the map $\mu\in \mathcal M(f)\mapsto\chi^+(\mu)$ is affine. Moreover the following hold:
\begin{itemize}
\item[(a)] if $f$ has no hyperbolic attracting periodic orbit, then $\chi^+(\mu)=\int\log|f'|d\mu$ for any $\mu\in\mathcal M(f)$;
\item[(b)] if $f$ has a hyperbolic attracting periodic orbit, then $\chi^+(\mu)=\int\log|f'|d\mu$ for any $\mu\in\mathcal M(f)$ satisfying $\mu(O(f))=0.$
\end{itemize}
\end{lemma}
\begin{proof} 
Item (a) follows from
 \cite[Proposition~A.1]{Riv20}. Hence, $\chi^+(\cdot)$ is affine if $f$ has no hyperbolic attracting periodic orbit.
Item (b) follows from Ma\~n\'e's theorem \cite[Theorem~A]{Man85} which asserts that the complement of $O(f)$ is a hyperbolic set. Then
 $\chi^+(\cdot)$ is affine on the subspace of measures in $\mathcal M(f)$ not charging $O(f)$. Since every element of $\mathcal M(f)$ can be written uniquely as a convex combination of $\delta_{O(f)}$ and a measure not charging $O(f)$,  $\chi^+(\cdot)$ is also affine in this case.
\end{proof}

\subsection{Renormalization of $S$-unimodal maps}\label{metric}
Let $f\colon X\to X$ be a unimodal map with a non-flat critical point $c$.
   A proper closed subinterval $J$ of $X$ is {\it restrictive} of {\it period} $p\geq2$ if the following hold (c.f. \cite[p.139]{dMevSt93}):
\begin{itemize}
\item the interiors of $J,\ldots,f^{p-1}(J)$ are pairwise disjoint;
\item $f^p(J)\subset J$ and $f^p(\partial J)\subset\partial J$;
\item one of the intervals $J,\ldots,f^{p-1}(J)$ contains $c$ in its interior;
\item $J$ is maximal with respect to these properties: if $J'\supset J$ is a closed interval which is strictly contained in $X$ and satisfies 
the previous three properties with the same integer $p$, then $J'=J$.
\end{itemize}
We define a strictly decreasing sequence of closed intervals 
\[
X=J_0\supsetneq J_1\supsetneq J_2\supsetneq\cdots
\]
which contain the critical point $c$, and a strictly increasing sequence of integers
\[
1=p_0<p_1<p_2<\cdots 
\]
so that $J_m$ is restrictive of period $p_m$ for $m\ge 1$, inductively as follows.
Given $J_m$ and $p_m$ for some $m\geq0$,
 then note that $f^{p_{m}}|_{J_m}\colon J_m\to J_m$ is a unimodal map. 
 If $f^{p_{m}}|_{J_m}$ has a restrictive interval, then define $J_{m+1}$ to be the restrictive interval
of $f^{p_{m}}|_{J_m}$ containing $c$ and with the smallest period $r_m$.
We define $p_{m+1}=p_{m}r_m$. 
If $f^{p_{m}}|_{J_m}$ has no restrictive interval, then we do not define $J_{m+1}$ and stop the definition setting 
$\bar m(f)=m$. 
If this inductive definition continues for arbitrarily large $m$, we set $\bar m(f)=\infty$.
We say $f$ is: 

\begin{itemize}
\item {\it non-renormalizable} if $\bar m(f)=0$;

\item {\it renormarizable} if $\bar m(f)\geq1$;

\item  {\it only finitely renormalizable} if $\bar m(f)<\infty$;

\item {\it infinitely renormalizable}  if $\bar m(f)=\infty$.
\end{itemize}
We assume $f$ has negative Schwarzian derivative, and
 review the dynamics of the unimodal map $f^{p_m}|_{J_m}$, $m\geq0$.
  We classify them into the following cases:
\begin{itemize}
    \item[(A)] $f^{p_m}|_{J_m}$ is non-renormalizable and has no attracting fixed point; 
    \item[(B)] $f^{p_m}|_{J_m}$ is renormalizable; 
    \item[(C)] $f^{p_m}|_{J_m}$ is non-renormalizable and has an attracting fixed point. 
    \end{itemize}
For each $m\geq0$, let $L_m$ denote 
    the closed interval in $J_m$
    bordered by $f^{p_m}(c)$ and $f^{2p_m}(c)$. Note that $L_m$ is possibly a singleton, i.e., a degenerate closed interval.

    In case (A), we have $f^{p_m}(L_m)= L_m$ and  $f^{p_m}|_{L_m}$ is topologically exact: for any relatively open subset $U$ of $L_{m}$ there is an integer multiple $n\geq1$ of $p_m$ such that $f^{n}(U)=L_{m}$.
  One can check this by combining  \cite[Theorem~V.1.3]{dMevSt93} and \cite[Theorem~2.19 and Proposition~2.34]{Rue17} for example.
  
    In case (B), we have 
    $c\in{\rm int}(L_m)$ and $f^{p_m}(L_m)\subset L_m$, and the orbit of any point in ${\rm int}(J_m)$ eventually falls into $L_m$. There exists a restrictive interval $J_{m+1}$ that contains the critical point $c$ and the set $\bigcup_{k=0}^{p_{m+1}/p_m}f^{kp_m}(J_{m+1})$ is forward invariant with respect to $f^{p_m}$. 
    We will consider two subcases
    \begin{itemize}
        \item[(B1)] $p_{m+1}/p_m\neq 2$,  
        or 
        \item[(B2)] $p_{m+1}/p_m=2$.
    \end{itemize}
    Compare these in \textsc{Figure}~\ref{fig:renormalization}.
In case (B2) 
we have $L_{m}=f^{p_m}(J_{m+1})\cup f^{2p_m}(J_{m+1})$ and $L_m\subset \bigcup_{k=0}^{p_{m+1}-1} f^k(J_{m+1})$.

    \begin{figure}
    \begin{overpic}[scale=0.7]{Renormalizedmap.pdf}
    \put(11,37){(A)}
    \put(46,37){(B1)}
    \put(83,37){(B2)}
    \put(11,-7){(C1)}
    \put(46,-7){(C2)}
    \put(83,-7){(C3)}
    \put(16,50){$L_m$}
    \put(51,50){$L_m$}
    \put(85,50){$L_m$}
    \put(48,43){$J_{m+1}$}
    \put(8,6){$z_f$}
     \put(35,-2){$z_f$}
      \put(70,-2){$z_f$}
    \put(80,42){$J_{m+1}$}
    \end{overpic}
    \vspace{2cm}
    \caption{The graphs of the renormalized unimodal maps $f^{p_{m}}|_{J_{m}}\colon J_m\to J_m$.}
    \label{fig:renormalization}
\end{figure}
   
    In case (C), 
    $m=\bar m(f)$ and $J_m$ contains a unique point $z_f$ from $O(f)$. Indeed, 
 if $J_m$ contained two points
    from $O(f)$, then one would find a restrictive interval for $f^{p_m}|_{J_m}$ 
    as an immediate basin of $O(f)$. 
 In particular, the point $z_f\in O(f)\cap J_m$ is a fixed point of the unimodal map $f^{p_m}|_{J_m}$. 
 We will consider three subcases:
\begin{itemize}
    \item[(C1)]  $z_f\in{\rm int}(J_m)$;
    \item[(C2)] $z_f\in\partial J_{m}$ and
 $O(f)$ is neutral;
 \item[(C3)] 
  $z_f\in\partial J_{m}$ and $O(f)$ is hyperbolic attracting. 
\end{itemize}
Compare these in \textsc{Figure}~\ref{fig:renormalization}.
     In case (C1), $O(f)$ is hyperbolic attracting and
 ${\rm int}(J_{m})$ is the connected component of $B(f)$ containing $z_f$.
 For any closed interval $J$ contained in ${\rm int}(J_{m})$,
 the following uniform convergence holds:
 \begin{equation}\label{unic}\displaystyle{\lim_{n\to\infty}\sup_{x\in J}}|f^{p_{m}n}(x)-z_f|=0.\end{equation}
  In cases (C2) and (C3),
 since $f$ has negative Schwarzian derivative
 $f^{p_{m}}(c)$ lies in between $z_f$ and $c$ and the following uniform convergence on $J_m$ holds:
  \begin{equation}\label{unic1}\displaystyle{\lim_{n\to\infty}\sup_{x\in J_{m}}}|f^{p_{m}n}(x)-z_f|=0.\end{equation}
If moreover $z_f$ is two-sided attracting and $m\geq1$, the dynamics of the previous renormalization $f^{p_{m-1}}|_{J_{m-1}}$ is similar to that in case (C1) from Lemma~\ref{stop} below. For this reason, if $1\leq \bar m(f)<\infty$ and $\partial J_{\bar m(f)}$ contains a two-sided
attracting fixed point of $f^{p_{\bar m(f)}}|_{J_{\bar m(f)}}$, then
we zoom out to the previous renormalization
by setting
\[
m(f)=\bar m(f)-1.
\]
In all other cases, we set 
\[
m(f)=\bar m(f).
\]

\begin{lemma}\label{stop}
Let $f\colon X\to X$ be a renormalizable $S$-unimodal map, and let $J$ be a restrictive interval with period $p$ 
containing $c$ and not contained in any other restrictive interval with period smaller than $p$. If $z_f\in\partial J$ is periodic and two-sided attracting, then $z_f$ is a fixed point of $f$, $p=2$ and its immediate basin coincides with ${\rm int}(X)$. Further, for any closed interval $J$ contained in $\mathrm{int}(X)$, we have the uniform convergence  \eqref{unic} on $J$ with $p_m=1$. 
\end{lemma}
\begin{proof}
Let $B$ denote the connected component of the immediate basin of the periodic orbit of $z_f$ that contains $z_f$. Then  we have $f^p(B)\subset B$.
Since the renormalized map $f^p|_J$ belongs to case (C2) or (C3) and since $z_f$ is two-sided attracting, the interval $J$ is strictly contained in $B$. 

We claim that $f(B)\subset B$. Suppose that this is not the case. Then, by the definition of immediate basin, we have that $f(B)\cap B= \emptyset$. This implies that ${\rm cl}(B)$ is a restrictive interval and 
contradicts the assumption of the lemma.

Since the immediate basin $B$ contains only one point in the orbit of $z_f$, the point $z_f$ is a fixed point of $f$. 
Then it follows that $p=2$. Since $f(\partial B)\subset \partial B$, it is immediate to see that $B$ coincides with ${\rm int}(X)$.   
\end{proof}

\begin{remark}\label{rem:C}
To summarize, in the case $m(f)<\infty$ the dynamics of the  renormalization $f^{p_{m(f)}}|_{J_{m(f)}}$ is either in case (A), or one of the following:
\begin{itemize}
    \item[(C-I)] there is a two-sided attracting fixed point in ${\rm int}(J_{m(f)})$ whose immediate basin contains ${\rm int}(J_{m(f)})$;
    \item[(C-II)] there is a one-sided attracting fixed point in $\partial J_{m(f)}$ whose immediate basin contains  $J_{m(f)}$.
\end{itemize}
\end{remark}

\subsection{Structures associated with renormalization}\label{g-str}
Let $f\colon X\to X$ be an 
$S$-unimodal map with a non-flat critical point $c$ such that $m(f)\geq1$.
For each integer $m$ with $0\leq m< m(f)$, 
 we define the $m$-th {\it renormalization cycle} $K_m$ by  
\[
K_{m}=\bigcup_{k=0}^{p_{m}-1} f^k(J_{m}).
\]
 For each integer $m$ with $0\leq m<m(f)-1$, we denote by
$\tilde{\mathscr{P}}_{m}$ the collection of the connected components of 
$K_m\setminus \bigcup_{k=0}^{p_{m+1}-1}{\rm int}(f^k(J_{m+1}))$.
 
 In the case $m(f)<\infty$, we set
\begin{equation}\label{KMF}K_{m(f)}=\begin{cases}
\bigcup_{k=0}^{p_{m(f)}-1} f^k(J_{m(f)}) &\text{ if $f$ has an attracting periodic orbit},\\
\bigcup_{k=0}^{p_{m(f)}-1} f^k(L_{m(f)}) &\text{ if $f$ has no attracting periodic orbit.}\end{cases}
\end{equation}
 Moreover,
we define $\tilde{\mathscr{P}}_{m(f)-1}$ to be the collection of the connected components of the following sets:
$K_{m(f)-1}\setminus \bigcup_{k=0}^{p_{m(f)}-1}{\rm int}(f^k(J_{m(f)}))$
if $f$ has an attracting periodic orbit;
$K_{m(f)-1}\setminus \bigcup_{k=0}^{p_{m(f)}-1}{\rm int}(f^k(L_{m(f)}))$
if $f$ has no attracting periodic orbit.

The elements of $\tilde{\mathscr{P}}_m$ with $0\leq m<m(f)$ are closed intervals in $K_m$, possibly singletons, 
and $\tilde{\mathscr{P}}_m$ has the Markov property: if $P,Q\in\tilde{\mathscr{P}}_m$ and $f(P)\cap Q\neq\emptyset$ 
then $f(P)\supset Q$. 
Let $\mathscr{P}_m$ denote the set of elements of $\tilde{\mathscr{P}}_m$ that are contained in $\bigcup_{k=0}^{p_m-1}f^k(L_m)$.
  Note that $\mathscr{P}_m$ is non-empty.
  
 \begin{remark}
Case~(B2) is rather exceptional. The set
  $\mathscr{P}_m$ consists of the singletons 
  $f^{k}(J_{m+1})\cap f^{p_m+k}(J_{m+1})$, $0\leq k\leq p_m-1$, 
  which form a hyperbolic repelling periodic orbit of prime period $p_m$. 
 \end{remark}
 
 For each integer $m$ with $0\leq m< m(f)$,
 we define 
 \[K_{m,m+1}={\rm cl}(K_m\setminus K_{m+1}),\]
 and \[
\Gamma_{m}= \bigcap_{n=0}^\infty f^{-n}\left(\bigcup_{P\in \mathscr{P}_m}P\right).\]
Note that $\Gamma_m$ for $0\leq m< m(f)$ contains an attracting periodic orbit only if $m=m(f)-1$,
and this periodic orbit is one-sided attracting.
 In this case, $K_{m,m+1}$
is disjoint from the interior of the immediate basin of the attracting periodic orbit.
For convenience we set $\Gamma_{-1}=\partial X.$ 
In the case $m(f)<\infty$ 
we further define 
\[\Gamma_{m(f)}=\begin{cases}  
O(f)&\quad\text{if $f$ has a two-sided attracting periodic orbit, 
}\\
\bigcup_{k=0}^{p_{m(f)}-1}f^k(L_{m(f)})&\quad\text{otherwise.}
\end{cases}\]
The sets $\Gamma_m$, $-1\leq m\leq m(f)$ are non-empty closed sets, and $\Gamma_m\cap\Gamma_{m'}=\emptyset$
holds for all distinct integers $m,m'$ in $\{-1,\ldots,m(f)-1\}$.
Note that $\Gamma_{m(f)-1}$ intersects $\Gamma_{m(f)}$ if and only if 
$L_{m(f)}=J_{m(f)}$.

For each integer $m$ with $-1\leq m\leq m(f)$ we set
\[\mathcal M_m(f)=\{\mu\in\mathcal M(f)\colon{\rm supp}(\mu)\subset \Gamma_m\},\]
where ${\rm supp}(\mu)$ denotes the smallest closed subset of $X$ with full $\mu$-measure.
 The sets  $\mathcal M_m(f)$ are pairwise disjoint,
   and if $m(f)<\infty$ then $\mathcal M(f)$ is the convex hull of  $\bigcup_{m=-1}^{m(f)}\mathcal M_m(f)$.
If $m(f)<\infty$ and $f$ has a one-sided attracting periodic orbit, then
$L_{m(f)}$ is contained in the basin of this attracting periodic orbit, and so $\mathcal M_{m(f)}(f)=\emptyset$.
The set $\mathcal M_{-1}(f)$ is a singleton that is supported on the fixed point in $\partial X$.

\subsection{Symbolic dynamics on each cycle}\label{symbolic-dynamics}
Let $f\colon X\to X$ be an $S$-unimodal map such that $m(f)\geq1$.
  For each integer $m$ with $0\leq m< m(f)$,
there is a topological Markov chain over the finite alphabet $\tilde{\mathscr{P}}_m$  determined by the transition matrix 
\[
(M_{PQ})_{P,Q\in\tilde{\mathscr{P}}_m},\quad
M_{PQ}=\begin{cases} 1& \text{if $f(P)\supset Q$,}\\
0& \text{otherwise}.
\end{cases}
\]  
Let $n\geq2$ be an integer 
 and let $P_0,P_1,\ldots,P_{n-1}\in \tilde{\mathscr{P}}_m$. 
 The word $P_0P_1\cdots P_{n-1}$ of length $n$ is {\it admissible} if 
 $M_{P_kP_{k+1}}=1$ holds for $0\leq k\leq n-2$.
 Let
 $E^n_m$ denote the set of admissible words of elements of $\tilde{\mathscr{P}}_m$
 of length $n$.
Let $\Sigma_m$ denote the set of one-sided infinite 
sequences $\{P_k\}_{k=0}^\infty$ of elements of $\tilde{\mathscr{P}}_m$
such that $P_0\cdots P_{n-1}\in E_m^n$ holds for every $n\geq2$. 
We endow $\Sigma_m$ with 
the restriction of the product topology of the discrete topology of $\tilde{\mathscr{P}}_m$.
Let $\sigma_m\colon\Sigma_m\to\Sigma_m$ denote the left shift: $\sigma_m(\{P_k\}_{k=0}^\infty)=\{P_k\}_{k=1}^\infty$.
  For each $P_0P_1\cdots P_{n-1}\in E^n_m$, we set
$I_{P_0P_1\cdots P_{n-1}}=\bigcap_{k=0}^{n-1}f^{-k}(P_k)$,
and define
\[\pi_m\colon \Sigma_m\to \bigcap_{n=0}^{\infty} f^{-n}\left(\bigcup_{P\in \tilde{\mathscr{P}}_m}P\right)\] by
\begin{equation}\label{pim}\pi_m(\{P_k\}_{k=0}^\infty)\in\bigcap_{n=1}^\infty I_{P_0\cdots P_{n-1} }.\end{equation}

By \cite[Main~Theorem]{Lyu89}, $f$ has no wandering interval, and so any homterval is contained in the basin of an attracting periodic orbit.
Since $\bigcap_{n=1}^\infty I_{P_0\cdots P_{n-1}}$
is not contained in the basin of an attracting periodic orbit, it is not a homterval, namely, a singleton.
Hence, $\pi_m$ is well-defined.

\begin{prop}\label{sft}
Let $f\colon X\to X$ be an $S$-unimodal map with a non-flat critical point such that $m(f)\geq1$.
For each integer $m$ with 
$0\leq m< m(f)$, 
the restriction 
 of $f$ to
$\bigcap_{n=0}^{\infty} f^{-n}\left(\bigcup_{P\in \tilde{\mathscr{P}}_m}P\right)$
is topologically conjugate by the conjugacy map $\pi_m$ to the topological Markov chain $\sigma_m\colon\Sigma_m\to\Sigma_m$.
\end{prop}
\begin{proof}
By definition, $\pi_m$ is continuous and surjective. Since the elements of $\tilde{\mathscr{P}}_m$ are pairwise disjoint, $\pi_m$ is injective and has a continuous inverse. 
 For each $P_0P_1\cdots P_{n-1}\in E^n_{m}$, 
the Markov property of $\tilde{\mathscr{P}}_m$ implies
$f^{k-1}(I_{P_0\cdots P_{n-1}})=I_{P_{k-1}\cdots P_{n-1}}$ for every $1\leq k\leq n$.
Hence $f\circ\pi_m=\pi_m\circ\sigma_m$ holds.
We have verified that 
the restriction of $f$
 to
$\bigcap_{n=0}^{\infty} f^{-n}\left(\bigcup_{P\in \tilde{\mathscr{P}}_m}P\right)$
is topologically conjugate
to $\sigma_m$ by $\pi_m$.
\end{proof}

We will use the following lemma a few times.
\begin{lemma}\label{u-decay}
 Let $f\colon X\to X$ be an $S$-unimodal map with a non-flat critical point such that $m(f)\geq1$.
For each integer $m$ with
$0\leq m< m(f)$, 
 we have \[
 \lim_{n\to\infty}\sup\{
 \left|I_{\omega}\right|\colon   \omega\in E^n_{m}\}=0.\]
\end{lemma}
\begin{proof}
 Suppose there exist $\varepsilon>0$ and an infinite subset $F$ of 
   $\bigcup_{n=1}^\infty E^n_{m}$ such that
  $|I_{\omega}|>\varepsilon$ for all $\omega\in F$.
  Since $X$ has a finite diameter, 
  we can choose a sequence $\{\omega^{(n)}\}_{n=1}^\infty$ in $F$
  such that   $\{I_{\omega^{(n)}}\}_{n=1}^\infty$ is a nested sequence.
  Since $\bigcap_{n=1}^\infty I_{\omega^{(n)}}$ is not a singleton, 
    it is a homterval. However,
  it is not contained in the basin of an attracting periodic orbit. We obtain a contradiction to \cite[Main Theorem]{Lyu89}.
  \end{proof}

\subsection{Distortion estimate on each cycle}\label{dist-est}
Let $Y$ be a non-degenerate compact interval in $X$ and let $g\colon Y\to X$ be a $C^1$ map.
Let $J$ be a subinterval of $Y$ such that the restriction of $g$ to $J$ is a diffeomorphism onto its image. By a {\it distortion of $g$ on $J$} we mean the quantity
\[\sup_{x,y\in J}\frac{|g'(x)|}{|g'(y)|}.
\]
We will frequently use the following distortion estimates on each cycle.

\begin{prop}\label{koebe}
Let $f\colon X\to X$ be an $S$-unimodal map with a non-flat critical point such that $m(f)\geq1$. For each integer $m$ with
$0\leq m< m(f)$ the following hold:
\begin{itemize}
\item[(a)] 
if $\Gamma_m$ does not contain a neutral periodic orbit, then
there exists a constant $\gamma_m\geq1$ such that
if $n\geq1$ then 
the distortion of $f^n$ on any
connected component of $\bigcap_{k=0}^{n-1}f^{-k}(K_{m,m+1})$ is bounded by $\gamma_m$;

    \item[(b)]if
    $\Gamma_{m}$ contains a neutral periodic orbit, then for any $\varepsilon>0$ there exists $N\geq1$ such that 
    if $n\geq N$ then the distortion of $f^n$ on every connected component of 
    $\bigcap_{k=0}^{n-1}f^{-k}(K_{m,m+1})$ is bounded by $e^{\varepsilon n}$. 
    \end{itemize}
\end{prop}

\begin{proof}
Since $f$ is $C^2$, the restriction of $\log|f'|$ to
the union of the elements of $\tilde{\mathscr{P}}_m$ 
not containing a point
from an attracting periodic orbit
 is Lipschitz continuous. Moreover, by Ma\~n\'e's theorem \cite[Theorem~A]{Man85},
 the maximal $f$-invariant set in this union is a hyperbolic set.
 Hence (a) holds.

As for (b),
if $\Gamma_m$ contains a neutral periodic orbit, then
$m=m(f)-1$ and $\partial J_{m(f)}$ contains a point from the neutral periodic orbit. 
  Let $\varepsilon>0$, let $n\geq2$ and let
  $W$ be a connected component of $\bigcap_{k=0}^{n-1}f^{-k}(K_{m,m+1})$.
 There exists
  $\omega\in E^n_{m}$ such that
  $W=I_\omega$.
    Since $K_{m,m+1}$ does not contain the critical point of $f$, the infimum of $|f'|$  over this set is positive.
For all $x,y\in I_\omega$
we have
\[\begin{split}\log\frac{|(f^n)'(x)|}{|(f^n)'(y)|}&\leq \left(\sup_{K_{m,m+1} }\frac{|f''|}{|f'|}\right)\sum_{k=0}^{n-1}|f^k(I_\omega)|\leq\left(\sup_{K_{m,m+1} }\frac{|f''|}{|f'|}\right) \sum_{k=0}^{n-1}\sup_{\omega\in E^{n-k}_m}|I_{\omega}|.\end{split}\]
By Lemma~\ref{u-decay}, the last number is bounded by $\varepsilon n$ for a sufficiently large $n$. 
The proof of Proposition~\ref{koebe} is complete.
\end{proof}

\subsection{Topological exactness on each cycle}\label{top-section}
 Let $Y$ be a non-degenerate interval in $X$ and $g\colon Y\to Y$ be a continuous map.
Let $U$ be a non-degenerate subinterval of $Y$ and let $n\in\mathbb N$. Each connected component of $g^{-n}(U)$
  is called {\it a pullback of $U$ by $g^n$}. 
  If $V$ is a pullback of $U$ by $g^n$ and 
  $g^n|_V$ is a diffeomorphism, then $V$ is called {\it a diffeomorphic pullback of $U$ by $g^n$.}

The next lemma will be used later 
in $\S$\ref{k} for the proof of the lower bound in Theorem~A, in order to glue different orbits in differenc cycles together to form one orbit with required properties.
 It will also be used later in $\S$\ref{escape}
for the proof of the upper bound
in Theorem~A.

\begin{lemma}\label{lem-Lm}
Let $f\colon X\to X$ be an $S$-unimodal map with a non-flat critical point such that $m(f)\geq1$.
Let $0\leq m<m(f)$ and suppose $p_{m+1}/p_m\neq 2$.
For any open interval 
$J\subset L_m$ that intersects $\Gamma_m$, there exists an integer $n\geq1$ such that $f^{p_m n}(J)=L_m$, and for any $n'\ge n$ there exists a diffeomorphic pullback of ${\rm int}(L_m)$ by $f^{p_m n'}$ that is contained in $J$.
\end{lemma}
\begin{proof}
We assume $m=0$ in the proof below. In the case $m>0$, we can apply the same argument to the renormalized map $f^{p_m}\colon J_m \to J_m$ to obtain the conclusion of the lemma. 

Without loss of generality we may assume $X=[0,1]$ and the critical point $c$ is the maxima of $f$.
We claim that the topological entropy $h_{\rm top}(f)$ of $f\colon X\to X$ is greater than or equal to 
$\log\sqrt{2}$. Indeed, the assumption $p_1=p_1/p_0>2$ implies that 
if we take an interval $[x',x]$ such that $x'<c<x$ and $f(x)=x=f(x')$, then 
 $f^2(x)=x$, $f^2(c)<x'$ and $[x',x]\subset f^2([x',x])$. Since $[x',x]$ contains an $f^2$-invariant set on which $f^2$ is topologically conjugate to the full shift on two symbols, we obtain $h_{\rm top}(f)= h_{\rm top}(f^2)/2\ge \log\sqrt{2}$.

By the Milnor-Thurston kneading theory (see \cite[Ch.2~Theorem~8.1]{dMevSt93}), there exists a semi-conjugacy from $f\colon X\to X$ to a piecewise linear map with slopes $\pm s$ where $s=e^{h_{\rm top}(f)}\ge \sqrt{2}$.  More precisely, there exists a continuous and surjective monotone map $\lambda\colon X\to X$ satisfying $T\circ \lambda=\lambda\circ f$ for the tent map 
\[
T\colon X\to X, \quad T(x)=\min\{sx,s(1-x)\}.
\]
The map $\lambda$ sends a non-trivial interval in $X$ to a point only if it is eventually mapped by $f$ into the restrictive interval $J_1$. (For this, we use the non-existence of wandering intervals. See \cite[III.4, Proposition 4.3]{dMevSt93}.)
Hence $\lambda$ maps an open interval $J\subset L_0$ that intersects the Cantor set 
$\Gamma_0$ onto a non-trivial interval $J'$. 

If $s=\sqrt{2}$, then $T$ is renormalizable with period $2$ and so is $f$, contradicting the assumption $p_1>2$. 
So we may suppose $s> \sqrt{2}$. Then there is $n\ge 0$ such that both $T^n(J')$ and $T^{n+1}(J')$ contain the turning point $1/2$ of $T$ in their interiors (see \cite[III.4,~Proposition~4.4 and Exercise~4.1]{dMevSt93}). Consequently, $f^n(J)$ and $f^{n+1}(J)$ contain $c$ in their interiors and therefore  $f^{n+2}(J)\supset L_0=[f^{2}(c),f(c)]$. Then we have $f^{n'}(J)\supset f^{n'-n-2}(L_0)=L_0$ for $n'\ge n+2$.

Since $f$ is unimodal, for any non-degenerate subinterval $I$ of $X$ we can take a subinterval $I'\subset I$ such that $f\colon I' \to f(I)$ is a homeomorphism.  Applying this procedure to the sequence of maps
\[
\begin{CD}
    J@>{f}>> f(J) @>{f}>> \cdots@>{f}>>  f^{n'-1}(J) @>{f}>> f^{n'}(J)\supset L_0,
\end{CD}
\]
we find an interval $J'\subset J$ such that $f^{n'}\colon J' \to L_0$ is a homeomorphism. Then ${\rm int}(J')$ is a diffeomorphic pullback of ${\rm int}(L_0)$ by $f^{n'}$, as required in the lemma.
\end{proof}

Let $f\colon X\to X$ be an $S$-unimodal map with a non-flat critical point such that $m(f)\geq1$.  Suppose $p_{m+1}/p_m\neq 2$ for some $0\leq m<m(f)$.
In view of Lemma~\ref{lem-Lm}, 
for each $P\in\mathscr{P}_m$ that is contained in $L_m$ we 
fix $q(P)\in\mathbb N$ 
such that $f^{p_m q(P)}(P)=L_m$, and for any $n\ge q(P)$ there exists a diffeomorphic pullback of ${\rm int}(L_m)$ by $f^{p_m n}$ that is contained in $P$. 
For later uses and convenience we set
\begin{equation}\label{Mm-def}M_m=\max\{p_mq(P)\colon P\in\mathscr{P}_m\text{ and }P\subset L_m\},\ \text{ and }\ M_{-1}=2.\end{equation}

\subsection{Notation}\label{notation-sec}
Let $C(X)$ denote the set of real-valued continuous functions on $X$.
For $\phi\in C(X)$ and $n\in\mathbb N$, write $S_n\phi$ for the sum
$\sum_{k=0}^{n-1}\phi\circ f^k$.
For $\ell\in\mathbb N$ define
\[
 C(X)^\ell=\{\vec{\phi}=(\phi_1,\ldots,\phi_\ell)\colon\phi_j\in C(X)\ \text{ for }1\leq j\leq\ell\}.
\]
For $\vec{\phi}=(\phi_1,\ldots,\phi_\ell)\in C(X)^{\ell}$, $\vec{\alpha}=(\alpha_1,\ldots,\alpha_\ell)\in\mathbb R^{\ell}$
and a measure $\mu\in\mathcal M$, the expression 
$\int\vec{\phi} d\mu>\vec{\alpha}$ indicates that $\int\phi_j d\mu>\alpha_j$ holds for $1\leq j\leq\ell$. 
We set 
\[
A_n(\vec{\phi},\vec{\alpha})=
\left\{x\in X\colon \int\vec\phi d\delta_x^n>\vec\alpha\right\},
\]
and \[S_n\vec{\phi}=(S_n\phi_1,\ldots,S_n\phi_\ell).\]
For $a\in\mathbb R$ and $\vec v=(v_1,\ldots,v_\ell)\in\mathbb R^{\ell}$, we write 
\[
\vec{a}=(a,a,\ldots,a)\in\mathbb R^{\ell}\ \text{ and }\ \|\vec{v}\|=\max_{1\leq j\leq\ell}|v_j|.
\]
For $\vec v,\vec w\in\mathbb R^{\ell}$, the expression $\vec v\geq \vec w$
indicates that $v_j\geq w_j$ holds for $1\leq j\leq\ell$.

\subsection{Infinitely renormalizable maps}\label{analysis}
Let $f\colon X\to X$ be an infinitely renormalizable $S$-unimodal map with a non-flat critical point. 
The omega limit set of Lebesgue almost every initial point $x\in X$ coincides with the closed invariant set
\[
\Lambda=\bigcap_{m=0}^\infty K_m
\]
and the empirical measure $\delta_x^n$ converges to {\it the post-critical measure} $\mu_\infty$ supported on $\Lambda$. The restriction of $f$ to $\Lambda$ is uniquely ergodic and the post-critical measure $\mu_\infty$ is the unique invariant measure on $\Lambda$. Moreover, $h(\mu_\infty)=\chi^+(\mu_\infty)=0$ holds (see 
\cite[Theorem~3.4(b)]{BruKel98}).

In order to `approximate' $\mu_\infty$, we will use the following lemma, which in particular
 asserts that the post-critical measure is approximated
 by empirical measures along sufficiently long orbit segments in sufficiently deep cycles.

\begin{lemma}
  [{\cite[Lemma~2.3]{Tak21}}]\label{deep}
 Let $f\colon X\to X$ be an $S$-unimodal map with a non-flat critical point such that $m(f)=\infty$.
 Let 
 $\ell\in\mathbb N$, $\vec{\phi}\in C(X)^{ \ell}$, $\vec{\alpha}\in\mathbb R^{\ell}$.
 For any $\varepsilon>0$  there exist $m_*$, $N_*\in\mathbb N$ 
 such that the following hold:
 \begin{itemize}
\item[(a)] if $n\geq N_*$ and
  $ A_{n}(\vec{\phi},\vec{\alpha})\cap K_{m_*}\neq\emptyset$
  then 
\[\int\vec\phi d\mu_\infty>\vec\alpha-\vec\varepsilon;\]
\item[(b)]
if $n\geq N_*$ then
\[\sup_{K_{m_*}}\left\|\frac{1}{n}S_n\vec\phi-\int\vec\phi d\mu_\infty\right\|<\varepsilon.\]
\end{itemize}
  \end{lemma}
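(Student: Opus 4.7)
The strategy is to first establish (b) by exploiting the unique ergodicity of $f|_\Lambda$ with respect to the post-critical measure $\mu_\infty$, and then deduce (a) as an immediate corollary. The argument hinges on the observation that any weak*-accumulation point of empirical measures sampled from orbits whose initial points lie in sufficiently deep renormalization cycles $K_m$ must be an $f$-invariant probability measure supported on $\Lambda$; by unique ergodicity on $\Lambda$, this forces it to coincide with $\mu_\infty$.

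For (b) I argue by contradiction. Suppose there exist $\epsilon>0$ and sequences of integers $m_k\to\infty$ and $n_k\to\infty$, points $x_k\in K_{m_k}$, and an index $j\in\{1,\dots,l\}$ (constant after passing to a subsequence) such that
\[
\left|\frac{1}{n_k}S_{n_k}\phi_j(x_k)-\int\phi_j\,d\mu_\infty\right|\geq \epsilon.
\]
Set $\nu_k=\delta_{x_k}^{n_k}$ and, by compactness of $\mathcal M$, refine further so that $\nu_k\to\nu$ in the weak*-topology. Two properties of $\nu$ are then checked. First, $\nu$ is $f$-invariant, because $f_*\nu_k-\nu_k=\tfrac{1}{n_k}(\delta_{f^{n_k}x_k}-\delta_{x_k})$ tends to $0$ in the weak*-topology. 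Second, $\nu$ is supported on $\Lambda$: each $K_m$ is a closed forward $f$-invariant set, and $\mathrm{supp}(\nu_k)\subset K_{m_k}\subset K_m$ once $m_k\geq m$, so the Portmanteau theorem gives $\nu(K_m)\geq \limsup_k\nu_k(K_m)=1$, and continuity from above yields $\nu(\Lambda)=\nu(\bigcap_m K_m)=1$. By unique ergodicity of $f|_\Lambda$ we conclude $\nu=\mu_\infty$, hence $\int\phi_j\,d\nu_k\to\int\phi_j\,d\mu_\infty$, contradicting the standing assumption. This establishes (b).

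For (a), apply (b) with the same $\epsilon$ to obtain $m_*$ and $N_*$. If $n\geq N_*$ and $x\in A_n(\vec\phi;\vec\alpha)\cap K_{m_*}$, then by the definition of $A_n$ we have $\tfrac{1}{n}S_n\phi_j(x)>\alpha_j$ for every $j$, while (b) yields $\tfrac{1}{n}S_n\phi_j(x)<\int\phi_j\,d\mu_\infty+\epsilon$. Subtracting gives $\int\phi_j\,d\mu_\infty>\alpha_j-\epsilon$ for all $j$, which is exactly the claim of (a).

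The argument is essentially soft and carries no substantial obstacle. The only subtle point is the identification of the support of the limit measure $\nu$: one need not invoke the stronger fact that $K_m$ shrinks to $\Lambda$ in Hausdorff distance, but only that $(K_m)_m$ is a decreasing tower of closed forward-invariant sets with $\bigcap_m K_m=\Lambda$, combined with the elementary property that weak*-limits cannot lose mass on closed sets.
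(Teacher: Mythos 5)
Your proof is correct and proves both parts, but it takes a genuinely different route from the paper's. The paper argues constructively: it picks $\delta>0$ from the modulus of continuity of $\vec\phi$, chooses $m_*$ so that every interval $f^kJ_{m_*}$ has diameter $<\delta$, and exploits the fact that $f$ cyclically permutes these intervals, so that the orbit of any $x\in K_{m_*}$ stays $\delta$-close (coordinatewise) to the orbit of a nearby point of $\Lambda$; unique ergodicity gives uniform convergence of Birkhoff averages on $\Lambda$ (providing $N_*$), and the shadowing transfers this to all of $K_{m_*}$. In contrast, you replace the explicit geometric control by a soft compactness/contradiction argument: any sequence of empirical measures sourced from deeper and deeper cycles accumulates, by weak*-compactness of $\mathcal M$, on an $f$-invariant measure supported on $\Lambda=\bigcap_m K_m$ (the forward invariance and closedness of each $K_m$, Portmanteau, and continuity from above do the work), and unique ergodicity then forces the limit to be $\mu_\infty$. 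Both hinge on the same dynamical input — unique ergodicity of $f|_\Lambda$ — but the paper's argument is quantitative (it makes $m_*$ explicit in terms of the modulus of continuity), whereas yours is non-constructive. Your ordering, proving (b) first and deducing (a) as a one-line corollary, is cleaner than the paper's presentation, which treats (a) with its own $\epsilon/2$ bookkeeping before stating (b).
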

 For infinitely renormalizable maps,
the Lyapunov exponent depends continuously on invariant measures.
\begin{lemma}[{\cite[Lemma~2.6]{Tak21}}]\label{rate}
Let $f\colon X\to X$ be an $S$-unimodal map with a non-flat critical point such that $m(f)=\infty$. Then
$\mu\in\mathcal M(f)\mapsto\chi^+(\mu)$ is continuous. 
\end{lemma}

\section{Large deviations lower bound}
In this section we complete the proof of the lower bound in Theorem~A.
In $\S$\ref{keylow} we show that this follows from a key lower bound stated in Proposition~\ref{low}.
The rest of this section is dedicated to the proof of Proposition~\ref{low}.

\subsection{Key lower bound}\label{keylow}

The next proposition allows us to approximate a given invariant measure with finitely many intervals
in a particular sense.
\begin{prop}\label{low}
Let $f\colon X\to X$ be an $S$-unimodal map with a non-flat critical point.
Let $\ell\in\mathbb N$, $\vec{\phi}\in C(X)^{ \ell}$, $\vec{\alpha}\in\mathbb R^{\ell}$ and
let $\mu\in\mathcal M(f)$ satisfy $\int\vec\phi d\mu>\vec{\alpha}$.
 For any $\varepsilon>0$ there exists $N\geq1$ such that
 for each integer $n\geq N$, there exists
a finite collection $\mathscr{A}_{n}$ of closed intervals in
$A_{n}(\vec{\phi},\vec{\alpha})$ with pairwise disjoint interiors such that 
\begin{align}\tag{a}\#\mathscr{A}_{n}&>\exp\left((h(\mu)-\varepsilon)n\right)\quad and\\
\tag{b}
 |A|&>\exp\left(-(\chi^+(\mu)+\varepsilon)n\right)\quad\text{for every }A\in\mathscr{A}_{n}.
\end{align}
\end{prop}
In order to prove Proposition~\ref{low}, we   approximate the 
given measure $\mu$ 
with a finite convex combination of measures each supported on some $\Gamma_m$ or $\Lambda$, and then for each of these measures construct a collection of finitely many intervals. We then glue orbits from these intervals together to construct a collection intervals with the desired properties.

The proof of Proposition~\ref{low} is given in $\S$\ref{endow}.
Below we deduce a corollary to Proposition~\ref{low} and complete the proof of
the lower bound in 
Theorem~A.
\begin{cor}\label{des}Let $f\colon X\to X$ be an $S$-unimodal map with a non-flat critical point.
Let $\ell\in\mathbb N$, $\vec{\phi}\in C(X)^{\ell}$, $\vec{\alpha}\in\mathbb R^{\ell}$ and let $\mu\in\mathcal M(f)$ satisfy $\int\vec{\phi} d\mu>\vec{\alpha}$. Then
\[
\liminf_{n\to\infty}\frac{1}{n}\log |A_n(\vec{\phi},\vec{\alpha})|\geq  F(\mu).
\]\end{cor}
\begin{proof}
 From Proposition~\ref{low} and \eqref{eq:F}, for any $\varepsilon>0$ there exists $N\geq1$ such that
$|A_{n}(\vec{\phi},\vec{\alpha})|>
 e^{(F(\mu)-2\varepsilon)n}$ holds for every $n\geq N$.
 Taking logarithms of both sides, dividing by $n$ and letting $n\to \infty$ and then 
 $\varepsilon\to 0$, we obtain the desired inequality.
\end{proof}

\begin{proof}[Proof of the lower bound in Theorem~A]
Note that subsets of $\mathcal M$ of the form
$\{\mu\in\mathcal M\colon\int\vec\phi d\mu>\vec\alpha\}$
with $\ell\in\mathbb N$, $\vec{\phi}\in C(X)^{ \ell}$, $\vec{\alpha}\in\mathbb R^{\ell}$
constitute a base of the weak* topology of $\mathcal M$.
So, any non-empty open subset
 $\mathcal G$ of $\mathcal M$ is written as the union 
$\mathcal G=\bigcup_{\lambda}\mathcal G_\lambda$ of sets $\mathcal G_\lambda$ of this form. 
For each $\mathcal G_\lambda$, Corollary~\ref{des} gives
\[
\liminf_{n\to\infty}\frac{1}{n}\log|\{x\in X\colon\delta_x^n\in\mathcal G_\lambda\}|
\geq \sup_{\mathcal G_\lambda}F.
\] 
Hence we obtain
\[\liminf_{n\to\infty}\frac{1}{n}\log| \{x\in X\colon\delta_x^n\in\mathcal G\}|
\geq\sup_{\lambda}\sup_{\mathcal G_\lambda} F= \sup_{\mathcal{G}} F= -\inf_{\mathcal G} I,
\]
as required in Theorem~A.
\end{proof}

 \subsection{Approximation of a measure on  each cycle}\label{k} The rest of this section is entirely dedicated to a proof of Proposition~\ref{low}.
We fix $\ell\in\mathbb N$, $\vec{\phi}\in C(X)^{\ell}$,  $\vec{\alpha}\in\mathbb R^{\ell}$ in the statement of Proposition~\ref{low} till the end of $\S$\ref{endow},
and assume $f\colon X\to X$ is an $S$-unimodal map with a non-flat critical point $c$.
 
The next lemma allows us to approximate an invariant measure on each single cycle with a finite collection of intervals.
\begin{lemma}\label{katok}
Let $f\colon X\to X$ satisfy $m(f)\geq1$. 
  Let $0\leq m< m(f)$
  and suppose $p_{m+1}/p_m\neq 2$. Let
  $\mu\in\mathcal M_m(f)$, $P\in\mathscr{P}_m$ satisfy $P\subset L_m$ and
  $\mu(P)>0$, and let $P'$ be a non-degenerate closed interval in $L_{m}$.
  For any $\varepsilon>0$, there exists $N\geq1$ such that for each  integer $q\geq N$, there exists a finite collection $\mathscr{B}_{p_mq}(P,P')$ of pullbacks of $P'$ by $f^{p_mq}$ that are contained in $P$ such that 
 \begin{align}\tag{a}\left|\frac{1}{p_mq}\log\# \mathscr{B}_{p_mq}(P,P')-h(\mu)\right|&<\varepsilon\quad and\\
\tag{b}
\sup_{B\in\mathscr{B}_{p_mq}(P,P')}\sup_{B}\left\|\frac{1}{p_mq}S_{p_mq}\vec\phi-\int\vec\phi d\mu\right\|&<\varepsilon.
\end{align}
If moreover $P'\cap\{f(c),\ldots,f^{M_m-1}(c)\}=\emptyset$ for the integer
 $M_m\geq1$ in \eqref{Mm-def}, 
then every pullback $B\in\mathscr{B}_{p_mq}(P,P')$ is diffeomorphic and satisfies
\begin{equation}\tag{c}
 \sup_{B}\left|\frac{1}{p_mq}\log|(f^{p_mq})'|-\chi^+(\mu)\right|<\varepsilon.
\end{equation}
 \end{lemma}
\begin{proof} 
From the definition of $\pi_m$ in \eqref{pim} and Proposition~\ref{sft},  $f|_{\Gamma_m}$ is topologically conjugate to a topological Markov chain   via the finite Markov partition $\mathscr{P}_m$. For $q\in\mathbb N$ let $\tilde{\mathscr{B}}_{p_m q}(P)$ denote the collection of the pullbacks of elements of $\mathscr{P}_m$ by $f^{p_m q}$ that are contained in $P$. Lemma~\ref{u-decay} gives $\lim_{q\to\infty}\sup_{B\in\tilde{\mathscr{B}}_{p_mq}(P)}|B|=0$, and so for any continuous function $\varphi\colon K_{m,m+1}\to\mathbb R$ we have \begin{equation}\label{w-decay-c}\lim_{q\to\infty}\sup_{B\in\tilde{\mathscr{B}}_{p_m q}(P)}\sup_{x,y\in B}\frac{1}{p_m q}\left|S_{p_m q}\varphi(x)-S_{p_m q}\varphi(y)\right|=0.\end{equation}

The proof of Lemma~\ref{katok} breaks into two cases. We first treat the case where $\mu$ is ergodic. 
Since $m<m(f)$, $\mu$ is not supported on the orbit of a hyperbolic attracting periodic orbit (see $\S$\ref{g-str}).
Let $\varepsilon>0$. 
Since $\mu(L_m)=1/p_m$, the normalized restriction of $\mu$ to $L_m$ is an $f^{p_m}|_{L_m}$-invariant Borel probability measure
with entropy $p_mh(\mu)$. 
For $q\in\mathbb N$, we set $r=q-M_m /p_m$.
Since $P\in\mathscr{P}_m$,
    $\tilde{\mathscr{B}}_{p_m r}(P)$
    coincides with the collection of the pullbacks of elements of $\mathscr{P}_m$ by $f^{p_m r}$ that are contained in $P$. 
We claim that for all sufficiently large $q$ there exists a subset $\mathscr{B}_{p_m r}(P)$ of $\tilde{\mathscr{B}}_{p_m r}(P)$ for which the following hold:
\begin{align*}\left|\frac{1}{p_m r}\log\#\mathscr{B}_{p_m r}(P)-h(\mu)\right|&<\frac{\varepsilon}{3};\\
\sup_{B\in\mathscr{B}_{p_m r}(P)}\sup_{B}\left\|\frac{1}{p_m r}S_{p_m r}\vec\phi-\int \vec\phi d\mu\right\|&<\frac{\varepsilon}{3};\\
\sup_{B\in\mathscr{B}_{p_m r}(P)}\sup_{B}\left|\frac{1}{p_m r}\log|(f^{p_m r})'|-\chi^+(\mu)\right|&<\frac{\varepsilon}{3}.\end{align*}
Indeed, the first inequality follows from Shannon-McMillan-Breiman's theorem \cite{CFS82} for
the normalized restriction. The second one follows from 
\eqref{w-decay-c} and
Birkhoff's ergodic theorem, and
the third one follows from Proposition~\ref{koebe},
 Birkhoff's ergodic theorem and $\chi^+(\mu)=\int\log|f'|d\mu$ by Lemma~\ref{lyap-affine}.

By the definition of $M_m$ in \eqref{Mm-def}, if $Q\in \mathscr{P}_m$ and $Q\subset L_m$ then 
$f^{M_m}(Q)=L_m$.
For each $ B\in \mathscr{B}_{p_m r}(P)$, the interval $f^{p_m r}(B)$ is an element of $\mathscr{P}_m$ that is contained in $L_m$.
 We fix a connected component $W_B$ of $f^{p_m r}(B)\cap f^{-M_m}(P')$.
 Let $B'$ denote the diffeomorphic pullback of $W_B$ by $f^{p_m r}$ that is contained in $B$. We define
 \[\mathscr{B}_{p_mq}(P,P')=\{B'\colon B\in\mathscr{B}_{p_mr}(P)\}.\]
 Clearly we have $\#\mathscr{B}_{p_mq}(P,P')=\#\mathscr{B}_{p_m r}(P)$, and
 the elements of $\mathscr{B}_{p_mq}(P,P')$ are
 pullbacks of $P'$ by $f^{p_m q}$. 
  If $P'\cap\{f(c),\ldots,f^{M_m-1}(c)\}=\emptyset$
then for every $ B\in \mathscr{B}_{p_m r}(P)$,
 $W_B$ is a diffeomorphic pullback of $P'$ by $f^{M_m}$. So, every
  element of $\mathscr{B}_{p_mq}(P,P')$ is a diffeomorphic
 pullback of $P'$ by $f^{p_m q}$. 
 Therefore, for all sufficiently large $q\geq1$ we obtain the inequalities in (a) (b) (c) of Lemma~\ref{katok} from the corresponding three estimates for $\mathscr{B}_{p_m r}(P)$ as above. 
To deduce (c) we have noted that the orbits of  points in elements of $\mathscr{B}_{p_mq}(P,P')$ up to time $p_mq$ are uniformly bounded away from the critical point $c$ by a distance independent of $q$. 

It is left to treat the case where $\mu$ is non-ergodic. 
 By Lemma~\ref{lyap-affine}, the map $\nu\in \mathcal{M}_m(f)\mapsto \chi^+(\nu)\in \mathbb{R}$ is affine.
By virtue of the ergodic decomposition theorem and Jacobs' theorem on the decomposition of entropy \cite{Jak60}, for any $\varepsilon>0$ there exist a finite number of ergodic measures $\mu_1,\ldots,\mu_s$
in $\mathcal M_m(f)$ and constants $\rho_1,\ldots,\rho_s$ in $(0,1)$ adding up to $1$ 
such that the measure $\mu'=\sum_{i=1}^s\rho_i\cdot\mu_i$ in $\mathcal M_m(f)$ satisfies 
\[
|h(\mu)-h(\mu')|<\varepsilon, \quad 
\left\|\int\vec{\phi}d\mu-\int\vec{\phi}d\mu'\right\|<\varepsilon,\quad |\chi^+(\mu)-\chi^+(\mu')|<\varepsilon,
\]
and $\mu'(P)$ is sufficiently close to $\mu(P)$ so that $\mu'(P)>0$.
 With no loss of generality
we may assume that $\mu$ is written as a convex combination of ergodic measures $\mu_i\in \mathcal M_m(f)$:
\[
\mu=\rho_1\cdot\mu_1+\rho_2\cdot\mu_2+\cdots+\rho_s\cdot\mu_s, 
\]
with $\mu_1(P)>0$ and $\rho_1,\ldots,\rho_s$ adding up to $1$.
 We fix $P_1,\ldots,P_s\in \mathscr{P}_m$ so that $P_1=P$, $P_i\subset L_m$ and $\mu_i(P_i)>0$ for  $1\leq i\leq s$. We set $P_{s+1}=P'$.

Let $\varepsilon>0$. 
Let $q\ge s$ be a large integer, and write it as a sum of positive integers \[q= q_1+q_2+\cdots+q_s,\]
with $|p_mq_i-p_m\rho_i q|\leq p_m$
for $1\leq i\leq s.$
From the previous argument in the ergodic case,
if $q$ is sufficiently large then
for each $i$ 
there is a finite collection $\mathscr{B}_{p_mq_i}(P_i,P_{i+1})$ of diffeomorphic pullbacks of $P_{i+1}$ by $f^{p_m q_i}$ that is 
contained in $P_i$ for which the following three inequalities hold:
 \begin{align}\label{katok1}
 \left|\frac{1}{p_m q_i}\log\# \mathscr{B}_{p_mq_i}(P_i,P_{i+1})-h(\mu_i)\right|&<\frac{\varepsilon}{2};\\
\label{katok2} \sup_{B\in\mathscr{B}_{p_mq_i}(P_i,P_{i+1})  }\sup_{B}
\left\|\frac{1}{p_m q_i}S_{p_m q_i}\vec\phi-\int\vec\phi  d\mu_i\right\|&<
\frac{\varepsilon}{2};\\
\label{katok3} \sup_{B\in\mathscr{B}_{p_mq_i}(P_i,P_{i+1})   }\sup_{B}
\left|\frac{1}{p_m q_i}\log|(f^{p_m q_i})'|-\chi^+(\mu_i)\right|&<
\frac{\varepsilon}{2}.
\end{align}
 We now define $\mathscr{B}_{p_mq}(P,P')$ to be the collection of the diffeomorphic pullbacks of $P'$ by the composition
$g_s\circ  g_{s-1}\circ\cdots \circ  g_1$
of diffeomorphisms  $g_i$ of the form $f^{p_m q_i}|_{B}$ with $B\in\mathscr{B}_{p_mq_i}(P_i,P_{i+1})$. If $q$ is sufficiently large, then by \eqref{katok1} we have 
\[\begin{split}
\left|\frac{1}{p_m q}\log\#\mathscr{B}_{p_mq}(P,P')-h(\mu) \right|\leq&\frac{1}{p_m q}\left|
\sum_{i=1}^s\log\#\mathscr{B}_{p_mq_i}(P_i,P_{i+1})   -p_m q \sum_{i=1}^s\rho_i h(\mu_i)        \right|\\
\leq& \frac{1}{p_m q}\sum_{i=1}^s
\left|\log\#\mathscr{B}_{p_mq_i}(P_i,P_{i+1})  -p_m q_i h(\mu_i)  \right|\\
&+ \sum_{i=1}^s\left|\frac{q_i}{q}-\rho_i\right| h(\mu_i) 
<\frac{\varepsilon}{2}+\frac{s}{q}h_{\rm top}(f) <\varepsilon,
\end{split}\]
where $h_{\rm top}(f)$ denotes the topological entropy of $f$.
Moreover, for each $B\in\mathscr{B}_{p_mq}(P,P')$,
 using \eqref{katok2} for $1\leq i\leq s$ 
 we have
\[\begin{split}\sup_B\left\|\frac{1}{p_m q}S_{p_m q}\vec\phi-  \int\vec\phi d\mu\right\|
\leq&\frac{1}{p_m q}\sum_{i=1}^s \sup_{B'\in\mathscr{B}_{p_mq_i}(P_i,P_{i+1}) }\sup_{B'}\left\|S_{p_m q_i}\vec\phi-p_m q_i\int\vec\phi d\mu_i\right\|\\
&+\sum_{i=1}^s \left|\frac{q_i}{q}-\rho_i\right| \|\vec\phi \|
<\frac{\varepsilon}{2}+\frac{s}{q}\|\vec \phi\| < \varepsilon.
\end{split}\]
 Since $\varepsilon>0$ is arbitrary, the above two estimates imply (a) and (b) in Lemma~\ref{katok} respectively. 
We can prove (c) in the same way as (b), 
using \eqref{katok3} 
and noting that the orbits of  points in elements of $\mathscr{B}_{p_mq}(P,P')$ up to time $p_mq$ are uniformly bounded away from the critical point $c$ by a distance independent of $q$. 
 This completes the proof of Lemma~\ref{katok}.
\end{proof}

\subsection{The initial transition} Let $\mu_{-1}$ denote the element of the singleton
$\mathcal M_{-1}(f)$. Recall that $\mu_{-1}$ is the unit point mass at the fixed point in $\partial X$ (see $\S$\ref{g-str}).  Put $p_{-1}=1$. Recall that $L_0$ is the closed interval in $X$
    bordered by $f(c)$ and $f^{2}(c)$ (see $\S$\ref{metric}). Let $x_f$ denote the fixed point of $f$ in $\partial X$. If $L_0$ is strictly contained in $X$, let $P_{-1}$ denote the connected component of $X\setminus L_0$ 
 that contains $x_f$.

 \begin{lemma}\label{katok-initial}
Let $f\colon X\to X$ be $S$-unimodal such that $L_0$ is strictly contained in $X$
and is not a singleton. 
Let $P$ be a non-degenerate closed interval in $\mathrm{int}(X)$.  Suppose either
\begin{itemize}
    \item[(i)]  
    $f$ is renormalizable and $P\subset L_0$, or
    \item[(ii)] 
    $f$ is non-renormalizable and 
    $P$ is contained in the open interval bordered by $x_f$ and the unique fixed point $z_f$ of $f$ in $\mathrm{int}(X)$ (see (C1) in \textsc{Figure~1}).
\end{itemize}
For any $\varepsilon>0$, there exists $N\geq1$ such that for each  integer $n\geq N$, there exists a pullback $B$ of $P$ by $f^{n}$ that is contained in $P_{-1}$ such that 
\begin{equation}\tag{a}
\sup_{B}\left\|\frac{1}{n}S_{n}\vec\phi-\int\vec\phi d\mu_{-1}\right\|<\varepsilon.
\end{equation}
If moreover $f(c)\notin P$, 
then the pullback $B$ is diffeomorphic and satisfies
\begin{equation}\tag{b}
 \sup_{B}\left|\frac{1}{n}\log|(f^{n})'|-\chi^+(\mu_{-1})\right|<\varepsilon.
\end{equation}
 \end{lemma}
 
 \begin{proof}
 In case (i),
 $f(c)$ is not contained in the closed interval bordered by $x_f$ and $c$.
 Without loss of generality we may assume $x_f<c$. 
 We define a sequence $\{x_n\}_{n=2}^\infty$ in $(x_f,c)$ inductively by
  $x_2\in f^{-1}(c)$  and
 $x_{n+1}\in f^{-1}(x_n)$ for every $n\geq2$.
 From the minimum principle for negative Schwarzian unimodal maps \cite{dMevSt93}, this sequence is well-defined and satisfies $\lim_{n\to\infty}x_n= x_f$. The interval $[x_f,x_n]$ is mapped by $f^{n}$ homeomorphically onto $[x_f,f(c)]$. Let $B$ denote the pullback of $P$ by $f^{n}$ that is contained in $[x_f, x_n]$. If $f(c)\notin P$, then $B$ is a diffeomorphic pullback. If $n$ is sufficiently large, then (a) (b) follow from the bounded distortion near $x_f$. 
 
A proof for case (ii) is analogous with $c$ replaced by $z_f$. The proof of Lemma~\ref{katok-initial} is complete. \end{proof}

\subsection{Proof of Proposition~\ref{low}}\label{endow}
If $L_0=X$ then $f$ is non-renormalizable, and the conclusion of Proposition~\ref{low} was already shown in the proof of \cite[Proposition~2.1]{CRT}.
If $L_0$ is a singleton, then $f(c)=c$ and any element of $\mathcal M(f)$ is a convex combination of $\mu_{-1}$ and the unit point mass at $c$. Hence, it is not difficult to show the conclusion of Proposition~\ref{low} by slightly modifying the argument in Case~2 below.

We assume $L_0$ is strictly contained in $X$ and not a singleton.
Let $\mu\in\mathcal M(f)$ satisfy $\int\vec{\phi} d\mu>\vec\alpha$.
Let $\varepsilon>0$ be small enough so that
\begin{equation}\label{low-new}\int\vec{\phi}d\mu-\vec\varepsilon>\vec{\alpha}.\end{equation}
  In what follows we treat four cases separately.
We start with the case where $f$ is infinitely renormalizable. Arguments for the remaining cases will proceed much in parallel to the infinitely renormalizable case.
\medskip

\noindent{\bf Case 1:} $m(f)=\infty$.
Let $m_*$, $N_*$ be positive integers for which the conclusion of Lemma~\ref{deep} holds with $\varepsilon$ replaced by $\varepsilon/3$. 
By the continuity of the Lyapunov exponent in Lemma~\ref{rate},
 we may assume with no loss of generality that
there exist 
constants $\rho_{-1},\ldots,\rho_{m_*}\in(0,1)$ that add up to $1$, 
and for each $m\in\{-1,\ldots,m_*-1\}$
a measure $\mu_m\in\mathcal M_{m}(f)$ such that 
\[\mu=\rho_{-1}\cdot\mu_{-1}+\rho_{0}\cdot\mu_{0}+\cdots+\rho_{m_*-1}\cdot\mu_{m_*-1}+\rho_{m_*} \cdot\mu_\infty.
\]
For each $m\in\{0,\ldots,m_*-1\}$ we take a non-degenerate closed interval $P_m$ so that the following hold:
if $p_{m+1}/p_{m}\neq 2$ then  $P_m\in\mathscr{P}_{m}$, $P_m\subset L_{m}$ and $\mu_m(P_m)>0$; 
if 
 $p_{m+1}/p_{m}= 2$ then  
 $P_m\subset L_{m}$, $P_m$ contains the periodic point in $\partial J_{m+1}$
 in its interior and satisfies
 $P_{m}\cap\{f(c),\ldots,f^{M_{m-1}-1}(c)\}=\emptyset$ (recall \eqref{Mm-def}).
 We set $P_{m_*}=J_{m_*}$.

For each $m\in\{-1,\ldots, m_*-1\}$ and for sufficiently large $q\in\mathbb N$, we define a collection
$\mathscr{B}_{p_{m}q}(P_m,P_{m+1})$ of non-degenerate closed subintervals of $P_m$ as follows:

\begin{itemize}
\item $m=-1$:
$\mathscr{B}_{p_{-1}q}(P_{-1},P_{0})$ is the singleton consisting of the pullback of $P_0$ 
by $f^{p_{-1}q}$ that is obtained by applying Lemma~\ref{katok-initial} with $P=P_0$. Since $f(c)\notin P_0$ by the choice of $P_0$, this pullback is diffeomorphic by Lemma~\ref{katok-initial}.

\item $m\geq0$:
if $p_{m+1}/p_{m}\neq 2$ then 
 $\mathscr{B}_{p_{m}q}(P_m,P_{m+1})$ is the collection of the diffeomorphic pullbacks of $P_{m+1}$ by $f^{p_{m}q}$ contained in $P_m$ that are obtained by applying Lemma~\ref{katok} with $\mu=\mu_m$, $P=P_m$, $P'=P_{m+1}$ and $\varepsilon$ replaced by $\varepsilon/2$;
if $p_{m+1}/p_{m}=2$ then 
$\mathscr{B}_{p_{m}q}(P_m,P_{m+1})$ is the singleton consisting of the diffeomorphic pullback of
$P_{m+1}$ by $f^{p_{m}q}$ that is contained in $P_m$ and closest to the periodic point in $\partial J_{m+1}$ among all such diffeomorphic pullbacks.
\end{itemize}

We claim that
for every
$-1\leq m\leq m_*-1$ the following hold provided $q$ is sufficiently large:
\begin{align}\label{tag1}
\left|\frac{1}{p_{m}q}\log\# \mathscr{B}_{p_{m}q}(P_m,P_{m+1})-h(\mu_m)\right|&<\frac{\varepsilon}{2};\\
\label{tag2}
\sup_{B\in\mathscr{B}_{p_{m}q}(P_m,P_{m+1})}\sup_{B}\left\|\frac{1}{p_{m}q}S_{p_{m}q}\vec\phi-\int
\vec\phi  d\mu_m\right\|&<\frac{\varepsilon}{2};\\
\label{tag3}
\sup_{B\in\mathscr{B}_{p_{m}q}(P_m,P_{m+1})}\sup_{B}\left|\frac{1}{p_{m}q}\log|(f^{p_{m}q})'|-\chi^+(\mu_m)\right|&<\frac{\varepsilon}{2}.
\end{align}
Indeed,
for $m=-1$,
 \eqref{tag1} is trivial since $\mu_{-1}$ is the unit point mass at the fixed point of $f$ in $\partial X$, and \eqref{tag2}, \eqref{tag3} are consequences of Lemma~\ref{katok-initial}.
Let $m\geq0$. If 
 $p_{m+1}/p_{m}=2$ then 
 $\mu_m$ is the empirical measure on the orbit of the hyperbolic repelling periodic point in  $\partial J_{m+1}$.
 Therefore, \eqref{tag1}, \eqref{tag2} and \eqref{tag3} hold in these two cases.
If $p_{m+1}/p_{m}\neq 2$ then \eqref{tag1}, \eqref{tag2} and \eqref{tag3} 
 are consequences of Lemma~\ref{katok}.

We write $n\in\mathbb N$ as a linear combination
of non-negative integers $q_{-1},\ldots,q_{m_*-1},r$ in the form
\[
n= p_{-1}q_{-1}+p_0q_0+\cdots+p_{m_*-1}q_{m_*-1}+r,\]
with 
$|p_{m}q_m- \rho_m n|\le  1$ for $-1\leq m\leq m_*-1$
and $|r- \rho_{m_*} n|\le  1.$
In what follows we assume $n$ is sufficiently large so that $r\geq N_*$, 
and for each $-1\leq m\leq m_*-1$,
$\mathscr{B}_{p_{m}q_m}(P_m,P_{m+1})$ satisfies \eqref{tag1}, \eqref{tag2}, \eqref{tag3}.
Lemma~\ref{deep}(b) gives
\begin{equation}\label{eq-low-new}\sup_{K_{m_*}}\left\|\frac{1}{r}S_r\vec\phi-\int\vec\phi d\mu_\infty\right\|<\frac{\varepsilon}{2}.\end{equation}

Let $\mathscr{A}_n$ denote the collection of the pullbacks of $P_{m_*}$ by maps
$h_{m_*-1}\circ\cdots \circ  h_{-1}$,
where each $h_m$ 
is a diffeomorphism of the form $f^{p_{m}q_m}|_{B}$ with $B\in\mathscr{B}_{p_{m}q_m}(P_m,P_{m+1})$. 
By $\#\mathscr{A}_n=\prod_{m=-1}^{m_*-1}\#\mathscr{B}_{p_{m}q_m}(P_m,P_{m+1})$, \eqref{tag1} and $h(\mu_{\infty})=0$, we have
\begin{equation}\label{entropy}
\left| \frac{1}{n}\log\#\mathscr{A}_n-h(\mu) \right|<\varepsilon.
\end{equation}
A similar argument to the proofs of (b) and (c) in Lemma~\ref{katok} based on
 \eqref{tag2} and \eqref{eq-low-new} shows
\begin{equation}\label{eq:tima_average_estiamte}
\sup_{A\in\mathscr{A}_n}\sup_A\left\| \frac{1}{n}S_{n}\vec\phi-\int\vec\phi  d\mu \right\|<\varepsilon.
\end{equation}
From $\chi^+(\mu_\infty)=0$, 
$\chi^+(\mu)=\sum_{m=-1}^{m_*-1}\rho_m
\chi^+(\mu_m)$ by Lemma~\ref{lyap-affine} and \eqref{tag3}, we obtain
\begin{equation}\label{eq:derivative_estimate}
\sup_{A\in\mathscr{A}_n}\sup_A\left| \frac{1}{n-r}\log|(f^{n-r})'|-\chi^+(\mu) \right|<\frac{\varepsilon}{2}.
\end{equation}
The inequality in Proposition~\ref{low}(a) follows from \eqref{entropy}.
 From \eqref{low-new} and \eqref{eq:tima_average_estiamte}, the elements of $\mathscr{A}_n$ are contained in $A_{n}(\vec{\phi},\vec{\alpha})$. The inequality in Proposition~\ref{low}(b)
 follows from \eqref{eq:derivative_estimate}. 
  \medskip

 \noindent{\bf Case 2:} {\it $m(f)<\infty$ and ${\rm int}(J_{m(f)})$ contains a two-sided attracting  periodic point (Remark~\ref{rem:C}(C-I)).}
 Recall that
  $\delta_{O(f)}$ denotes the element of $\mathcal M(f)$ supported on $O(f)$. With no loss of generality
 we may assume
 there exist 
constants $\rho_{-1},\ldots,\rho_{m(f)}\in(0,1)$ adding up to $1$ 
and for each $m\in\{-1,\ldots,m(f)-1\}$ a measure $\mu_m\in\mathcal M_{m}(f)$ 
such that 
\[\mu=\rho_{-1}\cdot\mu_{-1}+\rho_0\cdot\mu_0+
\cdots+\rho_{m(f)}\cdot\delta_{O(f)}.
\]
For each  $m\in\{0,\ldots,m(f)\}$ we take a non-degenerate closed interval $P_m$ so that the following hold: if $m<m(f)$ and $p_{m+1}/p_{m}\neq 2$ then $P_m\in\mathscr{P}_{m}$,
$P_m\subset L_{m}$ and $\mu_m(P_m)>0$; 
if $m<m(f)$ and $p_{m+1}/p_{m}= 2$ then 
$P_m\subset L_m$, $P_m$  contains the periodic point in $\partial J_{m+1}$
 in its interior and satisfies
 $P_{m}\cap\{f(c),\ldots,f^{M_{m-1}-1}(c)\}=\emptyset$;
 $P_{m(f)}$ is contained in the connected component of the immediate basin of the attracting periodic orbit that contains $c$.
 In the case of $m(f)=0$, we take $P_{m(f)}$ to be a closed interval contained in
 the open interval bordered by $x_f$ and the unique fixed point in $z_f$.

For each $m\in\{-1,\ldots, m(f)-1\}$ and for sufficiently large $q\in\mathbb N$, we define a collection
$\mathscr{B}_{p_{m}q}(P_m,P_{m+1})$ of non-degenerate closed subintervals of $P_m$ as follows:

\begin{itemize}
\item $m=-1$: $\mathscr{B}_{p_{-1}q}(P_{-1},P_{0})$ is the singleton consisting of the pullback of $P_0$ 
by $f^{p_{-1}q}$ that is obtained by applying Lemma~\ref{katok-initial} with $P=P_0$. Since $f(c)\notin P_0$ by the choice of $P_0$, this pullback is  diffeomorphic by Lemma~\ref{katok-initial}.

\item $m\geq0$: 
if  $p_{m+1}/p_{m}\neq2$
then $\mathscr{B}_{p_{m}q}(P_m,P_{m+1})$ is the collection of the diffeomorphic pullbacks of $P_{m+1}$ by $f^{p_{m}q}$ contained in $P_m$ that are obtained by applying Lemma~\ref{katok} with $\mu=\mu_m$, $P=P_m$, $P'=P_{m+1}$ and $\varepsilon$ replaced by $\varepsilon/2$; 
if $p_{m+1}/p_{m}=2$ 
then $\mathscr{B}_{p_{m}q}(P_m,P_{m+1})$ is the singleton consisting of the diffeomorphic pullback of $P_{m+1}$ by $f^{p_{m}q}$ that is contained in $P_m$ and closest to the periodic point in $\partial J_{m+1}$ among all such diffeomorphic pullbacks. 
\end{itemize}
Note that the second case $m\geq0$ occurs only if $m(f)\geq1$.

Let $n\geq1$ be an integer, and write it as a linear combination of positive integers $q_1,\ldots,q_{m(f)-1}$ in the form
\[
n= p_{-1}q_{-1}+\cdots+p_{m(f)-1}q_{m(f)-1}+r,\]
with $|p_{m}q_m- \rho_m n|\le 1$ for $-1\leq m\leq m(f)-1$ and
 $|r- \rho_{m(f)} n|\le  1.$ In what follows
 we assume $n$ is sufficiently large so that for each $m\in\{-1,\ldots,m(f)-1\}$,
$\mathscr{B}_{p_{m}q_m}(P_m,P_{m+1})$ satisfies \eqref{tag1}', \eqref{tag2}' and \eqref{tag3}' which are exactly the same as \eqref{tag1}, \eqref{tag2} and \eqref{tag3} respectively under the different assumption on $f$, and 
 \begin{equation}\label{conv-uniform}
\sup_{P_{m(f)}}\left\|\frac{1}{r }S_{r }\vec\phi-\int\vec\phi
d\delta_{O(f)}\right\|<\frac{\varepsilon}{2}.
\end{equation} 

Let $\mathscr{A}_n$ denote the collection of the pullbacks of $P_{m(f)}$ by maps
$h_{m(f)-1}\circ\cdots \circ  h_0\circ h_{-1}$
where $h_m$ for $-1\leq m\leq m(f)-1$ is a diffeomorphism of the form $f^{p_{m}q}|_{B}$ with $B\in\mathscr{B}_{p_{m}q}(P_m,P_{m+1})$. 
From $\#\mathscr{A}_n=\prod_{m=-1}^{m(f)-1}\#\mathscr{B}_{p_{m}q_m}(P_{m},P_{m+1})$ and 
\eqref{tag1}' we obtain Proposition~\ref{low}(a).
    From \eqref{low-new}, \eqref{tag2}' and \eqref{conv-uniform}, the elements of $\mathscr{A}_n$ are contained in $A_{n}(\vec{\phi},\vec{\alpha})$. Since the measure $\delta_{O(f)}$ is supported on $O(f)$,  
     we have $\chi^+(\delta_{O(f)})=0$ from the definition \eqref{d-lyap} of the Lyapunov exponent and $\chi^+(\mu)=\sum_{m=-1}^{m(f)-1}\rho_m\chi^+(\mu_m)$ by Lemma~\ref{lyap-affine}.
    Hence \eqref{tag3}' implies Proposition~\ref{low}(b). 
\medskip

\noindent{\bf Case~3:} {\it $m(f)<\infty$ and $\partial J_{m(f)}$ contains a one-sided attracting  periodic point (Remark~\ref{rem:C}(C-II)).} 
This case is treated by a slight modification of the argument for Case~2.
\medskip
 
\noindent{\bf Case~4:} {\it $m(f)<\infty$ and $f$ has 
 no attracting periodic orbit.} 
 We may assume
 there exist 
constants $\rho_{-1},\ldots,\rho_{m(f)}\in(0,1)$ adding up to $1$, 
and for each $m\in\{-1,\ldots,m(f)\}$ a measure $\mu_m\in\mathcal M_{m}(f)$ 
such that 
\[\mu=\rho_{-1}\cdot\mu_{-1}+\rho_0\cdot\mu_0+\cdots+\rho_{m(f)}\cdot\mu_{m(f)}.
\]

Set $g=f^{p_{m(f)}}|_{L_{m(f)}}$, and
let $\nu$ denote the normalized restriction of $\mu_{m(f)}$ to $L_{m(f)}$.
Let $h(g,\nu)$, $\chi^+(g,\nu)$ denote the entropy and Lyapunov exponent of the measure $\nu$ with respect to $g$. By Lemma~\ref{lyap-affine} we have $\chi^+(g,\nu)=\int\log|Dg|d\nu$.
As in $\S$\ref{metric},
 $g$ is topologically exact.
From the proof of \cite[Proposition~2.1]{CRT}, 
for all sufficiently large $q\geq1$
there exist a closed interval $P_{m(f)}$ in $L_{m(f)}$ and a finite collection $\mathscr{B}_{p_{m(f)}q}(P_{m(f)},P_{m(f)})$
of diffeomorphic pullbacks of $P_{m(f)}$ by $g^q$ that are contained in $P_{m(f)}$ such that
 \begin{equation}\label{tag111}
\begin{split} \#\mathscr{B}_{p_{m(f)}q}(P_{m(f)},P_{m(f)})&>\exp\left(\left( h(g,\nu)-\frac{\varepsilon}{2}\right)q\right)\\
&=\exp\left(\left(p_{m(f)}h(\mu_{m(f)})-\frac{\varepsilon}{2}\right)q\right),\end{split}\end{equation}
 and for every $B\in\mathscr{B}_{p_{m(f)}q}(P_{m(f)},P_{m(f)})$,
   \begin{equation}\label{tag222}\begin{split}\sup_{ B}&\left\|\frac{1}{q}\sum_{k=0}^{q-1}\vec \phi\circ g^k-\int\vec\phi d\nu\right\|=\sup_{ B}\left\|\frac{1}{q}S_{p_{m(f)}q}\vec\phi-p_{m(f)}\int\vec\phi d\mu_{m(f)}\right\|<\frac{\varepsilon}{2},\end{split}\end{equation} 
 and
  \begin{equation}\label{tag333}
  |B|>\exp\left(-\left(\chi^+(g,\nu)+\frac{\varepsilon}{2}\right)q\right)=\exp\left(-\left(p_{m(f)}\chi^+(\mu_{m(f)})+\frac{\varepsilon}{2}\right)q\right).
  \end{equation}

For each $m\in\{0,\ldots, m(f)-1\}$, we take a non-degenerate closed interval $P_m$ such that the following hold:
if $p_{m+1}/p_{m}\neq 2$ then  $P_m\in\mathscr{P}_{m}$,
$P_m\subset L_{m}$ and $\mu_m(P_m)>0$:
if $p_{m+1}/p_{m}= 2$ then
$P_m\subset L_m$, $P_m$ contains the periodic point in $\partial J_{m+1}$
 in its interior and satisfies
 $P_{m}\cap\{f(c),\ldots,f^{M_{m-1}-1}(c)\}=\emptyset$.

 For each $m\in\{-1,\ldots, m(f)-1\}$ and for sufficiently large $q\in\mathbb N$, we define a collection
$\mathscr{B}_{p_{m}q}(P_m,P_{m+1})$ of non-degenerate closed subintervals of $P_m$ as follows:

\begin{itemize}
\item $m=-1$:  $\mathscr{B}_{p_{-1}q}(P_{-1},P_{0})$ is the singleton consisting of the pullback of $P_0$ 
by $f^{p_{-1}q}$ that is obtained by applying Lemma~\ref{katok-initial} with $P=P_0$. Since $f(c)\notin P_0$ by the choice of $P_0$, this pullback is  diffeomorphic by Lemma~\ref{katok-initial}.

\item $m\geq0$: if $p_{m+1}/p_{m}\neq 2$ then  $\mathscr{B}_{p_{m}q}(P_m,P_{m+1})$ is the collection of the  diffeomorphic pullbacks of $P_{m+1}$ by $f^{p_{m}q}$ contained in $P_m$ obtained by applying Lemma~3.3 with $\mu=\mu_m$, $P=P_m$, $P'=P_{m+1}$ and $\varepsilon$ replaced by $\varepsilon/2$;
if $p_{m+1}/p_{m}=2$ then 
 $\mathscr{B}_{p_{m}q}(P_m,P_{m+1})$ is the singleton consisting of the diffeomorphic pullback of $P_{m+1}$ by $f^{p_{m}q}$ that is contained in $P_m$ and closest to the periodic point in $\partial J_{m+1}$ among all such diffeomorphic pullbacks.
\end{itemize}
Note that the second case $m\geq0$ occurs only if $m(f)\geq1$.

Let $n\geq1$ be an integer that is written as a linear combination of non-negative integers $q_{-1},\ldots,q_{m(f)},r$ in the form 
\[n= p_{-1}q_{-1}+\cdots+p_{m(f)-1}q_{m(f)-1}+p_{m(f)}q_{m(f)}+r,\]
with $|p_{m}q_m- \rho_m n|\le  1$ for $-1\leq m\leq m(f)$ and $0\leq r<p_{m(f)}$. We assume $n$ is sufficiently large so that
for each $m$,  
 $\mathscr{B}_{p_{m}q_m}(P_{m},P_{m+1})$ satisfies  
\eqref{tag1}'', \eqref{tag2}'' and \eqref{tag3}'' which are exactly the same as \eqref{tag1}, \eqref{tag2} and \eqref{tag3} respectively under the different assumption on $f$,
 and
 $\mathscr{B}_{p_{m(f)}q_{m(f)}}(P_{m(f)},P_{m(f)})$ satisfying
 \eqref{tag111}, \eqref{tag222} and \eqref{tag333}
 exists.
  We set $P_{m(f)+1}=P_{m(f)}$ for convenience.
 Define $\mathscr{A}_n$ to be the collection of  the pullbacks of $P_{m(f)}$ by 
maps
$ h_{m(f)}\circ\cdots \circ  h_0\circ h_{-1}$,
where each $h_m$, $-1\leq m\leq m(f)$ 
is a diffeomorphism of the form $f^{p_{m} q_m}|_{B}$ with $B\in\mathscr{B}_{p_{m}q_m}(P_{m},P_{m+1})$. From $\#\mathscr{A}_n=\prod_{m=-1}^{m(f)}\#\mathscr{B}_{p_{m}q_m}(P_{m},P_{m+1})$, 
\eqref{tag1}'' and
\eqref{tag111} 
we obtain the inequality in Proposition~\ref{low}(a).
  From \eqref{low-new}, \eqref{tag2}'' and \eqref{tag222}, the elements of $\mathscr{A}_n$ are contained in $A_{n}(\vec{\phi},\vec{\alpha})$. By Lemma~\ref{lyap-affine} we have $\chi^+(\mu)=\sum_{m=-1}^{m(f)}\rho_m\chi^+(\mu_m)$. From \eqref{tag3}'' and
\eqref{tag333} we obtain the inequality in Proposition~\ref{low}(b). 
The proof of Proposition~\ref{low} is complete. 
\qed

\section{Large deviations upper bound}\label{upbound}
In this section we complete the proof of the upper bound in Theorem~A.
In $\S$\ref{key} we show that this follows from a key upper bound stated in Proposition~\ref{up}.
The rest of this section is dedicated to a proof of Proposition~\ref{up}.

\subsection{Key upper bound}\label{key}

Let $\ell\in\mathbb N$,   $\vec{\phi}\in C(X)^{ \ell}$, $\vec{\alpha}\in\mathbb R^{\ell}$.   
Define
\[
\cA_n(\vec{\phi},\vec{\alpha})=
\left\{x\in X\colon \int\vec\phi d\delta_x^n\geq\vec\alpha\right\}.
\] 
\begin{prop}\label{up}
Let $f\colon X\to X$ be an $S$-unimodal map with a non-flat critical point.
Let $\ell\in\mathbb N$, $\vec{\phi}\in C(X)^{ \ell}$, $\vec{\alpha}\in\mathbb R^{\ell}$.
For any $\varepsilon>0$ there exists $N\in\mathbb N$ such that if
$n\geq N$ and $\cA_n(\vec{\phi},\vec{\alpha})\neq\emptyset$, then  there 
exists a measure $\mu\in\mathcal M(f)$ such that
\begin{align}\tag{a}
    |\cA_n(\vec{\phi},\vec{\alpha})|&\leq  \exp\left((F(\mu)+\varepsilon)n\right)\quad and\\
\tag{b}\int\vec\phi  d\mu&>\vec\alpha-\vec\varepsilon.
\end{align}
\end{prop}

We finish the proof of the upper bound in Theorem~A assuming Proposition~\ref{up}.

\begin{proof}[Proof of the upper bound in Theorem~A]
Let $\mathcal C$ be a non-empty closed  
subset of $\mathcal M$.
Let $\mathcal G$ be an arbitrary open set containing $\mathcal C$.
Since $\mathcal M$ is metrizable and $\mathcal C$ is compact, we can choose $\varepsilon>0$ and finitely many closed sets $\mathcal{C}_1,\ldots,\mathcal C_s$ of the form 
$
\mathcal C_i=\{\mu\in\mathcal M\colon \int\vec\phi_i d\mu\geq\vec\alpha_i\}$
with $\ell_i\in\mathbb N$, $\vec\phi_i\in C(X)^{ \ell_i}$, $\vec\alpha_i\in\mathbb R^{\ell_i}$
for $1\leq i\leq s$
so that 
\begin{equation}\label{eq:inclusion}
\mathcal C \subset \bigcup_{i=1}^s \mathcal C_i \subset 
\bigcup_{i=1}^s \mathcal C_i(\varepsilon)\subset \mathcal G,
\end{equation}
where
$\mathcal C_i(\varepsilon)=\{\mu\in\mathcal M\colon \int\vec\phi_i  d\mu>\vec\alpha_i-\vec{\varepsilon}\}$.
Since $F(\mu)\leq -I(\mu)$ for  $\mu\in\mathcal M$,
Proposition~\ref{up} implies 
\[
\limsup_{n\to\infty}\frac{1}{n}\log |\{x\in X\colon\delta_x^n\in\mathcal C_i\}|\leq -\inf_{\mathcal C_i(\varepsilon)}I
+\varepsilon\ \text{ for }1\leq i\leq s.\]
These inequalities and \eqref{eq:inclusion} give
\[
\limsup_{n\to\infty}\frac{1}{n}\log 
|\{x\in X\colon\delta_x^n\in \mathcal C\}|
 \le \max_{1\leq i\leq s}  
\left(-\inf_{\mathcal C_i(\varepsilon)}I\right)+\varepsilon \le
-\inf_{\mathcal G}I+\varepsilon.
\]
Since $\varepsilon>0$ is arbitrary and
$\mathcal G$ is an arbitrary open set containing 
 $\mathcal C$, it follows that
\[
\limsup_{n\to\infty}\frac{1}{n}\log |\{x\in X\colon\delta_x^n\in \mathcal C\}|\leq
\inf_{\mathcal G \supset \mathcal C} (-\inf_{\mathcal G} I)=
-\inf_{\mathcal C}I,\]
as required. The last equality is due to the lower semicontinuity of $I$.
\end{proof}

\subsection*{(Standing hypotheses from $\S$\ref{escape} to $\S$\ref{m-cluster}):}
The rest of this section is entirely dedicated to the proof of Proposition~\ref{up}. We assume
 $f\colon X\to X$ is an $S$-unimodal map with a non-flat critical point $c$, and $\ell\in\mathbb N$, $\vec{\phi}\in C(X)^{\ell}$, 
$\vec{\alpha}\in\mathbb R^{\ell}$ 
in the statement of Proposition~\ref{up} are fixed till the end of $\S$\ref{m-cluster}.

\subsection{Escape estimate in a single renormalization cycle}\label{escape}
Suppose $m(f)\geq1$, and
let $m$ be an integer with $0\leq m\leq m(f)-1$. For $\vec\beta\in\mathbb R^{\ell}$, 
$P\subset K_{m,m+1}$ and $n\in\mathbb N$, define
\[ A_{n}(\vec{\phi},\vec{\beta},P)=
\cA_{n}(\vec{\phi},\vec{\beta})\cap P\cap\bigcap_{k=0}^{n-1}f^{-k}(K_{m,m+1}).\]
This subsection is dedicated to a proof of the next lemma.
\begin{lemma}\label{hyperbolic}
Suppose $m(f)\geq1$
and let $0\leq m\leq m(f)-1$ be an integer.
For any $\varepsilon>0$ there exists $N\geq1$ such that
if $\vec\beta\in\mathbb R^{\ell}$, $P\in\tilde{\mathscr{P}}_m$, $n\geq N$ 
 satisfy $P\subset K_{m,m+1}$ and
$A_{n}(\vec{\phi},\vec{\beta},P)\neq\emptyset$,
 then there exists a measure 
$\mu\in\mathcal M(f)$ such that 
\begin{align}\tag{a}
|A_{n}(\vec{\phi},\vec{\beta},P)|&\leq \exp\left((F(\mu)+\varepsilon)n\right)|P|\quad and\\
\tag{b}
\int\vec\phi d\mu&>\vec\beta-\vec\varepsilon.
\end{align}
\end{lemma}
\begin{proof}
Let $\varepsilon>0$ and let $P\in\tilde{\mathscr{P}}_m$. Since $P\subset K_{m,m+1}$, 
if $p_{m+1}/p_m= 2$ then $f(P)$ contains a point from the orbit of the periodic point in $\partial J_m$. Then we can verify (a) (b) easily,
taking the measure $\mu$ to be the equidistribution on this periodic orbit.
For the rest of the proof of Lemma~\ref{hyperbolic},
we suppose $p_{m+1}/p_m\neq 2$, and
split the proof  into two cases,
either $P\in\mathscr{P}_m$
or 
$P\in \tilde{\mathscr{P}}_m\setminus \mathscr{P}_m$.
\medskip

\noindent{\bf Case 1: } $P\in\mathscr{P}_m$.
We perform an escape estimate relative to $P$.
Let $n\in\mathbb N$ and suppose $A_{n}(\vec{\phi},\vec{\beta},P)\neq\emptyset$. 
Let $\mathscr{P}_{m,n}(P)$ denote the collection of the diffeomorphic pullbacks of elements of $\mathscr{P}_m$ by $f^{n}$ that are contained in $P$
and intersect $\cA_{n}(\vec{\phi},\vec{\beta})$. Note that
every element of $\mathscr{P}_{m,n}(P)$ is contained in $\bigcap_{k=0}^{n-1} f^{-k}(K_{m,m+1})$, and the union of elements of
$\mathscr{P}_{m,n}(P)$ contains $A_{n}(\vec{\phi},\vec{\beta},P)$.

\begin{sublemma}\label{extend}
 For any $\varepsilon>0$
there exists $N\geq1$ such that for every
$Q\in\mathscr{P}_m$,
  every $n\geq N$ and every diffeomorphic pullback 
$W$ of $Q$ by $f^{n}$ 
contained in $P$, there exist 
$k\in\mathbb Z$ with $|k|\leq p_m-1$ and a diffeomorphic pullback $Y$ of $P$ by $f^{n+k+M_m }$ that is contained in $W$ and satisfies \[|Y|\geq \exp\left(-\varepsilon n\right)|W|.\]
\end{sublemma}
\begin{proof}
Since $P$, $Q\in\mathscr{P}_m$, there exist $k_P$, $k_Q\in\{0,\ldots,p_m-1\}$ such that $f^{k_P}(P)$ and $f^{k_Q}(Q)$ are elements of $\mathscr{P}_m$ contained in $L_m$. 
By the definition of $M_m$ in \eqref{Mm-def}, there exists 
a diffeomorphic pullback $W_P$ of $f^{k_P}(P)$ by $f^{M_m }$ that is contained in $f^{k_Q}(Q)$. 
Let $W'_P$ be the diffeomorphic pullback of $W_P$ by $f^{k_Q}$ contained in $Q$. This is a diffeomorphic pullback of $P$ by $f^{M_m+k_Q-k_P}$. Let $Y$ be the pullback of $W'_P$ by $f^n|_{W}$. 
Then $Y$ is a diffeomorphic pullback of $P$ by $f^{n+M_m+k_Q-k_P}$.

By
Proposition~\ref{koebe} with $\varepsilon$ replaced by $\varepsilon/2$, if $n$ is sufficiently large we have 
\[\begin{split}
\frac{|Y|}{|W|}&\geq \exp\left(-\frac{\varepsilon n}{2}\right)\frac{|f^{n}(Y)|}{|f^{n}(W)|}=\exp\left(-\frac{\varepsilon n}{2}\right)\frac{|W_P'|}{|Q|}\\
&\geq \exp\left(-\frac{\varepsilon n}{2}\right)C\frac{|W_{P}|}{|f^{k_Q}(Q)|}\geq \exp\left(-\varepsilon n\right)
\end{split}\]
as required, where $C>0$ is a constant independent of $P$ and $Q$.
 \end{proof}

In view of
Sublemma~\ref{extend},
for each $W\in \mathscr{P}_{m,n}(P)$
we fix an integer $k(W)$ with $|k(W)|\leq p_m-1$ and a  diffeomorphic pullback $Y_W$ of $P$ by $f^{n+k(W)+M_m}$ that is contained in $W$ and satisfies
$|Y_W|\geq \exp\left(-\varepsilon n/3\right)|W|.$ 
Set
\[
\mathscr{P}_{m,n,k}(P)=\{W\in \mathscr{P}_{m,n}(P)\colon k(W)=k\}.\]
If $n$ is sufficiently large, then 
for all $-p_m+1\leq k\leq p_m-1$ we have
\begin{equation}
\label{bir}\bigcup_{W\in \mathscr{P}_{m,n,k}(P)}Y_W\subset A_{n+k+M_m}\left(\vec{\phi},\vec{\beta}-\frac{1}{2}\vec{\varepsilon}\right),
\end{equation}
and
\begin{equation}\label{bir-new}
|A_{n}(\vec{\phi},\vec{\alpha},P)|
\leq\sum_{W\in \mathscr{P}_{m,n}(P)}|W|\leq 
\exp\left(\frac{\varepsilon n}{3}\right)\sum_{W\in \mathscr{P}_{m,n}(P)}|Y_W|.\end{equation}
Choose $\underline{k}\in\{-p_m+1,\ldots,p_m-1\}$ such that 
\begin{equation}\label{bir-new-new}
\sum_{W\in \mathscr{P}_{m,n}(P)}|Y_W|\leq(2p_m-1)\sum_{W\in \mathscr{P}_{m,n,\underline{k}}(P)}|Y_W|,\end{equation}
and set $\tilde n=n+\underline{k}+M_m$. 
The restriction of $f^{\tilde n}$
to $\bigcup_{W\in \mathscr{P}_{m,n,\underline{k}}(P)} Y_W$
induces a fully branched expanding Markov map onto $P$
with finitely many  branches.
Let $\Delta\subset\Gamma_m$ denote its maximal invariant set, namely
\[\Delta=\bigcap_{j=0}^\infty (f^{\tilde n})^{-j}\left(\bigcup_{W\in 
\mathscr{P}_{m,n,\underline{k}}(P)}Y_W\right).\]
The map
$f^{\tilde n}|_{\Delta}\colon \Delta\to \Delta$
is topologically conjugate to the full shift $\sigma\colon\Sigma\to\Sigma$ over the finite alphabet $\mathscr{P}_{m,n,\underline{k}}(P)$. We write  $\pi\colon\Sigma\to\Delta$ for the conjugacy map, and define
the induced potential $\Phi\colon\Sigma\to\mathbb R$ by $\Phi(\omega)=-\log|(f^{\tilde n})'(\pi(\omega))|$.
From Lemma~\ref{u-decay},
$\Phi$ is continuous with respect to the shift metric. 
The  variational principle \cite[p.40, 2.17]{Bow75} gives
\begin{equation}\label{eq:variational_principle}
\sup_{\tilde\nu\in\mathcal M(\sigma)}\left(h(\sigma,\tilde\nu)+
\int\Phi d\tilde\nu\right)=
\lim_{j\to\infty}\frac{1}{j}\log\left(\sum_{\omega\in\sigma^{-j}(\omega')}
\exp{\sum_{i=0}^{j-1}\Phi(\sigma^i\omega)}\right)
\end{equation}
for any fixed $\omega'\in \Sigma$,  where $\mathcal M(\sigma)$ denotes the space of $\sigma$-invariant Borel probability measures endowed with the weak* topology, and $h(\sigma,\tilde\nu)$ denotes the measure-theoretic entropy of $\tilde\nu$ with respect to $\sigma$.
By the distortion estimates in Proposition~\ref{koebe}, if $n\geq1$ is sufficiently large then 
for any $W\in \mathscr{P}_{m,n,\underline{k}}(P)$ and any $\omega\in\Sigma$
 such that $\pi(\omega)\in Y_W$ we have
\[
\exp\left(\Phi(\omega)\right)\geq 
\exp\left(-\frac{\varepsilon n}{2} \right)\frac{|Y_W|}{|P|}.\]
 Hence, the series inside the logarithm in \eqref{eq:variational_principle} is bounded from below as follows:
 \[\begin{split}\sum_{\omega\in \sigma^{-j}(\omega')}\exp\left(
 \sum_{i=0}^{j-1}\Phi(\sigma^i\omega)\right)
&\geq\left(\inf_{\omega'\in \Sigma}\sum_{\omega\in \sigma^{-1}(\omega')}\exp\left(\Phi(\omega)\right)\right)^j\\
&\geq\left(
\exp\left(-\frac{\varepsilon n}{2} \right)\sum_{W\in
\mathscr{P}_{m,n,\underline{k}}(P)}\frac{|Y_W|}{|P|}\right)^j.\end{split}\]
Taking logarithms of both sides, dividing by $j$ and letting $j\to \infty$, we have
\[
\lim_{j\to\infty}\frac{1}{j}\log\left(\sum_{\omega\in \sigma^{-j}(\omega')}
\exp\left(\sum_{i=0}^{j-1}\Phi(\sigma^i\omega)\right)\right)
\geq\log\sum_{W\in \mathscr{P}_{m,n,\underline{k}}(P) }\frac{|Y_W|}{|P|}-\frac{\varepsilon n}{2}.
\]
Plugging this inequality into  \eqref{eq:variational_principle} yields
\begin{equation}\label{plug}\sup_{\tilde\nu\in\mathcal M(\sigma)}\left(h(\sigma,\tilde\nu)+
\int\Phi d\tilde\nu\right)\geq
\log\sum_{W\in\mathscr{P}_{m,n,\underline{k}}(P)  }\frac{|Y_W|}{|P|}-\frac{\varepsilon n}{2}.
\end{equation}
Since $\mathcal M(\sigma)$ is compact in the weak* topology and the entropy function on it is upper semicontinuous, 
there exists a measure $\tilde\mu\in \mathcal M(\sigma)$
that attains the supremum in \eqref{plug}. 
The measure $\tilde\mu\circ\pi^{-1}$ on $\Delta$ is $f^{\tilde n}$-invariant and its spread 
\[
\mu = \frac{1}{\tilde n}
\sum_{W\in  \mathscr{P}_{m,n,\underline{k}}(P)  }\sum_{j=0}^{\tilde n-1}
(\tilde\mu|_{Y_W})\circ (f^j\circ\pi)^{-1}
\]
belongs to $\mathcal M_m(f)$.
Although $f^{\tilde n}$ may not be the first return map to $\Delta$, 
 Abramov's formula, connecting the entropies of $\mu$ and $\tilde\mu\circ\pi^{-1}$, and Kac's formula, connecting the integrals of 
 $-\log|f|$ and 
 $\Phi$, still hold \cite[Theorem~2.3]{PesSen08} and
we have
\begin{equation}\label{abramov}
h(\sigma,\tilde\mu)+\int\Phi d\tilde\mu=F(\mu)\tilde n.
\end{equation}
From \eqref{plug} and \eqref{abramov} we have
\[\label{first}
\sum_{W\in \mathscr{P}_{m,n,\underline{k}}(P) } |Y_W|\leq \exp\left(F(\mu)\tilde n+\frac{\varepsilon n}{2 }\right)|P|.
\]
From this inequality, \eqref{bir-new} and \eqref{bir-new-new}, for all sufficiently large 
 $n\geq1$ we obtain \[\begin{split}
 |A_{n}(\vec{\phi},\vec{\beta},P)|
&\leq 
(2p_m-1)\exp\left(\frac{\varepsilon n}{3} \right)\sum_{W\in \mathscr{P}_{m,n,\underline{k}}(P) }|Y_W|\\
&\leq \exp\left((F(\mu)+\varepsilon)n\right)\left|P\right|,\end{split}
\]
as required in Lemma~\ref{hyperbolic}(a).
The inequality in Lemma~\ref{hyperbolic}(b) follows from \eqref{bir}. 
\medskip

\noindent{\bf Case 2:}  
$P\in \tilde{\mathscr{P}}_m\setminus \mathscr{P}_m$. 
 Let $z$ denote the periodic point of period $p_m$ in $\partial J_m$.
Then $z$ is hyperbolic repelling, and $f^{p_m}$ maps the point in $\partial J_m\setminus\{z\}$ to $z$.
For simplicity we assume $z\in P$.
 Otherwise, $z\in f^{p_m}(P)$ holds and so the argument is analogous.

Let $n\in\mathbb N$ satisfy
$A_n(\vec{\phi},\vec{\beta},P)\neq\emptyset$.
 Our strategy is to use the simplest version of the coarse graining to be formally introduced in $\S$\ref{coarse}:
 we begin by splitting
$A_{n}(\vec{\phi},\vec{\beta},P)$ into
two subsets, one consisting of points which remain in $\bigcup_{k=0}^{p_m-1}f^k(P)$ almost 
until time $n$, and the complement of this set.
For the first set, the influence of the dynamics
near the set $\Gamma_m$ is negligible, and a local analysis near the orbit of $z$ suffices for the estimate of its Lebesugue measure. The second set is influenced by the dynamics near the orbit of $z$ and the dynamics near $\Gamma_m$. We estimate the Lebesgue measure of each set separately, and unify the estimates at the end to obtain the desired one in Lemma~\ref{hyperbolic}.

For each $t\in\mathbb N$, we put
\[V_{t}=P\cap\left(f^{-t}\left(\bigcup_{Q\in\mathscr{P}_m} Q\right)\setminus\bigcup_{k=0}^{t-1} f^{-k}\left(\bigcup_{Q\in\mathscr{P}_m} Q\right)\right),\]
 and split
$|A_{n}(\vec{\phi},\vec{\beta},P)
|=I+I\!I$
where
\[I=\sum_{t=1}^{n }|A_{n}(\vec{\phi},\vec{\beta},V_t)|\ \text{ and }\ I\!I=\sum_{ t= n+1 }^\infty|A_{n}(\vec{\phi},\vec{\beta},V_t)|.\]
The set $V_t$ is non-empty if and only if $t$ is an integer multiple of $p_m$. If $V_t\neq\emptyset$ then $f^t(V_t)$ contains some element of $\mathscr{P}_m$, and thus
\begin{equation}\label{lowcm}
|f^{t}(V_t)|\geq\min\{|Q|\colon Q\in\mathscr{P}_m\}>0.\end{equation}
Since the restriction of $f^t$ to the smallest closed interval that contains $V_t$ and $z$ extends to a diffeomorphism on an open interval,
the distortion of $f^{t}$ on the former interval is bounded by a constant $C>0$ that depends only on $f$. Then we have
\begin{equation}\label{new-1}|V_t| \leq C\exp\left(F(\delta_z^{p_m})t\right)|P|.\end{equation}
 By \eqref{new-1} we obtain
\begin{equation}\label{II-estimate}I\!I\leq\sum_{t=n+1}^\infty|V_t|\leq \frac{C\exp\left(F(\delta_z^{p_m})(n+1)\right)}{1-\exp(F(\delta_z^{p_m})p_m)}|P|.
\end{equation}

In order to estimate the sum $I$, for each 
$t\in\{1,\ldots, n\}$ 
let
 $M_{2,\ell}(t,\mathbb Z_\varepsilon)$ denote the set of $2\times \ell$ matrices $(\beta_{ij})_{i=0,1,1\leq j\leq \ell }$
with entries in 
$\{(\varepsilon/3) a\colon a\in\mathbb Z\}$ for which
the following hold:
\begin{align}\label{v20}
\beta_{0j}t+\beta_{1j}(n-t)&>\left(\beta_j-\frac{\varepsilon}{3}\right)n\ \text{ for }1\leq j\leq\ell;\\
\label{v30}
\inf\phi_j-\frac{\varepsilon}{3}&<\beta_{ij}\leq \sup\phi_j\ \text{ for }i=0,1\text{ and }1\leq j\leq\ell.
\end{align}
For $\mathbf B=(\beta_{ij})\in M_{2,\ell}(t,\mathbb Z_\varepsilon)$ and 
$i=0,1$,
put $\vec{\beta}_i=(\beta_{i1},\ldots,\beta_{i\ell})$ and define
\[A_{n}(t,\mathbf B)=\left\{x\in  V_t\colon \frac{1}{t}S_{t}\vec\phi(x)\geq\vec\beta_0\ \text{ and }\ \frac{1}{n-t}S_{n-t}\vec\phi(f^{t}(x))\geq\vec\beta_1\right\}.\]
We claim that 
\begin{equation}\label{claim-eq}A_{n}(\vec{\phi},\vec\beta,V_t)\subset \bigcup_{\mathbf B\in  M_{2,\ell}(t,\mathbb Z_\varepsilon)}A_n(t,\mathbf B).\end{equation}
Indeed, for each $x\in  A_n(\vec{\phi},\vec{\beta},V_t)$ 
there exists $\mathbf B=(\beta_{ij})\in M_{2,\ell}(t,\mathbb Z_\varepsilon)$
such that for $1\leq j\leq\ell$ we have
\[\beta_{0j}\leq
\frac{1}{t }S_{t}\phi_j(x)<\beta_{0j}+\frac{\varepsilon}{3}\ \text{ and }\  \beta_{1j}\leq \frac{1}{n-t }S_{n-t}\phi_j(f^{t}(x))<\beta_{1j}+\frac{\varepsilon}{3}.
\]
This implies \eqref{v20} and \eqref{v30},  and hence
$x\in A_n(t,\mathbf B)$, which verifies  \eqref{claim-eq}.

Let
 $\delta>0$ be such that
\begin{equation}\label{phi-d}\|\vec\phi(x)-\vec\phi(y)\|<\frac{\varepsilon}{7}\ \text{
for $x,y\in X$ with $|x-y|<\delta$.}\end{equation}
 In view of what was proved in Case~1 and Lemma~\ref{u-decay}, for the rest of the proof in Case~2 we assume $N\geq1$ is a sufficiently large integer for which the following hold: 
\begin{itemize}
\item[(D1)] if $n\geq N$ then the inequalities in
(a) and (b) of Lemma~\ref{hyperbolic} hold for every element of $\mathscr{P}_m$ with $\varepsilon$ replaced by $\varepsilon/4$;
\item[(D2)]for every connected component $W$ of $\bigcap_{k=0}^{N-1}f^{-k}(K_{m,m+1})$ we have $|W|<\delta$.
\end{itemize}
To finish the proof in Case~2, for all sufficiently large $n$
we establish the inequalities in
(a) and (b) of Lemma~\ref{hyperbolic} for every element of $\tilde{\mathscr{P}}_m\setminus \mathscr{P}_m$.

Let $n\geq N$ and let $t\in\{1,\ldots,n-N+1\}$.
   For each matrix $\mathbf B=(\beta_{ij})\in M_{2,\ell}(t,\mathbb Z_\varepsilon)$
with $A_n(t,\mathbf B)\neq \emptyset$,
 we estimate the Lebesgue measure
 $|A_n(t,\mathbf B)|$ from above. 
Let $\mathscr{P}_m(t,\mathbf B)$ denote the collection of the diffeomorphic pullbacks of 
elements of $\mathscr{P}_m$ by $f^{n-t}$
that intersect $f^{t}(A_n(t,\mathbf B))$.
The elements of $\mathscr{P}_m(t,\mathbf B)$ intersect
$\cA_{n-t}(\vec{\phi},\vec\beta_1)$ and their union
 contains $f^{t}(A_n(t,\mathbf B))$,
because
\[f^{t}(A_{n}(t,\mathbf B))\subset \cA_{n-t}(
\vec{\phi},\vec\beta_1)\cap\bigcap_{k=0}^{n-t-1}f^{-k}(K_{m,m+1}).
\]
By (D1), for each $Q\in \mathscr{P}_m(t,\mathbf B)$
 there exists $\mu_{t,Q}\in\mathcal M_m(f)$ such that 
\begin{align}\label{already-10}
|\cA_{n-t}(\vec{\phi},\vec\beta_1)\cap Q
|&\leq\exp\left(\left (F(\mu_{t,Q})+\frac{\varepsilon}{4}\right)(n-t)\right)|Q|\quad\text{and}\\
\label{already1}\int\vec\phi d\mu_{t,Q}&>\vec\beta_1-\frac{1}{4}\vec\varepsilon.
\end{align}
Pick a measure $\mu_{t,\mathbf B}$ in the finite set $\{\mu_{t,Q}\colon Q\in \mathscr{P}_m(t,\mathbf B)\}$ that maximizes the free energy within this finite set. By \eqref{already-10} and $\sum_{Q\in \mathscr{P}_m(t,\mathbf B)}|Q|\leq|X|$ we have
\[
\begin{split}| f^{t}(A_n(t,\mathbf B ))
|&\leq \sum_{Q\in\mathscr{P}_m(t,\mathbf B) } |\cA_{n-t}(\vec{\phi},\vec\beta_1)\cap Q
|\\&\leq\exp\left(\left (F(\mu_{t,\mathbf B})+\frac{\varepsilon}{4}\right)(n-t)\right)|X|.\end{split}\]
Combining this inequality with \eqref{lowcm}, we obtain
\begin{equation}\label{already10}
\begin{split}\frac{|A_n(t,\mathbf B )
|}{|V_t|}&\leq C
\frac{| f^{t}(A_n(t,\mathbf B))
|}{|f^{t}(V_t)|}\leq \frac{C|X|}{|f^{t}(V_t)| }\exp\left(\left (F(\mu_{t,\mathbf B})+\frac{\varepsilon}{4}\right)(n-t)\right).\end{split}\end{equation}
For the first inequality we have used the bounded distortion.

\begin{sublemma}\label{deltapz}If $n$ is sufficiently large, then we have
\[\int\vec\phi d\delta_z^{p_m}>\vec\beta-\vec\varepsilon.\]\end{sublemma}
\begin{proof}
Take $x\in A_{n}(\vec{\phi},\vec{\beta},P)$. Since $x$, $z\in P$, 
the same argument as the deduction of 
\eqref{w-decay-c} yields
\begin{equation}\label{deltapz-eq}\frac{1}{n}S_n\vec\phi(z)\geq \frac{1}{n}S_n\vec\phi(x)-\frac{\vec\varepsilon}{2}>\vec\beta-\frac{\vec\varepsilon}{2}\end{equation}
provided $n$ is sufficiently large.

 Write $n=p_mq+r$, with non-negative integers $q$, $r$ satisfying $0\leq r\leq q-1$.
Using \eqref{deltapz-eq} we have
\[\begin{split}\int\vec\phi d\delta_z^{p_m}&=\frac{1}{p_mq}S_{p_mq}\vec\phi(z)=\frac{n}{p_mq}\frac{1}{n}S_n\vec\phi(z)-\frac{1}{p_mq}S_{n-p_mq}\vec\phi(f^{p_mq}(z))\\
&=\frac{1}{n}S_n\vec\phi(z)+\frac{n-p_mq}{p_mq}\frac{1}{n}S_n\vec\phi(z)-\frac{1}{p_mq}S_{n-p_mq}\vec\phi(f^{p_mq}(z))\\&>\vec\beta-\frac{\vec\varepsilon}{2}-\frac{2r}{p_mq}\|\vec\phi(z)\|>\vec\beta-\vec\varepsilon\end{split}\]
provided $n$ is sufficiently large.
\end{proof}
Define a measure $\nu_{t,\mathbf B}\in\mathcal M(f)$ by
\[\nu_{t,\mathbf B}=
\frac{t}{n}\cdot\delta_z^{p_m}+\left(1-\frac{t}{n}\right)\cdot\mu_{t,\mathbf B}.\]
\begin{sublemma}\label{esc-4}If $n$ is sufficiently large, then we have
\[\int\vec\phi d\nu_{t,\mathbf B}>\vec\beta-\vec\varepsilon.\]
\end{sublemma}
\begin{proof}From
  \eqref{already1} we have \begin{equation}\label{esc-4-1}\int\vec\phi d\mu_{t,\mathbf B}>\vec\beta_1-\frac{1}{4}\vec\varepsilon.\end{equation} 
We first treat the case $t\sup\Vert\vec\phi\Vert<\varepsilon n/30$. Clearly we have
 \begin{equation}\label{esc-4-2}\left\Vert\frac{t}{n}\int\vec\phi d\delta_z^{p_m}\right\Vert<\frac{\varepsilon}{30}.
 \end{equation}
  From \eqref{v30} we have $\|\vec\beta_{0}\|\leq \sup\|\vec\phi\|+\varepsilon/3$, which gives
 \begin{equation}\label{esc-4-10}
 \left\Vert\frac{t}{n}\vec\beta_0\right\Vert<\frac{\varepsilon}{3}+\frac{\varepsilon}{30}.
 \end{equation}
 Combining \eqref{esc-4-1}, \eqref{esc-4-2}, \eqref{esc-4-10} and then using \eqref{v20} we obtain
\[
\begin{split}\int\vec\phi d\nu_{t,\mathbf B}&=\frac{t}{n}\int\vec\phi d\delta_z^{p_m}+\left(1-\frac{t}{n}\right)\int\vec\phi d\mu_{t,\mathbf B}\\
&>-\frac{1}{30}\vec\varepsilon+\frac{t}{n}\vec\beta_0+\frac{n-t}{n}\left(\vec\beta_1-\frac{1}{4}\vec\varepsilon\right)-\frac{t}{n}\vec\beta_0
\\
&\geq-\frac{1}{30}\vec\varepsilon+\left(\frac{t}{n}\vec\beta_0+\frac{n-t}{n}\vec\beta_1\right)-\frac{1}{4}\vec\varepsilon-\frac{t}{n}\vec\beta_0
\\
&>-\frac{1}{30}\vec\varepsilon+\vec\beta-\frac{1}{3}\vec\varepsilon-\frac{1}{4}\vec\varepsilon-\frac{1}{3}\vec\varepsilon-\frac{1}{30}\vec\varepsilon>\vec\beta-\vec\varepsilon,\end{split}
\]
as required.
\medskip

It is left to treat the case $t\sup\Vert\vec\phi\Vert\geq\varepsilon n/30$.
We assume $n$ is large enough so that $t\geq N$.
   Let $x\in A_{n}(t,\mathbf B)$.
   There exists $k\in\{0,\ldots,p_m-1\}$ such that 
   for every $0\leq s\leq t-1$,  $f^s(x)$ and $f^s(f^k(z))$ belong to the same connected component of $K_{m,m+1}$. Hence,
   if $s\leq t-N$ then
    $f^s(x)$ and $f^s(f^k(z))$ belong to the same connected component of $\bigcap_{\ell=0}^{t-s-1}f^{-\ell}(K_{m,m+1})$,
    and so $|f^s(x)-f^s(f^k(z))|<\delta$ by (D2).
From \eqref{phi-d} we have
 \[\begin{split}\Vert  S_{t}\vec\phi(x)-S_{t}\vec\phi(f^k(z))\Vert\leq& \Vert  S_{t-N+1}\vec\phi(x)-S_{t-N+1}\vec\phi(f^k(z))\Vert\\
 &+\Vert  S_{N-1}\vec\phi(f^{t-N+1}(x))-S_{N-1}\vec\phi(f^{t-N+1}(f^k(z)))\Vert\\
 \leq& \frac{\varepsilon }{7}(t-N+1)+2(N-1)\sup\|\vec\phi\|<\frac{t\varepsilon}{6}\end{split}\]
provided $n$ is sufficiently large, and thus
\begin{equation}\label{esc-4-5}\frac{1}{t}S_{t}\vec\phi(f^k(z))>\frac{1}{t} S_{t}\vec\phi(x)-\frac{1}{6}\vec\varepsilon\geq \vec\beta_0-\frac{1}{6}\vec\varepsilon.\end{equation}
Since 
$z$ is of period $p_m$, we have
\[\left\Vert\frac{1}{p_m}S_{p_m}\vec\phi(z)- \frac{1}{t}S_t\vec\phi(f^k(z))\right\Vert\leq \frac{p_m}{t}\sup\|\vec\phi\|<\frac{\varepsilon}{6}\]
provided $n$ is sufficiently large,
and thus
\begin{equation}\label{esc-4-6}\frac{1}{p_m}S_{p_m}\vec\phi(z)> \frac{1}{t}S_t\vec\phi(f^k(z))-\frac{1}{6}\vec\varepsilon.\end{equation}
Combining \eqref{esc-4-5} and \eqref{esc-4-6}
we have
 \begin{equation}\label{esc-4-7}\int\vec\phi d\delta_z^{p_m}=\frac{1}{p_m}S_{p_m}\vec\phi(z)>\vec\beta_0-\frac{1}{3}\vec\varepsilon.\end{equation}
 Combining \eqref{esc-4-1}, \eqref{esc-4-7} and then using \eqref{v20} we obtain
\[\begin{split}\int\vec\phi d\nu_{t,\mathbf B}&=\frac{t}{n}\int\vec\phi d\delta_z^{p_m}+ \frac{n-t}{n}\int\vec\phi d\mu_{t,\mathbf B}\\&>\frac{t}{n}\left(\vec\beta_0-\frac{1}{3}\vec\varepsilon\right)+\frac{n-t}{n}\left(\vec\beta_1-\frac{1}{4}\vec\varepsilon\right)\\
&=\frac{t}{n}\vec\beta_0+\frac{n-t}{n}\vec\beta_1-\frac{1}{3}\vec\varepsilon-\frac{1}{4}\vec\varepsilon>\vec\beta-\vec\varepsilon,\end{split}
\]
as required.
\end{proof}

From \eqref{lowcm}
\eqref{new-1} and \eqref{already10} we have
\begin{equation}\label{vi}
|A_n(t,\mathbf B)|\leq  \frac{C^2|X|}{|f^{t}(V_t)|}\exp\left(\left( F(\nu_{t,\mathbf B})+\frac{\varepsilon}{4}\right)n\right)|P|.
\end{equation}
Let $\mu$ be a measure in the finite set $\{\nu_{t,\mathbf B}\colon\mathbf B\in M_{2,\ell}(t,\mathbb Z_\varepsilon)\}\cup\{\delta_z^{p_m}\}$ that maximizes the free energy within this finite set. By \eqref{v30}, clearly we have 
\begin{equation}\label{count-b}
\#M_{2,\ell}(t,\mathbb Z_\varepsilon)\leq\prod_{j=1}^\ell\left(\frac{3}{\varepsilon}\left(\sup\phi_j-\inf\phi_j\right)+2\right)^2.
\end{equation}
By \eqref{claim-eq},
 if $n$ is sufficiently large then \eqref{vi} 
 and \eqref{count-b} together imply
\begin{equation}\label{viii}|A_{n}(\vec{\phi},\vec\beta,V_t)|\leq\sum_{\mathbf B\in M_{2,\ell}(t,\mathbb Z_\varepsilon)}|A_n(t,\mathbf B)|\leq \exp\left(\left( F(\mu)+\frac{\varepsilon}{3}\right)n\right)|P|.\end{equation}

To finish, from \eqref{new-1} and \eqref{viii} we have
\[\begin{split}I&=\sum_{t=1}^{n-N}|A_{n}(\vec{\phi},\vec{\beta},V_t)|+
\sum_{t= n-N+1 }^{n}|A_{n}(\vec{\phi},\vec{\beta},V_t)|\\
&\leq\sum_{t=1}^{n-N}|A_{n}(\vec{\phi},\vec{\beta},V_t)|+
\sum_{t= n-N+1}^n |V_t|\\
&\leq \frac{n}{p_m}\exp\left(\left( F(\mu)+\frac{\varepsilon}{3}\right)n\right)|P|+\frac{Cn}{p_m} \exp\left(F(\mu)(n-N+1)\right)|P|.\end{split}\]
Combining this estimate with that of the sum $I\!I$ in \eqref{II-estimate}, we obtain
\[\begin{split}|A_{n}(\vec{\phi},\vec{\beta},P)
|=I+I\!I\leq 
\exp\left((F(\mu)+\varepsilon)n\right)|P|
\end{split}\]
for all sufficiently large $n$
as required in Lemma~\ref{hyperbolic}(a).
 The inequality in Lemma~\ref{hyperbolic}(b) is a consequence of Sublemmas~\ref{deltapz} and \ref{esc-4}.
 The proof of Lemma~\ref{hyperbolic} is complete.
\end{proof}

\subsection{Coarse graining decomposition into clusters}\label{coarse}
In this subsection, for all sufficiently large $n\geq1$
we decompose the set $\cA_n(\vec{\phi},\vec{\alpha})$ into a finite number of clusters consisting of points that share the same `itinerary' up to time $n$, and share almost the same average values of $\vec{\phi}$ along the segments of orbits each contained in a single cycle. 

Suppose $m(f)\geq1$. Let $\varepsilon>0$ and
write $\mathbb Z_\varepsilon=\{(\varepsilon/3) a\colon a\in\mathbb Z\}$.
If $m(f)=\infty$, then let $m_*$, $N_*\in\mathbb N$ satisfy the conclusion of Lemma~\ref{deep} with
 $\varepsilon$ replaced by $\varepsilon/2$, for any $\vec{\beta}=(\beta_{1},\ldots,\beta_{\ell})\in \mathbb Z_\varepsilon^\ell$ satisfying
$\inf\phi_j-\varepsilon/3<\beta_{j}\leq \sup\phi_j$ for $1\leq j\leq\ell.$
We set
\begin{equation}\label{M-def}M=\begin{cases}m_*&\ \text{ if $m(f)=\infty$,}\\
m(f)&\ \text{ if $1\leq m(f)<\infty$.}\end{cases}\end{equation}

For each $x\in X$, we define by induction two (possibly infinite) increasing sequences $\{\hat n_i(x)\}_{i=0}^{\hat q(x)}$,
$\{\hat m_i(x)\}_{i=0}^{\hat q(x)}$ of non-negative integers
that successively record the time and position at which the orbit of $x$ falls into deeper renormalization cycles.
Start with 
\[\hat n_0(x)=0\quad\text{and}\quad 
\hat m_0(x)=
\max\{0\leq m\leq M\colon x\in K_{m}\}. 
\]
Let $i\geq0$ and
suppose $\hat n_i(x)$, $\hat m_i(x)$ are defined. If $\hat m_{i}(x)=M$, or if $\hat m_i(x)<M$ and 
$f^k(x)\in K_{\hat m_i(x),\hat m_i(x)+1}$
for every $k>\hat n_i(x)$, then we stop the inductive definition by setting $\hat q(x)=i$. Otherwise, we define
\[\begin{split}
\hat n_{i+1}(x)&=\min\{k> \hat n_i(x)\colon f^k(x)\notin
K_{\hat m_i(x),\hat m_i(x)+1}\}\quad \text{and}\\
\hat m_{i+1}(x)&=\max\{\hat m_i(x)+1\leq m\leq M\colon f^{\hat n_{i+1}(x)}(x)\in K_{m}\}.\end{split}\]
If $\hat m_i(x)$, $\hat n_i(x)$ are defined for all $i\geq1$, then we set $\hat q(x)=\infty$.

For integers $n>M$ and 
 $0\leq q\leq M$, we define
\[I_n(q)=
\left\{
\begin{tabular}{l}
\!\!\!$\mathbf{t}=((n_0,m_0), (n_1,m_1),\ldots,(n_q,m_q))\in(\mathbb Z^2)^{q+1}\colon $\!\!\!\\
$\quad\quad0=n_0< n_1<\cdots<n_q< n$,\\ $\quad\quad0\le m_0<m_1<\cdots<m_q\leq M$\end{tabular}
\right\}.
\]
Let $\mathbf t=((n_0,m_0),\ldots,(n_q,m_q))\in I_n(q).$ For convenience we set 
\[n_{q+1}=n,\] 
 and call
 $n_0,n_1,\ldots,n_{q+1}$ {\it transition times}.
 Further we set \begin{equation}\label{ti-eq}t_i=n_{i+1}-n_i\ \text{ for  }0\leq i\leq q.\end{equation}
 Define
\[R(\mathbf t)=
\left\{
\begin{tabular}{l}
\!\!\!$x\in X\colon $\!\!\! $\hat q(x)\ge q$, $(\hat n_{i}(x),\hat m_i(x))=(n_i,m_i)$ for  $0\leq i\leq q$\!\!\!\\
\quad\quad\quad and $\hat n_{q+1}(x)\ge n$ if $\hat q(x)>q$ \end{tabular}
\right\}.
\]
We have 
\begin{equation}\label{X-decomposition}
X=
\bigcup_{q=0}^{M}\bigcup_{\mathbf{t}\in I_n
(q)}R(\mathbf{t}).
\end{equation}

We decompose the set $\cA_{n}(\vec{\phi},\vec{\alpha})\cap R(\mathbf{t})$ with respect to coarse grained values at scale $\varepsilon$ of time averages of $\vec\phi$  between two consecutive transition times.
For each $\mathbf{t}\in I_n(q)$,
let 
$M_{q+1,\ell}(\mathbf t,\mathbb Z_\varepsilon)$ 
 denote the set of $(q+1)\times\ell$ matrices $\mathbf A=(\alpha_{ij})_{0\leq i\leq q, 1\leq j\leq\ell}$
with entries in $\mathbb Z_\varepsilon$ such that the following hold:
\begin{align}\label{v2}
\sum_{i=0}^q\alpha_{ij}t_i&>\left(\alpha_j-\frac{\varepsilon}{3}\right)n\ \text{ for  }1\leq j\leq \ell;\\
\label{v3}
\inf\phi_j-\frac{\varepsilon}{3}&<\alpha_{ij}\leq \sup\phi_j\ \text{ for }
0\leq i\leq q\ \text{ and }\ 1\leq j\leq\ell.
\end{align}
For a matrix $\mathbf A=(\alpha_{ij})\in M_{q+1,\ell}(\mathbf t,\mathbb Z_\varepsilon)$,
let $\vec{\alpha}_i=(\alpha_{i1},\ldots,\alpha_{i\ell})$ denote the $i$-th row of $\mathbf A$.
Define
\[R(\mathbf{t},\mathbf A)=
\{x\in R(\mathbf{t})\colon S_{t_i }
\vec\phi(f^{n_i}(x))
\geq t_i\vec{\alpha}_{i}\ \text{ for  }0\leq i\leq q\}.\]
 We claim that 
 \begin{equation}\label{claimeq2}
\cA_n(\vec{\phi},\vec{\alpha})\cap R(\mathbf{t})\subset 
\bigcup_{\mathbf A\in M_{q+1,\ell}(\mathbf t,\mathbb Z_\varepsilon)} R(\mathbf{t},\mathbf A).
\end{equation}
Indeed, for each $x\in \cA_n(\vec{\phi},\vec{\alpha})\cap R(\mathbf{t})$ 
there exists
$\mathbf A\in M_{q+1,\ell}(\mathbf t,\mathbb Z_\varepsilon)$ 
such that for every entry $\alpha_{ij}$ of $\mathbf A$
we have
\[
\alpha_{ij}\leq \frac{1}{t_i }S_{t_i}\phi_j(f^{n_i}(x))<\alpha_{ij}+\frac{\varepsilon}{3}
.\]
This implies \eqref{v2} and \eqref{v3}, which verifies \eqref{claimeq2}.

\subsection{Lebesgue measures of clusters}\label{m-cluster}
We are in position to state and prove a main technical estimate on the Lebesgue measure of each cluster $R(\mathbf{t},\mathbf A)$.

\begin{prop}\label{over-lem}
Suppose $m(f)\geq1$.
For any $\varepsilon>0$ there exists an integer $N>M$ such that if $n\geq N$ and $0\leq q\leq M$, then for every
 $\mathbf{t}\in I_n(q)$ and
every $\mathbf A\in M_{q+1,\ell}(\mathbf t,\mathbb Z_\varepsilon)$ satisfying
  $R(\mathbf{t},\mathbf A)\neq\emptyset$, 
there exists a measure $\mu\in\mathcal M(f)$ satisfying
\begin{align}\tag{a}
\left|R(\mathbf{t},
\mathbf A)\right|&\leq 
\exp\left(\left(F(\mu)+\varepsilon\right) n\right)|X|\quad\text{and}\\
\tag{b}
\int\vec\phi d\mu&>\vec{\alpha}-\vec{\varepsilon}.
\end{align}
\end{prop} 
\begin{proof}
Let $\varepsilon>0$, and
let $\delta>0$ be such that
\begin{equation}\label{initial-phi}\|\vec\phi(x)-\vec\phi(y)\|<\frac{\varepsilon}{3}\ \text{
for $x,y\in X$ with $|x-y|<\delta$.}\end{equation}
We first point out that the following three exceptional cases with no transition between renormalization cycles can be treated by much simpler versions of later arguments.\medskip

\noindent{\bf Case~E1:} {\it $q=0$, $m_0=M$, $m(f)<\infty$ and ${\rm int}(J_{m(f)})$ contains a two-sided attracting  periodic point 
(Remark~\ref{rem:C}(C-I)).} This case is treated by a much simpler version of the argument in
Case~2 in Step~2 below.
\medskip

\noindent{\bf Case~E2:} {\it $q=0$, $m_0=M$, $m(f)<\infty$ and $\partial J_{m(f)}$ contains a one-sided attracting  periodic point 
(Remark~\ref{rem:C}(C-II)).}
This case is treated by a much simpler version of the argument in
Case~3 in Step~2 below.
\medskip

\noindent{\bf Case~E3:} {\it $q=0$, $m_0=M$, $m(f)<\infty$ and $f$ has no attracting periodic point
((A) in \S\ref{metric}).}
This case is treated by a much simpler version of the argument in
Case~4 in Step~2 below.
 \medskip

\noindent Hence, we refer the reader to Step~2 below, for details regarding proofs in cases E1, E2, E3. 

We now move on to all the remaining cases with transitions between renormalization cycles.
We are concerned with sufficiently large integers $N_0$, $N_1$ the purposes of which are as follows:
\begin{itemize}
\item $N_0$ concerns the number of iterations in one renormalization cycle, chosen depending only on $\delta$ in \eqref{initial-phi}. In view of Lemma~\ref{u-decay},
 we take $N_0$ such that for every integer
 $0\leq m\leq M-1$ and
   every connected component $W$ of $\bigcap_{k=0}^{N_0-1}f^{-k}(K_{m,m+1})$, $|W|<\delta$ holds;

\item $N_1$ is a lower bound on consecutive transition times such  that some estimates go through (see Lemmas~\ref{inclusion} and \ref{lem-step}), which is chosen depending only on $\vec\phi$, $\varepsilon$, $\delta$, $N_0$, $f$ and satisfies
\begin{equation}\label{N1N0}N_1\geq N_0.\end{equation}

\end{itemize}

The rest of the proof of Proposition~\ref{over-lem} breaks into three steps.
\medskip

\noindent{\bf Step 1: Estimate of error bound in transition.}
Let $n>M$, $0\leq q\leq M$ and let $\mathbf{t}\in
I_n(q)$, 
 $\mathbf A\in M_{q+1,\ell}(\mathbf t,\mathbb Z_\varepsilon)$ satisfy
$R(\mathbf{t},\mathbf A)\neq\emptyset$. 
Since we have excluded cases E1, E2, E3, we have $m_0+1\leq M$ and so $K_{m_0,m_0+1}$ is defined.
Let $\mathscr{R}_0(\mathbf A)$ denote the collection of the connected components of 
$K_{m_0,m_0+1}$. Let $R_0(\mathbf A)$ denote the union of elements of $\mathscr{R}_0(\mathbf A)$.
We have $R_0(\mathbf A)=K_{m_0,m_0+1}$.
For $i=1,\ldots,q+1$ we define a collection $\mathscr{R}_i(\mathbf A)$ of non-degenerate closed subintervals of $R_0(\mathbf A)$ as follows: 
\begin{itemize}
\item  
$\mathscr{R}_i(\mathbf A)$ for $1\leq i\leq q$
is the collection of the
diffeomorphic pullbacks of $f^k(J_{m_i})$ for some 
$k\in\{0,\ldots, p_{m_i}-1\}$
by $f^{n_{i}}$ that are contained in $R_0(\mathbf A)$
and intersect $R(\mathbf{t},\mathbf A)$;
\item if $m_q< M$, then $\mathscr{R}_{q+1}(\mathbf A)$
is the collection of the 
diffeomorphic pullbacks of 
 $f^k(J_{M})$ for some 
$k\in\{0,\ldots, p_{M}-1\}$
by $f^n$ that are contained in $R_0(\mathbf A)$
and intersect $R(\mathbf{t},\mathbf A)$;
\item if $m_q=M$ and $m(f)=\infty$, or if $m_q=M$, $m(f)<\infty$ and $f$ has an attracting periodic point,
then let $\mathscr{R}_{q+1}(\mathbf A)$ denote the collection of the pullbacks of $f^k(J_{M})$ for some 
$k\in\{0,\ldots, p_{M}-1\}$
by $f^n$ that are contained in $R_0(\mathbf A)$
and intersect $R(\mathbf{t},\mathbf A)$.
If $m_q=M$, $m(f)<\infty$ and $f$ has no attracting periodic point, then 
 let $\mathscr{R}_{q+1}'(\mathbf A)$ denote the collection of the pullbacks of $f^k(L_{M})$ for some 
$k\in\{0,\ldots, p_{M}-1\}$
by $f^n$ that are contained in $R_0(\mathbf A)$
and intersect $R(\mathbf{t},\mathbf A)$. Then
 $\mathscr{R}_{q+1}(\mathbf A)$ is the refinement of 
$\mathscr{R}_{q+1}'(\mathbf A)$ by points in $\bigcup_{k=n_q}^{n-1} f^{-k}(c)$.

\end{itemize}
\begin{remark}\label{dif-remark}
Recall that $n>n_q$, and see \eqref{KMF} to motivate the definition of $\mathscr{R}_{q+1}(\mathbf A)$ in the case $m_q=M$.
In this case, 
for $R\in\mathscr{R}_{q+1}(\mathbf A)$, $f^n|_{R}$ is may not be a diffeomorphism.
However,
 for every $R\in\mathscr{R}_{q+1}(\mathbf A)$,
 $f^{n_q}|_R$ is a diffeomorphism. The latter will suffice for bounded distortion arguments to be developed later in the proof of Proposition~\ref{over-lem}:
see Case~2 and Case~4 in Step~2.\end{remark}
For $1\leq i\leq q+1$ let $R_i(\mathbf A)$ denote the union of elements of $\mathscr{R}_i(\mathbf A)$.
Note that
\begin{equation}\label{nest-seq}
R(\mathbf{t},\mathbf A)\subset R_{q+1}(\mathbf A)\subset
\cdots\subset R_1(\mathbf A)\subset R_0(\mathbf A). 
\end{equation}

Among all transition times,
for our purpose it suffices to concentrate on
consecutive ones that are separated by $N_1$.

\begin{lemma}\label{inclusion}
If
 $N_1$ is sufficiently large then the following statements hold:
\begin{itemize}
\item[(a)]  
for every $0\leq i\leq q$ with $t_i>N_1$ and $m_i<M$, 
\[
f^{n_i}(R_{i+1}(\mathbf A))\subset \cA_{t_i }\left(\vec{\phi},\vec{\alpha}_{i} -\frac{1}{2}\vec{\varepsilon}\right)\cap K_{m_i};
\]
\item[(b)] if $t_q>N_1$, $m_q=M$ and \begin{itemize}
  \item[(i)]  $m(f)=\infty$, or 
  \item[(ii)] $m(f)<\infty$ and  $J_{m(f)}$ contains a one-sided attracting periodic point, or 
  \item[(iii)]  $f$ has no attracting periodic point,
    \end{itemize}
 then 
\[
f^{n_q}(R_{q+1}(\mathbf A))\subset \cA_{t_q }\left(\vec{\phi},\vec{\alpha}_{q} -\frac{1}{2}\vec{\varepsilon}\right)\cap K_{m_q}.
\]\end{itemize}
\end{lemma}   
\begin{proof} Let $0\leq i\leq q$ and suppose $t_i>N_1$.
Since the definition of $\mathscr{R}_{i+1}(\mathbf A)$ yields $f^{n_i}(R_{i+1}(\mathbf A))\subset  K_{m_i}$, 
it suffices to show
\begin{equation}\label{goal-eq}f^{n_i}(R_{i+1}(\mathbf A))\subset \cA_{t_i }\left(\vec{\phi},\vec{\alpha}_{i} -\frac{1}{2}\vec{\varepsilon}\right).\end{equation}
To this end, for each $R\in\mathscr{R}_{i+1}(
\mathbf A)$ we fix $x_R\in f^{n_i}(R\cap R(\mathbf t,\mathbf A))$.
The definition of $\mathscr{R}_{i+1}(\mathbf A)$ and that of $R(\mathbf t,\mathbf A)$ give \begin{equation}\label{inclusion-eq1}S_{t_i }\vec\phi(x_R)\geq t_i\vec{\alpha}_{i}.\end{equation}

Suppose $m_i<M$.
By \eqref{N1N0}, $t_i>N_1$ and \eqref{ti-eq}, we have 
\[0\leq n_{i}+N_1-N_0\leq n_{i}+t_i-N_0-1=n_{i+1}-N_0-1,\]
and so for every $R\in\mathscr{R}_{i+1}(
\mathbf A)$, 
\[\bigcup_{k=0}^{t_i-N_0-1}f^{n_i+k}(R)\subset \bigcap_{k=0}^{N_0-1} f^{-k}(K_{m_i,m_{i}+1}).\]
Moreover, the choice of $N_0$ yields $|f^{n_i+k}(R)|<\delta$ for $0\leq k\leq t_i-N_0-1$.
For any $x\in f^{n_i}(R)$,
  \eqref{initial-phi} gives
\begin{equation}\label{inclusion-eq2}\|S_{t_i -N_0}\vec\phi(x)- S_{t_i -N_0}\vec\phi(x_R)\|\leq
\frac{\varepsilon}{3}(t_i -N_0).\end{equation}
For the remaining $N_0$ iterations, clearly we have 
\begin{equation}\label{inclusion-eq3}
\|S_{N_0}\vec\phi(f^{t_i-N_0}(x))- S_{N_0}\vec\phi(f^{t_i-N_0}(x_R))\|
\leq 2N_0 \sup\|\vec\phi\|.
\end{equation}
Combining \eqref{inclusion-eq1}, \eqref{inclusion-eq2}, \eqref{inclusion-eq3} and dividing the result by $t_i$, we obtain
\[
\frac{1}{t_i }S_{t_i  }\vec\phi (x)\geq \vec{\alpha}_{i}-\frac{1}{2}\vec{\varepsilon}
\]
provided $N_1$ is sufficiently large depending only on $\vec\phi$, $\varepsilon$, $N_0$.
Since $R\in\mathscr{R}_{i+1}(\mathbf A)$ and
 $x\in f^{n_i}(R)$
are arbitrary, \eqref{goal-eq} follows. We have verified Lemma~\ref{inclusion}(a). \medskip

To prove Lemma~\ref{inclusion}(b), suppose $t_q>N_1$ and  $m_q=M$. 
 In case (i), the same argument as in the proof of Lemma~\ref{inclusion}(a) remains valid to show \eqref{goal-eq} with $i=q$. In case (ii),
Since $n-n_q=t_q >N_1$, 
if $N_1$ is sufficiently large then 
the uniform convergence in \eqref{unic1} and equation \eqref{initial-phi} together
imply \begin{equation}\label{case2-neweq}\sup_{x\in f^{n_q}(R)}\|S_{t_q}\vec\phi(x)- S_{t_q}\vec\phi(x_R)\|\leq
\frac{\varepsilon} {2}t_q\ 
\text{ for every }R\in\mathscr{R}_{q+1}(
\mathbf A).\end{equation}  
From 
  \eqref{inclusion-eq1} and \eqref{case2-neweq} we obtain \eqref{goal-eq} with $i=q$. 

In case (iii), $M=m(f)$ and $f^{p_M}|_{L_M}$ is topologically exact. 
Recall that we have chosen $\delta>0$ so that \eqref{initial-phi} holds.
There is $n(\delta)\in\mathbb N$ such that for any integer $0\leq j\leq p_M-1$ and any interval $J$ such that  $J\subset f^j(L_M)$ and $|J|\geq \delta$, there is $r\in\{0,\ldots,p_M-1\}$ such that $f^{p_Mn(\delta)+r}(J)=L_M$. We choose $N_1$ so that
\[N_1\geq p_M(n(\delta)+1).\]
Then we have $n-n_q-p_M(n(\delta)+1)>t_q-N_1>0$.
We claim that if
$R\in\mathscr{R}_{q+1}(
\mathbf A)$ and $k\in\{0,\ldots,n-n_q-p_M(n(\delta)+1)\}$
then $|f^{k}(f^{n_q}(R))|<\delta$, 
 for otherwise
$f^{p_Mn(\delta)+r+k}(f^{n_q}(R))=L_M$ would hold for some $r\in\{0,\ldots,p_M-1\}$ and  $p_Mn(\delta)+r+k\leq n-n_q-1$, and as a result $f^n|_R$ would not be injective, a contradiction.

The rest of the argument is similar to the proof of Lemma~\ref{inclusion}(a). By the above claim and \eqref{initial-phi}, 
for every $R\in\mathscr{R}_{q+1}(
\mathbf A)$ and any $x\in f^{n_q}(R)$ we have
\begin{equation}\label{inclusion-eq2000}\|S_{n-n_q-p_M(n(\delta)+1)}\vec\phi(x)- S_{n-n_q-p_M(n(\delta)+1) }\vec\phi(x_R)\|\leq
\frac{\varepsilon}{3}(n-n_q-p_M(n(\delta)+1))).\end{equation}
For the remaining $p_M(n(\delta)+1)$ iterations, clearly we have 
\begin{equation}\label{inclusion-eq3000}
\begin{split}&\|S_{p_M(n(\delta)+1)}\vec\phi(f^{n-n_q-p_M(n(\delta)+1)}(x))- S_{p_M(n(\delta)+1)}\vec\phi(f^{n-n_q-p_M(n(\delta)+1)}(x_R))\|\\
&\leq 2p_M(n(\delta)+1) \sup\|\vec\phi\|.\end{split}
\end{equation}
Combining \eqref{inclusion-eq1}, \eqref{inclusion-eq2000}, \eqref{inclusion-eq3000} and dividing the result by $t_q$ we obtain
\[
\frac{1}{t_q }S_{t_q  }\vec\phi (f^{n_q}(x))\geq \vec{\alpha}_{q}-\frac{1}{2}\vec{\varepsilon}
\]
provided $N_1$ is sufficiently large.
Since $R\in\mathscr{R}_{q+1}(\mathbf A)$ and
 $x\in f^{n_q}(R)$
are arbitrary, \eqref{goal-eq} with $i=q$ follows.
The proof of Lemma~\ref{inclusion} is complete.
\end{proof}

\noindent{\bf Step 2: Estimate of $i$-step conditional probability.}
The next lemma provides estimates of conditional probabilities at consecutive transition times
separated by $N_1$.

\begin{lemma}\label{lem-step}
If $N_1$ is sufficiently large, then
for every $0\leq i\leq q$ with $t_i>N_1$, there exists a measure $\mu_i\in\mathcal M(f)$ such that
\begin{align}\tag{a}
\frac{|R_{i+1}(\mathbf A)|}{|R_i(\mathbf A)|}
&\leq  \exp\left(\left(F(\mu_i)+\varepsilon\right)t_i\right)\quad\text{and}\\
\tag{b}
\int\vec\phi d\mu_{i}&>
\vec{\alpha}_{i}- \vec{\varepsilon}.
\end{align}
\end{lemma}

\begin{proof}
Let $0\leq i\leq q$ and suppose $t_i>N_1$.
In view of the definition of $\mathscr{R}_i(\mathbf A)$ in Step~1, 
we treat the case $m_i<M$ and the case $m_i=M$ (hence $i=q$) separately.

In the former case,
let $R\in\mathscr{R}_i(\mathbf A)$ and suppose $R\cap R_{i+1}(\mathbf A)\neq\emptyset$.  Let $\mathscr{P}_{m_i}(\mathbf A,R)$ denote the collection of the
diffeomorphic pullbacks of $f^k(J_{m_{i+1}})$
for some $k\in\{0,\ldots, p_{m_{i+1}}-1\}$
by $f^{t_i}$
that are contained in 
$f^{n_i}(R\cap R_{i+1}(\mathbf A) )$. 
The union of elements of 
$\mathscr{P}_{m_i}(\mathbf A,R)$
equals $f^{n_i}(R\cap R_{i+1}(\mathbf A) )$.
By Lemma~\ref{inclusion}(a), every element of $\mathscr{P}_{m_i}(\mathbf A,R)$ is contained in
$\cA_{t_i }(\vec{\phi},\vec{\alpha}_{i} -(1/2)\vec{\varepsilon})\cap K_{m_i}$ provided $N_1$ is sufficiently large.
Moreover, for every $Q\in\mathscr{P}_{m_i}(\mathbf A,R)$ there exists $P\in\tilde{\mathscr{P}}_{m_i}$ such that $P\subset K_{m_i,m_i+1}$ and $Q\subset P\subset f^{n_i}(R)$.
Let
$\tilde{\mathscr{P}}_{m_i}(\mathbf A,R)$
denote the collection of elements of $\tilde{\mathscr{P}}_{m_i}$ that 
is contained in $f^{n_i}(R)$ and contains an element of $\mathscr{P}_{m_i}(\mathbf A,R)$.
For each $P\in\tilde{\mathscr{P}}_{m_i}(\mathbf A,R)$ we have
\[\bigcup_{\substack{Q\in \mathscr{P}_{m_i}(\mathbf A,R) \\ Q\subset P}} Q\subset A_{t_i }\left(\vec\phi,\vec{\alpha}_{i}
     -\frac{1}{2}\vec{\varepsilon},P\right).\]
    By Lemma~\ref{hyperbolic}, there exists $\nu_{R,P}\in\mathcal M(f)$ such that
\begin{equation}\label{last-eq400}
 \sum_{\substack{Q\in \mathscr{P}_{m_i}(\mathbf A,R) \\ Q\subset P}}|Q|\leq\left|A_{t_i }\left(\vec\phi,\vec{\alpha}_{i}
     -\frac{1}{2}\vec{\varepsilon},P\right)\right|
     \leq \exp\left(\left(F(\nu_{R,P})+\frac{\varepsilon}{2}\right)t_i\right)|P|\end{equation}
     and
\begin{equation}\label{last-eq100}
\int\vec\phi d\nu_{R,P}>
\vec{\alpha}_{i}- \vec{\varepsilon}.
\end{equation}
Let $\mu_i\in \mathcal M(f)$ be a measure 
in the finite set \[\{\nu_{R,P}\colon R\in\mathscr{R}_i(\mathbf A),\ R\cap R_{i+1}(\mathbf A)\neq\emptyset,\ P\in \tilde{\mathscr{P}}_{m_i}(\mathbf A,R) \}\] that maximizes the free energy within this finite set. By \eqref{last-eq400} we have
\begin{equation}\label{last-eq1000}
\begin{split}|f^{n_i}
(R\cap R_{i+1}(\mathbf A))|&=\sum_{P\in\mathscr{P}_{m_i}(\mathbf A,R) }|P|\\
&\leq
\sum_{P\in\tilde{\mathscr{P}}_{m_i}(\mathbf A,R) }\left|A_{t_i }\left(\vec\phi,\vec{\alpha}_{i}
     -\frac{1}{2}\vec{\varepsilon},P\right)\right|\\
     &\leq\sum_{P\in \tilde{\mathscr{P}}_{m_i}(\mathbf A,R) }\exp\left(\left(F(\nu_{R,P})+\frac{\varepsilon}{2}\right)t_i\right)|P|\\
&\leq\exp\left(\left(F(\mu_i)+\frac{\varepsilon}{2}\right)t_i\right)|f^{n_i}(R)|.\end{split}
\end{equation}
If $i\geq1$, then we have
$\bigcup_{k=n_j}^{n_{j+1}-1}f^k(R)\subset K_{n_j,n_j+1}$ for $0\leq j\leq i-1$.
By Proposition~\ref{koebe}(a), the distortion of the composition $f^{n_i}=f^{t_{i-1}}\circ\cdots \circ f^{t_1}\circ f^{t_0}$ on $R$ is bounded by the constant
\begin{equation}\label{gamma-i}
D_i=\prod_{j=0}^{i-1}\gamma_{m_j}.
\end{equation}
Hence, from \eqref{last-eq1000} we obtain
\[\begin{split}
\frac{|R\cap R_{i+1}(\mathbf A)|}{|R|}\leq D_i\frac{|f^{n_i}
(R\cap R_{i+1}(\mathbf A))|}
     {|f^{n_i}(R)|}&\leq  \exp\left(\left(F(\mu_i)+\varepsilon\right)t_i\right)
\end{split}\]   
provided $N_1$ is sufficiently large. Since $n_0=0$, the same upper bound remains valid for $i=0$. Since $R$ is an arbitrary element of $\mathscr{R}_i(\mathbf A)$ intersecting $R_{i+1}(\mathbf A)$,  
we obtain
\[
\frac{|R_{i+1}(\mathbf A)|}{|R_i(\mathbf A)|}
\leq\max_{\substack{R\in \mathscr{R}_i(\mathbf A)\\ R\cap R_{i+1}(\mathbf A)\neq\emptyset}}\frac{|R\cap R_{i+1}(\mathbf A)|}{|R|}\leq  \exp\left((F(\mu_i)+\varepsilon)t_i \right),
\]
as required in Lemma~\ref{lem-step}(a). 
The inequality in Lemma~\ref{lem-step}(b) is a consequence of 
\eqref{last-eq100}. 
\medskip

To complete the proof of Lemma~\ref{lem-step}, it is left to treat the case
$m_i=M$. In this case we have $i=q$, and there are four subcases.
\medskip

\noindent{\bf Case~1:} $m(f)=\infty$. 
We have $M=m_*$ by \eqref{M-def}.
Set $\mu_q=\mu_\infty$, which is the post-critical measure. 
The inequality in Lemma~\ref{lem-step}(a) with $i=q$ follows from $R_{q+1}(\mathbf A)\subset R_q(\mathbf A)$ and $F(\mu_q)=0$.
 Since $t_q>N_1$ and
 $f^{n_q}(R_{q+1}(\mathbf A))$ is contained in $\cA_{t_q } (\vec\phi,\vec\alpha_q-(1/2)\vec\varepsilon)\cap K_M$
 by Lemma~\ref{inclusion}(b-i),
 the inequality in Lemma~\ref{lem-step}(b) with $i=q$ follows from
Lemma~\ref{deep}(a) provided $N_1\geq N_*$ and $N_1$ is sufficiently large.
\medskip

\noindent{\bf Case~2:} $m(f)=M$ 
 and
${\rm int}(J_{m(f)})$ contains a two-sided attracting  periodic point (Remark~\ref{rem:C}(C-I)). We have
$K_{M}={\rm cl}(B(f))$
by \eqref{KMF} and Lemma~\ref{stop}.
Fix a closed subinterval $J$ of $J_M$ such that 
$c,z_f\in {\rm int}(J)$, $f^{p_{M}}(J)\subset J$ and $|K_M\setminus K_{M+1}|<\delta$ where $K_{M+1}=\bigcup_{k=0}^{p_{M}-1}f^k(J).$

Recall that $\nu$ denotes the element of $\mathcal M_{M-1}(f)$  supported on the orbit of the hyperbolic repelling periodic point in $\partial J_{M}$.
The rest of the proof is similar in spirit to   the argument in Case~2 in the proof of Lemma~\ref{hyperbolic}.  For each $R\in\mathscr{R}_{q}(\mathbf A)$ and each integer $t$ with $0\leq t\leq t_q$, define
\[V_{R,t}=\begin{cases}f^{n_q}(R)\cap K_{M+1}&\text{if }t=0,\\f^{n_q}(R)\cap\left(f^{-t}(K_{M+1})\setminus\bigcup_{k=0}^{t-1} f^{-k}(K_{M+1})\right)&\text{if }1\leq t\leq t_q-1,\\f^{n_q}(R)\setminus \bigcup_{k=0}^{t_q-1} f^{-k}(K_{M+1})&\text{if }t=t_q.\end{cases}\]Notice that $f^{n_q}(R)= \bigcup_{t=0}^{t_q} V_{R,t}$.
Then $V_{R,t}$ has at most two connected components, which are non-degenerate intervals. 
Let $W_{R,t}$ denote union of the diffeomorphic pullbacks of the connected components of $V_{R,t}$ by $f^{n_q}$ that are contained in $R$. 
Define $\nu_{R,t}\in\mathcal M(f)$ by \[\nu_{R,t}=\frac{t}{t_q }\cdot\nu+\left(1-\frac{t }{t_q }\right)\cdot\delta_{O(f)}.\]
  There exists a constant $C\geq1$ depending only $f$, $\delta$ such that
  if $x\in X$ and $k\in\mathbb N$ satisfy $x,\ldots,f^{k-1}(x)\in K_M\setminus K_{M+1}$ then $|(f^k)'x|\geq C^{-1}e^{\chi^+(\nu)k}$. Hence
  \begin{equation}\label{last-eq1}\frac{|V_{R,t}|}{|f^{n_q}(R)|}\leq 
  C\exp\left(-\chi^+(\nu)t\right)=C\exp\left(F(\nu)t\right).\end{equation}
Since $F(\delta_{O(f)})=0$ by the definition 
of the Lyapunov exponent \eqref{d-lyap},
we have $F(\nu)t\leq F(\nu_{R,t})t_q$.
By Proposition~\ref{koebe}, the distortion of $f^{n_q}$ on $R$ is bounded by the constant $D_q$ in \eqref{gamma-i} so that \eqref{last-eq1} yields \begin{equation}\label{last-eq2}\frac{|W_{R,t}|}{|R|}\leq CD_q\exp\left(F(\nu_{R,t}\right)t_q).\end{equation}

Let $\mu_{q}$ be a measure in the finite set
\[\{\nu_{R,t}\colon R\in\mathscr{R}_q(\mathbf A),\  0\leq t\leq t_q,\ W_{R,t}
\cap R_{q+1}(\mathbf A)\neq\emptyset\}\]
that maximizes the free energy within this finite set. Since $t_q>N_1$, from \eqref{last-eq2}  we obtain \[\begin{split} \frac{|R_{q+1}(\mathbf A)|}{|R_{q}(\mathbf A)|}&\leq\max_{R\in\mathscr{R}_q(\mathbf A)}\frac{|R\cap R_{q+1}(\mathbf A)|}{|R|}\leq\max_{R\in\mathscr{R}_q(\mathbf A)}\sum_{\substack{0\leq t\leq t_q\\ W_{R,t}\cap R_{q+1}(\mathbf A)\neq\emptyset}}\frac{|W_{R,t}|}{|R|}\\
&\leq  (t_q+1)\exp\left(F(\mu_q)t_q\right)\leq  \exp\left(\left( F(\mu_q)+\varepsilon\right)t_q \right) \end{split}\] provided $N_1$ is sufficiently large. 
This yields the inequality in Lemma~\ref{lem-step}(a) with $i=q$.

Since $t_q>N_1$,
similarly to the proof of Sublemma~\ref{esc-4} one can show that
 if $R\in\mathscr{R}_q(\mathbf A)$, $0\leq t\leq t_q$ and $W_{R,t}\cap R_{q+1}(\mathbf A)\neq\emptyset$ then $\int\vec{\phi}d\nu_{R,t}>\vec{\alpha}_q-\vec\varepsilon$ holds, 
provided $N_1$ is sufficiently large. This yields the desired inequality in Lemma~\ref{lem-step}(b) with $i=q$.
\medskip

\medskip

\noindent{\bf Case~3:} $m(f)=M$ and $\partial J_{m(f)}$ contains a one-sided attracting  periodic point 
(Remark~\ref{rem:C}(C-II)).
Set $\mu_q=\delta_{O(f)}$. The inequality in
 Lemma~\ref{lem-step}(a) with $i=q$ follows from $R_{q+1}(\mathbf A)\subset R_q(\mathbf A)$ and  $F(\mu_q)=0$. By Lemma~\ref{inclusion}(b-ii),
$
f^{n_q}(R_{q+1}(\mathbf A))$ is contained in $\cA_{t_q }(\vec{\phi},\vec{\alpha}_{q} -(1/2)\vec{\varepsilon})\cap K_M.$
Since $t_q >N_1$, 
if $N_1$ is sufficiently large then
the uniform convergence in \eqref{unic1} implies the inequality in 
Lemma~\ref{lem-step}(b) with $i=q$.
\medskip

\noindent{\bf Case~4:} $m(f)=M$ and $f$ has no attracting periodic point.
Recall that $K_{M }=\bigcup_{k=0}^{p_{M }-1}f^k(L_{M })$ by \eqref{KMF}.
Let $R\in\mathscr{R}_q(\mathbf A)$.  Lemma~\ref{inclusion}(b-iii) gives
 \[
f^{n_q}(R\cap R_{q+1}(\mathbf A))\subset \cA_{t_q }\left(\vec{\phi},\left(\vec{\alpha}_{q} -\frac{1}{2}\right)\vec{\varepsilon}\right)\cap K_{M}.\]
Since the unimodal map $f^{p_{M}}|_{L_{M }}$ 
is topologically exact,
the Lebesgue measure of the right set 
is estimated as follows by \cite[Proposition~4.4]{CRT}:
if $N_1$ is sufficiently large, then
there exists a measure
 $\nu_R\in \mathcal M(f)$ such that 
\begin{align}\label{last-eq101'}
 \frac{|\cA_{t_q }(\vec\phi,\vec{\alpha}_{q}
     -(1/2)\vec{\varepsilon})\cap K_{M}|}{|K_{M}|}
&\leq \exp\left(\left(F(\nu_R)+\frac{\varepsilon}{2}\right)t_q\right)\quad\text{and}\\
\label{hyp-300'}
\int\vec\phi d\nu_R&>
\vec{\alpha}_q- \vec{\varepsilon}.
\end{align}

For all sufficiently large $N_1$ we deduce that
\begin{equation}\label{obtain-eq}\begin{split}
\frac{|R\cap R_{q+1}(\mathbf A)|}{|R|}&\leq D_q\frac{|f^{n_q}
(R\cap R_{q+1}(\mathbf A))|}
     {|f^{n_q}(R)|}\\&\leq D_q\frac{|\cA_{t_q }(\vec\phi,
      \vec{\alpha}_{q}-(1/2)\vec{\varepsilon})\cap K_{M }|}{|f^{n_q}(R)|}\\
      &\leq \exp\left((F(\nu_{R})+\varepsilon)t_q\right).
\end{split}\end{equation}
The first inequality is because
 the distortion of $f^{n_q}$ on $R$ is bounded by the constant $D_q$ in \eqref{gamma-i} by Proposition~\ref{koebe}.
Since $t_q>N_1$,
the last inequality follows from
 \eqref{last-eq101'}
and the next uniform lower bound \[\frac{|f^{n_q}(R)|}{|K_{M }|}\geq \frac{\min_{0\leq k\leq p_{M }-1 }|f^k(L_{M })|}{|K_{M }|}>0.\]

Let $\mu_q$ be a measure in the finite set
$\{\nu_{R}\colon R\in\mathscr{R}_q(\mathbf A)\}$ that maximizes the free energy within this finite set. By \eqref{obtain-eq} we have
\[
\frac{|R_{q+1}(\mathbf A)|}{|R_q(\mathbf A)|}
\leq\max_{R\in \mathscr{R}_q(\mathbf A)}\frac{|R\cap R_{q+1}(\mathbf A)|}{|R|}\leq  \exp\left(\left(F(\mu_q)+\varepsilon\right)t_q\right),\] as required in Lemma~\ref{lem-step}(a) with $i=q$.
The inequality in
  Lemma~\ref{lem-step}(b) with $i=q$ is a consequence of \eqref{hyp-300'}. 
The proof of Lemma~\ref{lem-step} is complete.
\end{proof}

\noindent{\bf Step 3: Overall estimates.}
Let $N_1$ be sufficiently large so that the conclusions of Lemma~\ref{inclusion} and \ref{lem-step} hold
with $\varepsilon$ replaced by $\varepsilon/3$. Let $n>\max\{M,(q+1)N_1\}.$
 Put \[s(N_1)=\{0\leq i\leq q\colon t_i> N_1\}.\]
  By \eqref{ti-eq} we have $n=\sum_{i=0}^q t_i$, and so $s(N_1)\neq\emptyset$. 
For each $i\in s(N_1)$,
let $\mu_i$ be a measure in $\mathcal M(f)$ 
for which the conclusion of Lemma~\ref{lem-step} holds
with $\varepsilon$ replaced by $\varepsilon/3$.
Define a measure $\mu\in\mathcal M(f)$ by
\[
\mu=\left(\sum_{i\in s(N_1)}t_i\right)^{-1} \sum_{i\in s(N_1)}t_i\cdot\mu_i.
\]
Since
$R_{i+1}(\mathbf A)\subset R_{i}(\mathbf A)$ for $0\leq i\leq q$ as in \eqref{nest-seq} and
$(1/n)\sum_{i\in s(N_1)}t_i\to1$ as $n\to\infty$,
for all sufficiently large $n$ we have
\[\begin{split}
\frac{|R_{q+1}(\mathbf A)|}{|R_0(\mathbf A)|}&=
\prod_{i=0}^q\frac{|R_{i+1}(\mathbf A)|}{|R_i(\mathbf A)|}
\leq\prod_{i\in s(N_1)}
\frac{|R_{i+1}(\mathbf A)|}{|R_i(\mathbf A)|}\leq  \exp\left(\sum_{i\in s(N_1)}\left(F(\mu_i)+\frac{\varepsilon}{2}\right)t_i\right)\\
&\leq  \exp\left(F(\mu)(n-(q+1)N_1)+\frac{\varepsilon n}{2} \right)<  \exp\left((F(\mu)+\varepsilon)n\right).
\end{split}\]
The desired inequality in 
Proposition~\ref{over-lem}(a) follows from this estimate and the first inclusion in \eqref{nest-seq}.
By \eqref{v2} we have 
         \[\begin{split}
\sum_{i\in s(N_1)}t_i\int\vec\phi d\mu_i &\geq
\sum_{i\in s(N_1)}t_i\left(\vec\alpha_{i}-\frac{1}{3}\vec\varepsilon\right)
\\
&=\sum_{i=0}^q t_i\left(\vec\alpha_{i}-\frac{1}{3}\vec\varepsilon\right)-\sum_{i\notin s(N_1)}t_i\left(\vec\alpha_{i}-\frac{1}{3}\vec\varepsilon\right)
        \\
        &>n\left(\vec\alpha-\frac{2}{3}\vec\varepsilon\right)-(q+1)N_1\max_{0\leq i\leq q}\|\vec\alpha_{i}\|\vec1.
         \end{split}\]
 This yields the desired inequality in Proposition~\ref{over-lem}(b) for all sufficiently large $n$.
The proof of Proposition~\ref{over-lem} is complete.
\end{proof}

\subsection{Proof of Proposition~\ref{up}}\label{end}
Let $f\colon X\to X$ be an $S$-unimodal map with a non-flat critical point. 
We first treat the case $m(f)\geq1$.
Let $\varepsilon>0$. Let $N>M$ be an integer for which the conclusion of Proposition~\ref{over-lem} holds with $\varepsilon$ replaced by $\varepsilon/4$.
Let $n\geq N$ satisfy
 $\cA_n(\vec{\phi},\vec{\alpha})\neq\emptyset$. From \eqref{X-decomposition} and \eqref{claimeq2} 
we have
\[
\cA_n(\vec{\phi},\vec{\alpha})\subset \bigcup_{q=0}^{M}\bigcup_{\mathbf{t}\in
I_n(q) } 
 \bigcup_{\mathbf A\in M_{q+1,\ell}(\mathbf t,\mathbb Z_\varepsilon) }R(\mathbf{t},\mathbf A).
\]

Let $0\leq q\leq M$. By Proposition~\ref{over-lem}, for every
  $\mathbf{t}\in I_n(q)$ and every $\mathbf A\in M_{q+1,\ell}(\mathbf t,\mathbb Z_\varepsilon)$ with
$R(\mathbf{t},\mathbf A)\neq\emptyset$,
there exists a measure $\mu_{\mathbf{t},\mathbf A}\in\mathcal M(f)$ 
such that 
\begin{align}\label{zero3}
\left|R(\mathbf{t},\mathbf A)\right|&\leq 
\exp\left(\left(F(\mu_{\mathbf{t},\mathbf A})+\frac{\varepsilon}{4}\right) n\right)\quad\text{and}\\
\label{zero3.5}
\int\vec\phi d\mu_{\mathbf{t},\mathbf A}&>\vec\alpha-\frac{1}{4}\vec\varepsilon.\end{align}
If $n$ is sufficiently large, then by \eqref{v3} we have 
\[
\#M_{q+1,\ell}(\mathbf{t},\mathbb Z_\varepsilon)\leq\prod_{j=1}^\ell\left(\frac{3}{\varepsilon}\left(\sup\phi_j-\inf\phi_j\right)+2\right)^{q+1}\leq \exp\left(\frac{\varepsilon n}{4}\right).
\] 
Moreover we have
$\#I_n(0)= M+1$, and for $q\geq1$,
\[
\#I_n(q)\leq\begin{pmatrix}n\\q\end{pmatrix}
\begin{pmatrix}M+1\\q+1\end{pmatrix}
\leq n^q
\begin{pmatrix}M+1\\q+1\end{pmatrix}\leq \exp\left(\frac{\varepsilon n}{4}\right).
\]

Let $\mu\in\mathcal M(f)$ be a measure in the finite set \[\bigcup_{q=0}^M\left\{\mu_{\mathbf{t},\mathbf A}\colon
\mathbf{t}\in I_n(q),\ \mathbf A\in M_{q+1,\ell}(\mathbf t,\mathbb Z_\varepsilon), \ R(\mathbf{t},\mathbf A)\neq\emptyset\right\}\]
that maximizes the free energy within this finite set. 
Combining \eqref{zero3} with the upper bound on the number of all itineraries yields
\[\begin{split}
|\cA_n(\vec{\phi},\vec{\alpha})|&\le \sum_{q=0}^{M} \sum_{\mathbf{t}\in I_n(q)   }\sum_{\mathbf A\in M_{q+1,\ell}(\mathbf t,\mathbb Z_\varepsilon)  }
\left|R(\mathbf{t},\mathbf A)\right|\\
&\leq (M+1) \exp\left(\left(F(\mu)+\frac{\varepsilon}{4}\right)n\right)\max_{0\leq q\leq M}\#I_n(q)\#M_{q+1,\ell}(\mathbf{t},\mathbb Z_\varepsilon)\\
&\leq \exp\left((F(\mu)+\varepsilon)n\right),
\end{split}\]
as required in
Proposition~\ref{up}(a) provided $n$ is sufficiently large. The inequality in Proposition~\ref{up}(b)
is a consequence of \eqref{zero3.5}.

It is left to treat the case $m(f)=0$. If $\bar m(f)=1$, then $f$ has a two-sided attracting fixed point in ${\rm int}(X)$. The desired upper bound follows from the argument of Case~2
in the proof of Lemma~\ref{lem-step}. 
If $\bar m(f)=0$, then
$f|_{L_0}\colon L_0\to L_0$ is topologically exact.
The desired upper bound follows from the argument of Case~4 in the proof of Lemma~\ref{lem-step}. The proof of Proposition~\ref{up} is complete.
\qed

\section{Bimodal maps for which the level-2 LDP does not hold}
In this last section we prove Theorem~B. We start with a post-critically finite map with two  critical points, one periodic and the other not.
After a small perturbation we obtain a map for which the level-2 LDP does not hold.

\subsection{A one-parameter family of bimodal maps}\label{bimodal}
Let $X=[0,1]$ and $K=[1/2,1]$.
Let $f\colon X\to X$ be a $C^3$ map with negative Schwarzian derivative having only two 
critical points $c_0$ and $c_1$ with $0<c_0<1/2<c_1<1$, which are assumed to be non-degenerate: $f''(c_0)f''(c_1)\neq0$.
We assume the following conditions on $f$:
\begin{itemize}
    \item[(F1)] $f(X)\subset K$ and $c_1$ is a local maximal point of $f$;
    \item[(F2)] $c_1$ is periodic with prime period $3$.
\end{itemize}
\noindent
See \textsc{Figure}~\ref{fig2}.
Condition (F2) implies that the complement of the basin of the super-attracting periodic orbit of $c_1$ in $[f^2(c_1), f(c_1)]$ is a non-trivial hyperbolic basic set $\Lambda$.  Set $p=f(c_0)$. Fix a periodic point $q\in \Lambda$.
Further we assume 
\begin{itemize}
    \item[(F3)] 
    $p$ is a periodic point in $\Lambda$, and not contained in the periodic orbit of $q$.
\end{itemize}
\begin{remark} \label{rem:f}
{\rm From (F1), we have $f^2(X)\subset [1/2, f(c_1)]$ and so $
f^3(X)\subset  [f^2(c_1),f(c_1)]$.}
\end{remark}

We consider a $C^3$ parametrized family of maps $f_a\colon X\to X$, $a\in [-1,1]$ such that $f_0=f$ and the following conditions hold:
\begin{itemize}
\item[(F4)] $f_a|_K$ does not depend on $a\in [-1,1]$;
\item[(F5)] the critical point $c_0$ of $f$ remains to be a non-degenerate critical point of $f_a$ for $a\in [-1,1]$, and $\frac{d}{da} (f_a(c_0))|_{a=0}\neq 0$.
\end{itemize}
Let $\mu_{p}$ and $\mu_{q}$ denote the uniform probability distributions on the periodic orbits of $p$ and $q$ for $f$
respectively.
Let $\chi_p$ and $\chi_q$ denote their Lyapunov exponents. From  (F4), these do not depend on the parameter $a$.
For $x\in X$, $n\in\mathbb N$ and $a\in[-1,1]$, let $\delta_{a,x}^n$
denote the uniform probability distribution on the orbit $\{f_a^{j}(x)\}_{j=0}^{n-1}$.
Theorem~B follows from the next proposition. 
\begin{figure}
\begin{overpic}[scale=0.7]{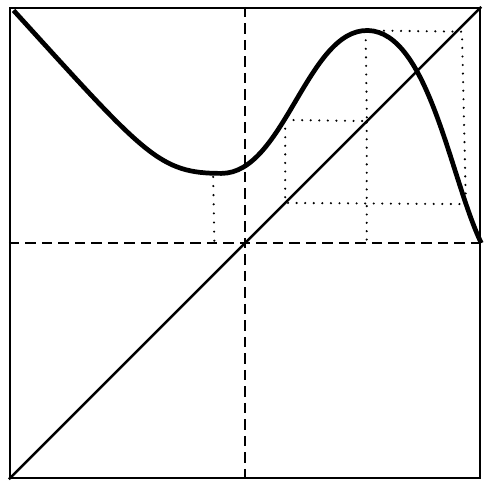}
\put(74,43){\footnotesize $c_1$}
\put(37,43){\footnotesize$c_0$}
\put(85,43){\footnotesize$
f(c_1)$}
\put(52,43){\footnotesize$
f^2(c_1)$}
\put(-2,-5){$0$}
\put(48,-5){$\frac12$}
\put(97,-5){$1$}
\put(-5,48){$\frac12$}
\put(-5,95){$1$}
\end{overpic}
\caption{The graph of the bimodal map $f_a$ in $\S$\ref{bimodal}}\label{fig2}
\end{figure}
\begin{prop}\label{abnormal}
There exists a parameter $a\in [-1,1]$ arbitrarily close to $0$ such that for any $\varepsilon>0$, 
there exist
open sets $\mathcal{G}_1$ and $\mathcal{G}_2$ in $\mathcal M$ such that:
\[
\mu_p\in \mathcal{G}_1\subset
\mathrm{cl}(\mathcal{G}_1) \subset \mathcal{G}_2;\]
\[\liminf_{n\to\infty}
\frac{1}{n}\log |\{x\in X\colon \delta_{a,x}^{n}\in \mathcal{G}_2\}|
\leq-\chi_p+\varepsilon;\]
\[\limsup_{n\to\infty}
\frac{1}{n}\log |\{x\in X\colon \delta_{a,x}^{n}\in {\rm cl}(\mathcal{G}_1)\}|
\geq-\frac{\chi_p}{2}.\]
\end{prop}

\begin{proof}[Proof of Theorem~B] Let $a\in[-1,1]$ be a parameter for which the conclusion of Proposition~\ref{abnormal} holds. 
If there were a rate function $I\colon\mathcal M\to[0,\infty]$ for $f_a$, 
we would have 
\[\begin{split}
{\limsup_{n\to\infty}}\frac{1}{n}\log|\{x\in X\colon\delta_{a,x}^n\in{\rm cl}(\mathcal G_1)\}|
&\leq -\inf_{\mathrm{cl}(\mathcal G_1)} I\le -\inf_{\mathcal G_2} I\\
&\leq
\liminf_{n\to\infty}\frac{1}{n}\log|\{x\in X\colon\delta_{a,x}^{n}\in\mathcal G_2\}|.
\end{split}\]
This contradicts the conclusion of Proposition~\ref{abnormal} for sufficiently small $\varepsilon>0$.
Therefore the level-2 LDP does not hold for 
$f_a$. \end{proof}

\subsection{Abnormal and normal large deviations rates} 
\label{perturbation}
For $\delta\in(0,1)$,
let $O_{\delta}(p)$ denote the open $\delta$-neighborhood of the periodic orbit of $p$ and put
\[
\mathcal{G}_\delta =\{ \mu\in \mathcal{M}\colon \mu(O_{\delta}(p)) >1-\delta\}.
\]
This is an open neighborhood of $\mu_p$  and we have $\mathrm{cl} (\mathcal{G}_{\delta'})\subset \mathcal{G}_\delta$ for $0<\delta'<\delta<1$.

The next two lemmas provide large deviations rates for neighborhoods of $\mu_p$.
For many $a$, 
we observe an abnormal rate as in Lemma~\ref{lm:abnormal}.
For many other $a$, 
we observe the normal rate as in Lemma~\ref{lm:normal}.
\begin{lemma}\label{lm:abnormal}
If $|a|$ is sufficiently small and   $f^m_a(c_0)=p$ for some $m\geq1$, then for any $\delta\in(0,1)$ we have
\[
\liminf_{n\to \infty}\frac{1}{n} \log |\{x\in X\colon \delta_{a,x}^n\in \mathcal{G}_\delta\}|\ge -\frac{\chi_{p}}{2}.
\]
\end{lemma}
\begin{proof}
Set $\gamma=\sqrt{|f_a''(c_0)|^{-1}\delta},$
and let $\varepsilon>0$. The condition $|x-c_0|\le \gamma e^{-n(\chi_p+\varepsilon)/2}$ 
 for $x\in X$ and sufficiently large $n\geq1$ implies $|f_a(x)-f_a(c_0)|\leq \delta e^{-n(\chi_{p}+\varepsilon)}$, and so $\delta_{a,x}^n\in \mathcal{G}_\delta$. Hence we have
\[
|\{x\in X\colon |x-c_0|<\gamma e^{-n(\chi_p+\varepsilon)/2}\}|=2\gamma e^{-(\chi_p+\varepsilon) n/2}\le 
|\{x\in X\colon\delta^n_{a,x}\in \mathcal G_\delta\}|.
\]
 This implies that
the lower limit in the lemma
is bounded from below by $-(\chi_p+\varepsilon)/2$. 
Since  $\varepsilon>0$ is arbitrary, the desired inequality follows.
\end{proof}

\begin{lemma}\label{lm:normal}
For any $\varepsilon>0$ there exists $\delta_*>0$ such that for any $\delta\in(0, \delta_*]$,
if $|a|$ is sufficiently small and  $f^m_a(c_0)=q$ for some $m\geq1$ then
\[\limsup_{n\to \infty}\frac{1}{n} \log |\{x\in X\colon \delta_{a,x}^n\in \mathcal{G}_{\delta}\}|\le -\chi_p+\varepsilon.
\]
\end{lemma}
\begin{proof}
Let $\varepsilon>0$. 
Since $f$ coincides with $f_a$ on $[f^2(c_1),f(c_1)]$, we have the large deviation estimate
\begin{equation}\label{eq:LDPonK}
\limsup_{n\to \infty}\frac{1}{n} \log |\{x\in [f^2(c_1),f(c_1)]\colon \delta_{a,x}^n\in \mathcal{G}_{\delta}\}|\leq -\chi_p+\frac{\varepsilon}{2},
\end{equation}
for $\delta>0$ small enough. This is a simplest case in the proof of Theorem~A,
and basically 
a known result.
Since  
 $f$ is locally diffeomorphic except at the two critical points, 
we see from Remark~\ref{rem:f} that  
for any open interval $U$ that contains $c_0$,
\begin{equation}\label{eq:XminusU}
\limsup_{n\to \infty}\frac1n \log |\{x\in X\setminus U\colon \delta_{a,x}^n\in \mathcal{G}_{\delta}\}|\leq -\chi_p+\frac{\varepsilon}{2}
\end{equation}
 provided $|a|$ is sufficiently small. Below we fix a small neighborhood $U$ of $c_0$, and 
consider orbits starting from $U$.

Let $\rho>0$. For an integer $k\geq0$,
let \[J_k=(q-\rho e^{-\chi_q k}, q+\rho e^{-\chi_q k}).\]
We assume $\rho$ is small enough so that $J_0\subset K$,
and the following hold for all $k\geq0$:
\begin{itemize}
\item the distortion  of $f^k$ on $J_k$ 
    is bounded by a constant independent of $k$;
    \item the Euclidean length of every connected component of $f^k(J_k\setminus J_{k+1})$ is bounded from below by a positive constant independent of $k$;
    \item $O_\delta(p)\cap\bigcup_{\ell=0}^kf^\ell(J_k)=\emptyset$.
    \end{itemize}
    Since $q$ is a hyperbolic repelling periodic point, all these conditions can be checked
by taking a $C^2$ linearization of $f$ in a neighborhood of the orbit of $q$.

Below we suppose that $|a|$ is sufficiently small and $m\ge 1$ is the smallest integer with $f^m_a(c_0)=q$. 
Recall that $f_a|_K=f|_K$.
The point $c_0$ is a non-degenerate critical point of $f^{m}_a$ because its orbit does not contain $c_1$. 
For $k\ge 0$, let $U_k$ be the connected component of $f^{-m}_a(J_k)$ that contains $c_0$. If $\rho>0$ is sufficiently small, there exists a constant $C_1>1$ depending on $m$ such that $C_1^{-1}e^{-\chi_q k/2}<|U_k|<C_1e^{-\chi_q k/2}$ for $k\ge 0$. 
The restriction of $f^m_a$ to each connected component of $U_k\setminus U_{k+1}$ for $k\geq0$ is a diffeomorphism onto a connected component of $J_k\setminus J_{k+1}$ with distortion bounded uniformly over all $k\ge 0$.

From the last property of $J_k$, if $x\in U_k$ and $m\le \ell \le m+k$ then $f^\ell_a(x)\notin O_{\delta}(p)$. Hence, if $\delta_{a,x}^n$  belongs to $\mathcal{G}_{\delta}$ for some $x\in U_{k}$ and $n\ge 0$, we have 
\begin{equation}\label{Gfinite}
m+ \#\{ \ell  \colon m+k\le \ell \le n-1, f^\ell_a(x) \in O_\delta(p)\} \ge (1-\delta) n. 
\end{equation}
Since the left-hand side is bounded by $n-k$, this happens only if $0\le k\le \delta n$.

The length of  the image $f^{m+k}_a(U_k\setminus U_{k+1})=f^k_a(J_k)$ is bounded from below by a constant independent of $k$. Hence, combining \eqref{eq:LDPonK} with the uniform distortion estimates for $f^{k}_a|_{J_k}$ and $f^m_a|_{U_k\setminus U_{k+1}}$, we conclude that
there exists a constant $C_2>1$ depending on $m$ such that
 the Lebesgue measure of the set of points in $U_k\setminus U_{k+1}$ for $0\le k\le \delta n$ satisfying  \eqref{Gfinite} 
is  bounded from above by $
C_2\exp\left((-\chi_p+\varepsilon/2)(1-\delta)n\right)$ provided $n$ is sufficiently large. 
Summing this bound
over all integers $k$ satisfying $0\leq k\leq\delta n$ yields
\[
\limsup_{n\to \infty}\frac1n \log |\{x\in U_0\colon \delta_{a,x}^n\in \mathcal{G}_{\delta}\}|\le  \left(-\chi_p+\frac{\varepsilon}{2}\right)(1-\delta)\leq
-\chi_p +\varepsilon.
\]
The last inequality holds if $\delta>0$ is
 sufficiently small. 
Since $\varepsilon>0$ is arbitrary, this together with \eqref{eq:XminusU} gives the conclusion. 
\end{proof}

\subsection{Proof of Proposition~\ref{abnormal}}\label{abnormal-pf}
Let $\varepsilon>0$. Let $\delta_*>0$ be as in Lemma~\ref{lm:normal} and let $\delta,\delta'\in(0,\delta_*)$ satisfy $\delta>\delta'$. 
Recall that the periodic points $p$, $q$ of $f_0$ are contained in its basic set $\Lambda$. From (F4) and (F5),
for $n\geq2$ we have
\begin{equation}\label{eq:transverse}
\frac{d}{da} (f_a^n(c_0))
=(f^{n-1})'(f_a(c_0)) \frac{d}{da} (f_a(c_0))\neq 0
\end{equation}
provided that $|a|$ is sufficiently small and 
$f^\ell(f_a(c_0))\neq c_1$
for every $0\leq\ell\leq n-2$. 
By the transitivity of $\Lambda$, if $|a|$ is sufficiently small and $f^m_a(c_0)=p$ (resp. $f^m_a(c_0)=q$) for some $m\ge 1$, we can find a parameter $a'$ arbitrarily close to $a$ such that $f^{m'}_{a'}(c_0)=q$ (resp. $f^{m'}_{a'}(c_0)=p$) for some $m'> m$.

We construct 
a sequence $\{a(i)\}_{i=0}^\infty$ of parameters 
in $[-1,1]$ converging to $0$, a sequence $\{\Delta(i)\}_{i=0}^\infty$ of positive reals  
converging to $0$, and strictly increasing sequences $\{m(i)\}_{i=0}^\infty$, $\{n(i)\}_{i=0}^\infty$ of positive integers
with the following properties:
\begin{itemize}
    \item[(a)] $f_{a(i)}^{m(i)}(c_0)=p$ for $i$ even, and
    $f_{a(i)}^{m(i)}(c_0)=q$ for $i$ odd;
    \item[(b)] if $a\in[-1,1]$ and $|a-a(i)|\le \Delta(i)$ then
    \[\begin{split}
    &\frac{1}{n(i)} \log |\{x\in X\colon \delta_{a,x}^{n(i)}\in \mathcal{G}_{\delta'}\}|> -\frac{\chi_p}{2}-\frac{1}{i}\quad \text{for $i$ even, }\\
    &\frac{1}{n(i)} \log |\{x\in X\colon \delta_{a,x}^{n(i)}\in \mathcal{G}_{\delta}\}|< -\chi_p+\varepsilon
    \ \quad \text{ for $i$ odd;}
\end{split}\]
        \item[(c)] for all integers $i$, $i'$ with $0\leq i'< i$,
    $|a(i')-a(i)|<\Delta(i')$. 
\end{itemize}
Then, Proposition~\ref{abnormal} holds for $f_a$ with  $a=\lim_{i\to \infty}a(i)$, 
$\mathcal{G}_1=\mathcal{G}_{\delta'}$ and 
$\mathcal{G}_2=\mathcal{G}_{\delta}$.

The construction is inductive. 
Start with $a(0)=0$, $m(0)=n(0)=1$ and 
a small number $\Delta(0)>0$ such that $\frac{d}{da} (f_a(c_0))\neq 0$ when $|a|<\Delta(0)$. Let $j\ge 1$ and suppose  
$a(i)$, $m(i)$, $n(i)$, $\Delta(i)$ have been constructed for $i=0,\ldots,j-1$.
As in the remark after \eqref{eq:transverse},  we 
take a parameter 
$a(j)$ close to $a(j-1)$ and an integer 
$m(j)>m(j-1)$ so that (a) and (c) hold for $i=j$. Further, we 
take a large integer $n(j)>n(j-1)$ so that one of the two conditions in 
(b) holds for $i=j$ and $a=a(j)$,  by Lemmas~\ref{lm:abnormal} or \ref{lm:normal} depending on the parity of $j$. Finally we take sufficiently small 
$\Delta(j)>0$ so that one of the two conditions in (b) holds for any $a\in[-1,1]$ with $|a-a(j)|<\Delta(j)$. 
Iterating this construction yields 
the four sequences 
with the required properties. 
\qed
\medskip

\begin{remark}\label{remark-final} 
The counterexample in Theorem~B can be found in  polynomial maps of degree $3$, by considering an appropriate one-parameter family $(f_a)$ of polynomials of degree $3$ and applying the argument parallel to the proof of 
Proposition~\ref{abnormal} above. Below we explain the construction of the one-parameter family $(f_a)$, leaving the details to interested readers. 

First we take a polynomial $f$ of degree $3$ so that there exist 
two non-degenerate critical points $c_0<c_1$ and non-empty compact intervals $K$ and $X$ with $K\subsetneq X$, $c_0\in X\setminus K$ and $c_1\in K$, and
a non-trivial hyperbolic basic set $\Lambda\subset K$, and a hyperbolic repelling periodic point $q\in \Lambda$ for which 
(F1), (F2), (F3) hold with $p=f(c_0)$. This is possible by the combinatorial argument on the dynamics of continuous piecewise monotone maps \cite[Chapter~II, $\S$4-6]{dMevSt93}.  

Second we consider a one-parameter family $(f_a)$ of polynomials of degree $3$ with $f_0=f$. 
Clearly (F4) is not true in general, but this is not essential as we put (F4) just to make computations easier. 
For $a\in\mathbb R$ sufficiently close to $0$,
let $c_0(a)<c_1(a)$ denote the critical points of $f_a$ and let $p(a)$
denote the continuation of the hyperoblic periodic point $p$ of $f_0$ respectively.
Instead of (F5) we require 
\begin{equation}
\label{eq:transversal}
\frac{d}{da} (f_a(c_0(a))-p(a))\big|_{a=0}\neq 0.
\end{equation}
In order to verify \eqref{eq:transversal},
we exploit the theory of Douady-Hubbard-Thurston on the dynamics of complex polynomials.
Consider the two-parameter family
\[
f_{\alpha,\beta}(x)=f(x)+\alpha x+\beta,
\]
and regard it as a family of complex dynamical systems with complex parameters $(\alpha,\beta)\in \mathbb{C}^2$. We apply \cite[Main~Theorem~1.1]{vStrien00} to this family, restricting $(\alpha,\beta)$ to a small neighborhood $V$ of $(0,0)$ in $\mathbb C^2$. To this end, we have to check that the family $(f_{\alpha,\beta})$ is a \textit{normalized family} (see \cite{vStrien00}), that is, no affine transformation conjugates two maps in the family $(f_{\alpha,\beta})$ with different parameters in $V$. This is not difficult: compute  $L^{-1}\circ f_{\alpha,\beta}\circ L$ for an affine map $L(z)=Az+B$ and use the fact that the coefficients of $f_{\alpha,\beta}$ with degree $2$ and $3$ do not depend on the parameter $(\alpha,\beta)$. As a conclusion, we obtain that 
the differential of the map
\[
\Psi\colon (\alpha,\beta)\in V \mapsto (f_{\alpha,\beta}(c_0(\alpha,\beta))- p(\alpha,\beta), f^3_{\alpha,\beta}(c_1(\alpha,\beta))-c_1(\alpha,\beta))\in \mathbb{C}^2
\]
at $(0,0)$ is injective, where $c_0(\alpha,\beta)$, $c_1(\alpha,\beta)$  denote 
the two non-degenerate critical points of $f_{\alpha,\beta}$ and
$p(\alpha,\beta)$ denotes the hyperbolic periodic point of $f_{\alpha,\beta}$ that are continuations of
$c_0$, $c_1$ and $p$ for $f_{0,0}$ respectively.
Clearly one can restrict $\Psi$ to $V\cap \mathbb{R}^2$ and see that $D\Psi_{(0,0)}\colon\mathbb{R}^2\to \mathbb{R}^2$ is injective. 
For all $a\in\mathbb R$ sufficiently close to $0$,  define $f_a=f_{\alpha(a), \beta(a)}$ by $\Psi(\alpha(a),\beta(a))=(a,0)$. Then 
\eqref{eq:transversal} holds for the family $(f_a)$.  

For the family $f_a$ constructed as above, we have   
\[
\frac{d}{da} (f^n_a(c_0(a))-f^{n-1}_a(p(a)))\big|_{a=0}= (f^{n-1}_0)'(f_0(c_0)) \cdot \frac{d}{da} (f_a(c_0(a))-p(a))\big|_{a=0}.
\]
Since $\frac{d}{da} f^{n-1}_a(p(a))\big|_{a=0}$ is uniformly bounded over all $n\geq1$,  
 \eqref{eq:transversal} implies that  $\frac{d}{da} f^n_a(c_0(a))\big|_{a=0}$ is comparable with $(f_0^{n-1})'(f_0(c_0))$ and grows exponentially in $n$. With this estimate and other conditions on $f=f_{0}$ discussed above, we can follow the argument in  
the proof of Proposition~\ref{abnormal} to obtain the counterexample in 
Theorem~B in the family $(f_a)$.  
\end{remark}

\subsection*{Acknowledgments}
We thank the referee for his or her very careful reading of the earlier versions of the manuscript and giving suggestions for improvements, which were indispensable for the completion of this paper. 
  We thank Juan Rivera-Letelier for  enlightening discussions throughout this project.
  We also thank Masayuki Asaoka, Miki U. Kobayashi for fruitful discussions.
  This paper was based on the authors' discussion  during their visits to the CIRM Luminy (Marseille) for participation 
in the conference `Dynamics Beyond Uniform Hyperbolicity' in May 2019.
HT was supported by the 
JSPS KAKENHI 25K21999, Grant-in-Aid for Challenging Research (Exploratory).
MT was supported by the JSPS KAKENHI 23K20806, Grant-in-Aid for Scientific Research (B).

\end{document}